 \numberwithin{equation}{section}
\newtheorem{theo}{Theorem}[section]
\newtheorem{lemma}[theo]{Lemma}
\newtheorem{corol}[theo]{Corollary}
\newtheorem{prop}[theo]{Proposition}
\theoremstyle{definition}
\newtheorem{remark}[theo]{Remark}
\newtheorem{example}[theo]{Example}
\newtheorem{defi}[theo]{Definition}
\newcommand{\Real}{\mathbb R}
\newcommand{\Complex}{\mathbb C}
\newcommand{\Kapa}{\mathbb K}
\newcommand{\Nat}{\mathbb N}
\newcommand{\norm}{\|\cdot\|}
\newcommand{\Lra}{\Longrightarrow}
\newcommand{\Ra}{\Longrightarrow}
\newcommand{\lra}{\longrightarrow}
\newcommand{\sms}{\smallsetminus}
\newcommand{\sse}{\subseteq}
\newcommand{\rd}{\mathcal}
\newcommand{\inter}{\operatorname{int}}
\newcommand{\clos}{\operatorname{cl}} %
\newcommand{\conv}{\operatorname{co}}
\newcommand{\Lip}{\operatorname{Lip}} %
\newcommand{\vphi}{\varphi}
\newcommand{\epsic}{\varepsilon}
\newcommand{\mbx}{\mbox}
\newcommand{\ba}{\begin{array}}
\newcommand{\ea}{\end{array}}
\newcommand{\ben}{\begin{enumerate}}
\newcommand{\een}{\end{enumerate}}
\newcommand{\eite}{\end{itemize}}
\newcommand{\bite}{\begin{itemize}}
\newcommand{\bequ}{\begin{equation}} %
\newcommand{\eequ}{\end{equation}} %
\newcommand{\bequs}{\begin{equation*}} %
\newcommand{\eequs}{\end{equation*}} %
\newcommand{\beqs}{\begin{equation*}} %
\newcommand{\eeqs}{\end{equation*}} %
\newcommand{\bc}{\begin{center}}
\newcommand{\ec}{\end{center}}
\newcommand{\bfr}{\begin{flushright}}
\newcommand{\efr}{\end{flushright}}
\newcommand{\lbda}{\lambda}
\newcommand{\Lbda}{\Lambda}
\begin{document}

\title{b-metric   spaces,  fixed points and Lipschitz functions}
\author{S. Cobza\c{s} }
\address{S. Cobza\c{s},  Babe\c{s}-Bolyai University,
Department of Mathematics,
Cluj-Napoca, Romania}
\email{scobzas@math.ubbcluj.ro}

\date{\today}
\begin{abstract}
  The paper is concerned with b-metric and generalized b-metric spaces. One proves the existence of the completion of a generalized b-metric space and some fixed point results. The behavior of Lipschitz functions on b-metric spaces of homogeneous type, as well as of Lipschitz functions defined on,  or with values in  quasi-Banach spaces, is studied. \medskip

 MSC2010: 54E25 54E35 47H09 47H10 46A16 26A16

  \textbf{Keywords:} metric space, generalized metric space, b-metric space, completion, metrizability, fixed point, quasi-Banach space, Lipschitz mapping
  \end{abstract}

\maketitle
\tableofcontents
\section*{Introduction}

There are a lot of extensions of the notions of metric and metric space -- see for instance the books \cite{Deza}, \cite{Kirk-Shah}, \cite{Rus-PP},  or the survey papers \cite{beri-chob13}, \cite{khamsi15}. In this   paper  we concentrate  on b-metric and generalized b-metric spaces, with emphasis on their topological properties, some fixed point results and Lipschitz functions on such spaces.

A part of the results from this paper are included  in \cite{cobz-czerw18}.

\section{b-metric spaces}

In this section we  present some results on b-metric spaces.
\subsection{Topological properties and metrizability}\label{Ss.bms-top}
 A \emph{b-metric} on a nonempty set $X$ is a function $d:X\times X\to[0,\infty)$ satisfying the conditions
 \begin{equation}\label{def.b-metric}
 \begin{aligned}
   {\rm (i) } \quad &d(x,y) = 0 \iff x=y;\\
    {\rm (ii) } \quad &d(x,y) = d(y,x);\\
   {\rm (iii) } \quad &d(x,y) \le s[d(x,z) + d(z,y)],
\end{aligned}
\end{equation}
for all $x,y,z\in X,$  and for some fixed number $s\ge 1.$ The pair $(X,d)$ is called a b-\emph{metric space}. Obviously, for $s=1$ one obtains a metric on $X$.

\begin{example}
If  $(X,d)$ is a  metric space and  $\beta >1$, then $d^\beta(x,y)$ is a b-metric.
\end{example}
Indeed,
\begin{align*}
 d^\beta(x,y)&\le [d(x,z)+d(z,y)]^\beta\le 2^\beta \left(\max\{d(x,z),d(z,y)\}\right)^\beta\\&\le 2^\beta[d^\beta(x,y)+d^\beta(x,y)]\,.
\end{align*}

The $s$-relaxed triangle inequality implies
\begin{equation}\label{s-relax-n}
d(x_0,x_n)\le sd(x_0,x_1)+s^2d(x_1,x_2)+\dots+s^{n-1}d(x_{n-2},x_{n-1})+s^{n-1}d(x_{n-1},x_n)\,,
\end{equation}
for all $n\in\Nat$ and all $x_0,x_1,\dots,x_n\in X$.

Indeed, we obtain successively
\begin{align*}
  d(x_0,x_n)&\le sd(x_{0},x_1)+sd(x_{1},x_n)\le sd(x_{0},x_1)+s^2d(x_{1},x_2)+s^2d(x_2,x_n)\le \dots\\
 &\le sd(x_0,x_1)+s^2d(x_1,x_2)+\dots+s^{n-1}d(x_{n-2},x_{n-1})+s^{n-1}d(x_{n-1},x_n)
\end{align*}

Along with the inequality (iii), called the $s$-\emph{relaxed triangle inequality}, one considers also the $s$-\emph{relaxed polygonal inequality}

\begin{equation}\label{def.polyg}\tag{{\rm iv}}
  d(x_0,x_n) \le s[d(x_0,x_1) + d(x_1,x_2)+\dots+d(x_{n-1},x_n)],
\end{equation}
for all $x_0,x_1,\dots,x_n\in X$ and all $n\in\Nat.$

For $n=2$ one obtains the inequality (iii). The following example shows that the converse is not true -- there exist b-metrics that do not satisfy the relaxed polygonal inequality.

\begin{example}[\cite{Kirk-Shah}, Theorem 12.10]\label{ex.polyg}  Let $X=[0,1] $ and   $d(x,y)=(x-y)^2,\, x,y\in [0,1].$ Then $d$ is a 2-relaxed metric on $X$ which is not polygonally $s$-relaxed for any $s\ge 1.$
     \end{example}

     Indeed, it is easy to check that $d$ satisfies the 2-relaxed triangle inequality. Suppose that, for some $s\ge 1,$  $d$ satisfies the  $s$-relaxed polygonal  inequality. Taking $x_i=\frac in,\, 1\le i\le n-1,$ we obtain
     $$
    \frac1s= \frac 1s\cdot d(0,1)\le  d(0,x_1)+d(x_1,x_2)+\dots+d(x_{n-1},1)=n\cdot\left(\frac1n\right)^2=\frac1n\,,$$
     for all $n\in \Nat,$ which is impossible.

One can consider also an ultrametric version of (iii):
\begin{equation}\label{b-ultra}\tag{{\rm iii$'$}}
  d(x,y)\le\lbda\max\{d(x,z),d(y,z)\}\,,
\end{equation}
for all $x,y,z\in X$. It is obvious that
\begin{align*}
  \mbx{(iii$'$)}\;&\Lra\;   \mbx{(iii) \;\, with }\; s=\lbda;\\
\mbx{(iii)}\;&\Lra\; \mbx{(iii$'$) \; with }\; \lbda=2s.
 \end{align*}

The  condition
\begin{equation}\label{b-metric-eps}\tag{{\rm iii$''$}}
  \max\{d(x,z),d(y,z)\}\le\epsic \;\Lra\;  d(x,y)\le 2\epsic\,,
\end{equation}
for all $\epsic >0$ and $x,y,z\in X$,  is equivalent to \eqref{b-ultra} with  $\lbda =2$.

Let now $(X,d)$ be again a b-metric space.
One introduces a topology on a b-metric space $(X,d)$ in the usual way. The ``open" ball $B(x,r)$ of center $x\in X$ and radius $r>0$ is given by
\begin{equation*}
B(x,r)=\{y\in X : d(x,y)<r\}\,.
\end{equation*}

A subset $Y$ of $X$ is called open if for every $x\in Y$ there exists a number $r_x>0$ such that $B(x,r_x)\sse Y.$ Denoting by $\tau_d$ (or  $\tau(d)$) the family of all open  subsets of $X$ it follows that $\tau_d$ satisfies the axioms of a topology. This topology is derived from a uniformity $\rd U_d$ on $X$ having as basis the sets
$$
U_\epsic=\{(x,y)\in X\times X  : d(x,y)<\epsic\},\quad \epsic >0\,.$$

The  uniformity $\rd U_d$ has a countable basis $\{U_{1/n} : n\in\Nat\}$ so that, by Frink's metrization theorem (\cite{frink37}),  the uniformity $\rd U_d$ is derived from a metric $\rho$, hence  the topology $\tau_d$ as well. This was remarked in the paper
\cite{maci-sego79a}. In \cite{fagin03} it is shown that the topology $\tau_d$ satisfies the hypotheses of the Nagata-Smirnov metrizability theorem.

Concerning  the metrizability of uniform and topological spaces, see the treatise \cite{Engel}.

There exist  also  direct proofs of the metrizability of the topology of a b-metric space.

Let $(X,d)$ be a b-metric space. Put
\begin{equation}\label{def.1-metric}
\rho(x,y)=\inf\Big\{\sum_{k=0}^nd(x_{i-1},x_i)  \Big\}\,,\end{equation}
where the infimum is taken over all $n\in\Nat$ and all chains $x=x_0,x_1,\dots, x_n=y$ of elements in $X$ connecting $x$ and $y$.

As remarked Frink \cite{frink37}, if a b-metric $d$ satisfies \eqref{b-ultra} for $\lambda =2$, then   formula  \eqref{def.1-metric} defines a metric  equivalent to $d$.  We present the result in the form given by Schroeder \cite{schroed06}.

\begin{theo}[A.~H. Frink \cite{frink37} and V.~Schroeder \cite{schroed06}] \label{t.Frink} If $d:X\times X\to [0,\infty)$ satisfies the conditions {\rm(i), (ii)} from \eqref{def.b-metric} and \eqref{b-ultra} for some $1\le\lbda \le 2,$ then the
  function $\rho$ defined by \eqref{def.1-metric}  is a metric on $X$ satisfying the inequalities $\frac1{2\lbda} d\le \rho\le d$.
\end{theo}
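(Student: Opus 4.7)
The plan proceeds in three steps corresponding to the three assertions of the theorem: $\rho \le d$, the metric axioms for $\rho$, and the lower bound $d \le 2\lbda \, \rho$.

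For the upper bound $\rho \le d$, I take the trivial chain $x_0=x$, $x_1=y$ in the infimum \eqref{def.1-metric}; its sum equals $d(x,y)$, so $\rho(x,y) \le d(x,y)$. Symmetry of $\rho$ follows from reversing chains, and the triangle inequality for $\rho$ is the standard concatenation argument: given $\eps > 0$, paste together near-optimal chains from $x$ to $z$ and from $z$ to $y$ to witness $\rho(x,y) \le \rho(x,z) + \rho(z,y) + \eps$. Positivity of $\rho$ when $x \ne y$ I defer until the chain estimate below is proved.

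The crux is the chain estimate
\bequs
d(x_0,x_n) \;\le\; 2\lbda \sum_{i=1}^n d(x_{i-1},x_i),
\eequs
which I propose to prove by strong induction on $n$. The base $n=1$ is immediate. For the inductive step, write $S = \sum_{i=1}^n d(x_{i-1},x_i)$ and $S_k = \sum_{i=1}^k d(x_{i-1},x_i)$, and let $p$ be the largest index with $S_p \le S/2$. Then $S_{p+1} > S/2$, hence $T_{p+1} := S - S_{p+1} < S/2$, while trivially $d(x_p,x_{p+1}) \le S$. The inductive hypothesis applied to the two strict subchains $x_0,\dots,x_p$ and $x_{p+1},\dots,x_n$ yields $d(x_0,x_p) \le 2\lbda S_p \le \lbda S$ and $d(x_{p+1},x_n) \le 2\lbda T_{p+1} < \lbda S$. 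Combining these three pieces via the ultrametric-type inequality \eqref{b-ultra}, using $x_p$ or $x_{p+1}$ as an intermediate point, should then yield the bound $2\lbda S$. The restriction $1 \le \lbda \le 2$ enters precisely here, as this is what is needed to absorb the cross terms.

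Once the chain estimate is in place, $\rho(x,y) \ge d(x,y)/(2\lbda)$ follows by taking the infimum over all chains, and then the positivity postponed from Step 2 is immediate. The main obstacle I anticipate is precisely the recombination step: a naive application of \eqref{b-ultra} through either $x_p$ or $x_{p+1}$ tends to inflate the constant to roughly $\lbda^3 S$, which only recovers the claimed bound when $\lbda \le \sqrt{2}$. Extracting the sharper constant $2\lbda$ uniformly on $[1,2]$ appears to require a finer case analysis — for instance distinguishing according to which of the three quantities $d(x_0,x_p)$, $d(x_p,x_{p+1})$, $d(x_{p+1},x_n)$ dominates — so that \eqref{b-ultra} can be invoked through the optimal intermediate point rather than via one blanket estimate.
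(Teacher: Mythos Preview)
The paper does not supply its own proof of Theorem~\ref{t.Frink}; it is stated as a cited result attributed to Frink and Schroeder, so there is no in-paper argument to compare against. I therefore assess your proposal on its own merits.

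Your overall plan is the classical Frink route: reduce everything to the chain estimate $d(x_0,x_n)\le 2\lbda\, S$ and prove that by strong induction via bisection at the index $p$ where the partial sums cross $S/2$. The upper bound $\rho\le d$, symmetry, and the triangle inequality for $\rho$ are fine as you describe them, and deferring positivity to the chain estimate is the right call. The difficulty is exactly where you flag it: the recombination of the three pieces $A=d(x_0,x_p)$, $B=d(x_p,x_{p+1})$, $C=d(x_{p+1},x_n)$.

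The gap is that you do not actually carry out the ``finer case analysis'' you propose, and a case split by which of $A,B,C$ dominates does \emph{not} close it. With the inductive bounds $A\le\lbda S$, $B\le S$, $C<\lbda S$, two applications of \eqref{b-ultra} --- through $x_p$ then $x_{p+1}$, or the reverse --- give
\[
d(x_0,x_n)\le \lbda\max\{A,\lbda B,\lbda C\}\quad\text{and}\quad d(x_0,x_n)\le \lbda\max\{\lbda A,\lbda B,C\},
\]
and in the worst case both are $\lbda^3 S$. If $B$ is the maximum you do get $\lbda^2 S\le 2\lbda S$, but when $A$ (or symmetrically $C$) is the maximum, the best you extract from either route is $\lbda^2 A\le\lbda^3 S$, and no choice of intermediate point improves this. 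In effect your bisection closes only when the induced three-term constant $\lbda^2$ is at most $2$, i.e.\ for $\lbda\le\sqrt{2}$; for $\sqrt{2}<\lbda\le 2$ the scheme as written fails, and the sentence ``so that \eqref{b-ultra} can be invoked through the optimal intermediate point'' is a hope, not an argument.

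To repair this you should consult Schroeder's actual proof in \cite{schroed06} (or Frink's original in the uniformity language), where the recombination step is handled more delicately than a single two-fold application of \eqref{b-ultra} to the three-piece decomposition.
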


V.~Schroeder \cite{schroed06}  also  showed that for every $\epsic>0$ there exists a b-metric $d$ satisfying \eqref{def.b-metric}.(iii) with $s=1+\epsic$ such that the mapping $\rho$ defined  by \eqref{def.1-metric}   is not a metric. Other example showing the limits of Frink's metrization method was given in  {An} and   {Dung} \cite{an-dung15b}.

General results of   metrizability  were obtained in \cite{aimar98} and \cite{stempak09} by a slight modification of Frink's technique.

Let $(X,d)$ be a b-metric space.  For $0<p\le 1$ define
\begin{equation}\label{def.p-metric}
\rho_p(x,y)=\inf\Big\{\sum_{k=0}^nd(x_{i-1},x_i)^p \Big\}\,,\end{equation}
where the infimum is taken over all $n\in\Nat$ and all chains $x=x_0,x_1,\dots, x_n=y$ of elements in $X$.

The function $ \rho_p$ defined by \eqref{def.p-metric} satisfies the conditions
 \begin{enumerate}
 \item $\rho_p(x,y)=\rho_p(y,x),$
 \item  $\rho_p(x,y)\le\rho_p(x,z)+\rho_p(z,y),$
 \item  $d^p(x,y)\le\rho_p(x,y)$\,,
\end{enumerate}
 for all $x,y,z\in X$, i.e., $\rho$ is a pseudometric on $X$ and $d^p$ is dominated by $\rho$.

  \begin{theo}[\cite{stempak09}]\label{t.Stemp} Let  $d$ be a b-metric on a nonempty set $X$ satisfying the  $s$-relaxed triangle  inequality \eqref{def.b-metric}.(iii), for some   $s\ge 1.$ If the number $p\in (0,1]$ is given by the equation $(2s)^p=2$, then the mapping $\rho_p:X\times X \to [0,\infty)$ defined by \eqref{def.p-metric} is a metric on $X$ satisfying the   inequalities
\begin{equation}\label{ineq1.Stemp}
  \rho_p(x,y)\le d^p(x,y)\le 2  \rho_p(x,y)\,,
  \end{equation}
  for all $x,y\in X$.

  The same conclusions hold if $d$ satisfies   the conditions (i), (ii) from \eqref{def.b-metric} and \eqref{b-ultra} for some $\lbda \ge 2.$ In this case  $0<p\le 1$ is given by $\lbda^p=2$ and the metric $\rho_p$ satisfies the inequalities
  \begin{equation}\label{ineq2.Stemp}
  \rho_p(x,y)\le d^p(x,y)\le 4  \rho_p(x,y)\,,
  \end{equation}
  for all $x,y\in X$.
\end{theo}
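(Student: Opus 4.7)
The easy inequality $\rho_p(x,y)\le d^p(x,y)$ is immediate from the one-edge chain $x_0=x,\,x_1=y$. Symmetry of $\rho_p$ is inherited from $d$, and the inequality $\rho_p(x,y)\le\rho_p(x,z)+\rho_p(z,y)$ follows by concatenating near-optimal chains, so $\rho_p$ is automatically a pseudometric. Once the reverse bound $d^p\le C\rho_p$ (with $C=2$ or $C=4$) is in hand, $\rho_p(x,y)>0$ whenever $x\ne y$, so $\rho_p$ is actually a metric.

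\emph{Second case.} Raising the ultrametric inequality \eqref{b-ultra} to the $p$th power and using $\lambda^p=2$ shows that $d^p$ itself satisfies \eqref{b-ultra} with constant $2$. Applying Frink's Theorem~\ref{t.Frink} to $d^p$ in place of $d$ then yields $\tfrac14 d^p\le\rho_p\le d^p$, which is \eqref{ineq2.Stemp}.

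\emph{First case.} The preparatory observation is that subadditivity of $t\mapsto t^p$ for $p\in(0,1]$, combined with the $s$-relaxed triangle inequality and $(2s)^p=2$ (so $s^p=2^{1-p}$), gives
\[
d^p(x,y)\le s^p\bigl(d(x,z)+d(z,y)\bigr)^p\le s^p\bigl(d^p(x,z)+d^p(z,y)\bigr)=2^{1-p}\bigl(d^p(x,z)+d^p(z,y)\bigr).
\]
The goal is then the sharper chain bound
\[
d^p(x_0,x_n)\le 2\sum_{i=1}^n d^p(x_{i-1},x_i),
\]
from which $d^p\le 2\rho_p$ follows by passing to the infimum. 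I would argue by induction on $n$, the case $n=1$ being trivial. For $n\ge 2$, writing $T$ for the total sum on the right, split the chain at the index $k$ where the cumulative partial sum $t_k:=\sum_{i=1}^k d^p(x_{i-1},x_i)$ first crosses $T/2$, apply the induction hypothesis to the subchains $x_0,\dots,x_k$ and $x_{k+1},\dots,x_n$ to control $d^p(x_0,x_k)$ and $d^p(x_{k+1},x_n)$, and recombine via the iterated $s$-relaxed inequality, absorbing the constants through $(2s)^p=2$.

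\textbf{Main obstacle.} The sharp constant $2$ in the first case is the genuinely delicate point. Merely passing to the ultrametric consequence $d^p(x,y)\le 2\max\{d^p(x,z),d^p(z,y)\}$ and invoking Theorem~\ref{t.Frink} would only recover the weaker constant $4$, exactly as in the second case. Improving from $4$ to $2$ requires a coordinated use of the additive $s$-relaxed form together with the subadditivity of $t\mapsto t^p$; in particular, a naive two-piece split of the chain gives constants like $s^p(1{+}2s)^p$ or $2\cdot 4^{1-p}+2^{1-p}$, both strictly larger than $2$ in general. A more careful choice of splitting index, so that the recursive estimates reassemble via $(2s)^p=2$ without loss, is what is genuinely needed; the precise combinatorial bookkeeping is the technical content imported from \cite{stempak09}.
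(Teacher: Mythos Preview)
The paper does not supply a proof of Theorem~\ref{t.Stemp}; it is quoted from \cite{stempak09}, with only the preliminary pseudometric properties of $\rho_p$ verified beforehand. So there is no in-paper argument to compare your proposal against.

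That said, your outline is sound where it is complete. The reduction of the second case to Theorem~\ref{t.Frink}, by observing that $d^p$ satisfies \eqref{b-ultra} with constant $\lambda^p=2$ and that the chain functional \eqref{def.1-metric} built from $d^p$ is exactly $\rho_p$, is correct and delivers precisely the constant $4$ of \eqref{ineq2.Stemp}. For the first case you have correctly isolated the genuine difficulty: passing to the ultrametric consequence and invoking Theorem~\ref{t.Frink} again yields only $4$, and the naive three-piece split you describe, recombined through $d^p(x,y)\le 2^{1-p}\bigl(d^p(x,z)+d^p(z,y)\bigr)$, produces a constant of the form $2\cdot 2^{2(1-p)}$, which exceeds $2$ for $p<1$. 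Reaching the sharp constant $2$ does require the more delicate bookkeeping of \cite{stempak09}, and since you explicitly defer to that reference for those details, your proposal for the first case is, in effect, at the same level of completeness as the paper itself: the strategy is identified, the obstacle is named, and the resolution is outsourced to the original source.
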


The inequalities \eqref{ineq1.Stemp} have the following consequences.
\begin{corol}  Under the hypotheses of Theorem \ref{t.Stemp},   $\tau_d=\tau_\rho,$ that is, the topology of any b-metric space is metrizable, and
the convergence of sequences with respect to  $\tau_d$ is characterized in the following way:
\begin{equation*}
x_n\xrightarrow{\tau_d} x \iff d(x,x_n)\lra 0 \,,
\end{equation*}
for any sequence $(x_n)$ in $X$ and  $x\in X$.
\end{corol}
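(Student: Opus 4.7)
The plan is to deduce everything directly from the two-sided comparison \eqref{ineq1.Stemp} (or \eqref{ineq2.Stemp} in the ultrametric-type setting), which says $\rho_p\le d^p\le 2\rho_p$. Because $p>0$ is fixed, the maps $t\mapsto t^p$ and $t\mapsto t^{1/p}$ are strictly increasing homeomorphisms of $[0,\infty)$, so these inequalities will let me compare $d$-neighborhoods with $\rho_p$-neighborhoods in both directions.

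First I would prove $\tau_d\subseteq\tau_{\rho_p}$. Take $U\in\tau_d$ and $x\in U$; pick $r>0$ with $B_d(x,r)\sse U$. Using $d^p\le 2\rho_p$, whenever $\rho_p(x,y)<r^p/2$ one gets $d^p(x,y)<r^p$, hence $d(x,y)<r$. Therefore $B_{\rho_p}(x,r^p/2)\sse B_d(x,r)\sse U$, which shows $U$ is $\rho_p$-open. For the reverse inclusion $\tau_{\rho_p}\sse\tau_d$, I would take $V\in\tau_{\rho_p}$, $x\in V$ and $r>0$ with $B_{\rho_p}(x,r)\sse V$, and use $\rho_p\le d^p$: if $d(x,y)<r^{1/p}$, then $d^p(x,y)<r$ and so $\rho_p(x,y)<r$, giving $B_d(x,r^{1/p})\sse B_{\rho_p}(x,r)\sse V$, so $V\in\tau_d$.

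For the sequential characterization, I would first observe that $\tau_{\rho_p}$ is a genuine metric topology, so $x_n\xrightarrow{\tau_{\rho_p}} x$ iff $\rho_p(x,x_n)\to 0$. The sandwich $\rho_p(x,x_n)\le d^p(x,x_n)\le 2\rho_p(x,x_n)$ shows that $\rho_p(x,x_n)\to 0$ iff $d^p(x,x_n)\to 0$, which is equivalent to $d(x,x_n)\to 0$ since $p>0$. Combining this with the equality $\tau_d=\tau_{\rho_p}$ obtained in the previous step yields the claimed equivalence. The same argument, using \eqref{ineq2.Stemp} in place of \eqref{ineq1.Stemp}, covers the ultrametric-type hypothesis.

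There is no real obstacle here: the heavy lifting was carried out in Theorem \ref{t.Stemp}, and the corollary is a clean transfer of its inequalities to the level of neighborhoods. The only point that requires a bit of care is that in a b-metric space the balls $B_d(x,r)$ need not themselves be open, but this is irrelevant for the present argument because the topology $\tau_d$ is defined exactly by the condition that every point of an open set contain some $d$-ball — and that is what the estimates above directly manipulate.
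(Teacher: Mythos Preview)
Your proposal is correct and follows essentially the same route as the paper: both arguments derive the ball inclusions $B_d(x,r^{1/p})\sse B_{\rho_p}(x,r)$ and $B_{\rho_p}(x,c^{-1}r^p)\sse B_d(x,r)$ directly from the sandwich $\rho_p\le d^p\le c\,\rho_p$, use them to conclude $\tau_d=\tau_{\rho_p}$, and then read the sequential characterization off the same inequalities. Your version is simply more explicit about unpacking the definition of $\tau_d$ and about why the possible non-openness of $d$-balls is irrelevant, but the underlying idea is identical.
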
\begin{proof}
 The equality of topologies follows from the inclusions
 $$
 B_d(x,r^{1/p})\sse B_\rho(x,r)\;\mbx{ and }\; B_\rho\big(x, 4^{-1}r^p\big)\sse B_d(x,r)\,,$$
valid for all $x\in X$ and $r>0$.

The statement concerning sequences is a consequence of the equality $\tau_d=\tau_\rho$ and of the inequalities \eqref{ineq1.Stemp}.
\end{proof}

\begin{remark}
  In \cite{aimar98}  the proof is given for a $p$ satisfying the inequalities
  \begin{equation}\label{ineq1.Aimar}
  1\ge p\ge \left(\log_2(3s)\right)^{-2},\end{equation}
  while
  in Theorem \ref{t.Stemp} the  result holds for
  \begin{equation}\label{ineq2.Aimar}
  p= \left(\log_2(2s)\right)^{-1}.\end{equation}
  Putting
  $$
  \tilde \rho(p)=\sup\{p\in(0,1] : \rho_p\;\mbox{is a metric, Lipschitz equivalent to}\; d^p\}\,,$$
the estimation \ref{ineq1.Aimar}   yields $\tilde\rho(p)\ge  \left(\log_2(3s)\right)^{-2}$, while from \ref{ineq2.Aimar} one obtains the better evaluation
$\tilde\rho(p)\ge  \left(\log_2(2s)\right)^{-1}$, which cannot be improved,  as it is shown by the example of the spaces $\ell^p$ with $0<p<1$.

  A proof of Theorem \ref{t.Stemp} is also given in the book by Heinonen \cite[Prop. 14.5]{Heinonen}, with the evaluation  $\,p\ge (\log_2\lbda)^{-2},$ where $\lbda$ is the constant from \eqref{b-ultra}.

\end{remark}
\begin{remark}\label{re.p-norm}
  It follows that $  \tilde\rho(x,y)=\rho_p(x,y)^{1/p},\, x,y\in X$, is a b-metric on $X$, Lipschitz equivalent to $d$ and satisfying the inequality
$$
\tilde\rho(x,y)^p\le\tilde\rho(x,z)^p+\tilde\rho(z,y)^p\,,$$
for all $x,y,z\in X$. This is  a well known fact in the theory of quasi-normed spaces, where a quasi-norm $\norm$ satisfying the inequality
$$
\|x+y\|^p\le \|x\|^p+\|y\|^p,$$
for some $0<p\le 1$ is called a $p$-norm (see Subsection \ref{Ss.quasi-normed space}).
\end{remark}

Let $(X,d)$ be a b-metric space. The b-metric $d$ is called
\bite
\item \emph{continuous} if
\begin{equation}\label{eq1.cont-metric}
d(x_n,x)\to 0\;\mbox{ and }\; d(y_n,y)\to 0\;\Longrightarrow d(x_n,y_n)\to d(x,y)\,,
\end{equation}
\item   \emph{separately continuous} if  the function $d(x,\cdot)$ is continuous on $X$ for every $x\in X$, i.e.,
\begin{equation}\label{eq2.sep-cont-metric}
  d(y_n,y)\to 0\;\Longrightarrow d(x,y_n)\to d(x,y)\,,
\end{equation}
\eite
for all sequences $(x_n),(y_n)$ in $X$ and all $x,y\in X$.

The topology $\tau_d$  generated by a b-metric $d$ has some peculiarities -- a ball $B(x,r)$ need not   be $\tau_d$-open and the b-metric $d$ could not be continuous on
$X\times X$.

\begin{remark}
  Let $(X,d)$ be a b-metric space and $x\in X$. Then
  $$
   B(x,r) \;\mbx{ is $\tau_d$-open for every } r>0 \iff d(x,\cdot) \;\mbx{ is upper semicontinuous on }\;  X.$$

   Consequently, if the b-metric is separately continuous on $X$, then the balls $B(x,r)$ are $\tau_d$-open.
\end{remark}

The equivalence follows from the equality
$$
B(x,r)=d(x,\cdot)^{-1}\big((-\infty,r)\big)\,.$$

We present now an  example of a b-metric space where the balls are not necessarily open.

\begin{example}[\cite{stempak09}] Consider  a fixed number  $\epsic >0$. For  $X=\Nat_0=\{0,1,\dots\} $  let $d:X\times X\to[0,\infty)$ be defined by
$$\begin{matrix}\label{ex.Stemp}
  &d(0,1)=1,  &d(0,m)=1+\epsic &\;\mbx{ for } m\ge 2\\
  &d(1,m)=\frac1m,  &\;\;d(n,m)=\frac1n+\frac1m  &\;\mbx{ for } n\ge 2
\end{matrix}$$
and extended to $X\times X$ by $d(n,n)=0$ and symmetry.

Then
$$
d(m,n)\le (1+\epsic)[d(n,k)+d(k,m)]\,,$$
for all $m,n,k\in X$, $B\left(0,1+\frac\epsic2\right)=\{0,1\}$ and the ball $B(1,r)$ contains an infinity of terms for every $r>0$, that is, for any $1\in B\left(0,1+\frac\epsic2\right)$,  $\, B(1,r) \nsubseteq B\left(0,1+\frac\epsic2\right)$ for every $r>0,$ showing that the ball $ B\left(0,1+\frac\epsic2\right)$ is not $\tau_d$-open.
\end{example}

Other examples are given in \cite{an-dung15}.

\begin{remark} If, for some $0<p<1,$   a b-metric $d$ on a set $X$ satisfies the inequality
\begin{equation}\label{ineq.p-metric}
d(x,y)^p\le d(x,z)^p+d(z,y)^p,\;\mbx{ for all }\; x,y,z \in X,
\end{equation}
then the balls corresponding to $d$ are $\tau_d$-open.  Moreover, the b-metric $d$ is continuous.

 By Remark \ref{re.p-norm},  the b-metric   $\tilde\rho=\rho_p^p$ corresponding to the metric    $\rho_p$ constructed in Theorem \ref{t.Stemp} satisfies  the inequality \eqref{ineq.p-metric}.
\end{remark}

Indeed, let $B(x,r)$ be a ball in $(X,d)$ and $y\in B(x,r)$. We have to show that there exists $r'>0$ such that $B(y,r')\sse B(x,r)$.
Taking $r':=\left(r^p-d(x,y)^p\right)^{1/p}>0,$ then $d(y,z)<r'$ implies
\begin{align*}
 d(x,z)^p&\le d(x,y)^p+d(y,z)^p  \\
 &< d(x,y)^p+r'^p=r^p,
\end{align*}
that is, $d(x,z)<r.$  The continuity of the b-metric $d$ follows from the inequality
$$
| d(x_n,y_n)^p- d(x,y)^p|\le  d(x_n,x)^p+ d(y_n,y)^p\,,$$
which can be proved as in the metric case (using \eqref{ineq.p-metric}).

\medskip

\textbf{Equivalence notions for b-metrics.}

In connection to the metrizability of b-metric spaces, we mention the following notions of equivalence for b-metrics.

Let $d_1,d_2$ be two b-metrics on the same set $X$. Then $d_1,d_2$ are called
\bite\item \emph{topologically equivalent} if $\tau_{d_1}=\tau_{d_2}$;
\item \emph{uniformly  equivalent} if the identity mapping $I_X$ on $X$ is uniformly continuous both from $(X,d_1)$ to $(X,d_2)$ as well as from  $(X,d_2)$ to $(X,d_1)$,
i.e.
\begin{align*}
  &\forall \epsic >0, \exists \delta(\epsic)>0\;\mbx{ such that } \forall x,y\in X\;(d_1(x,y)\le\delta(\epsic)\; \Ra\; d_2(x,y)\le \epsic),\\
   &\forall \epsic >0, \exists \delta(\epsic)>0\;\mbx{ such that }\; \forall x,y\in X\;( d_2(x,y)\le\delta(\epsic)\; \Ra\; d_1(x,y)\le \epsic)\,.
\end{align*}
\item \emph{Lipschitz equivalent} if there exist $c_1,c_2>0$ such that
$$c_1d_2(x,y)\le d_1(x,y)\le c_2d_2(x,y)\,,$$
for all $x,y\in X$
\eite

Of course,   the above definitions applies to metrics as well, as particular cases of b-metrics.

\begin{remark} It is obvious that, in general, \medskip

Lipschitz equivalence \; $\Ra$\; uniform  equivalence \; $\Ra$\;  topological equivalence. \medskip

For quasi-norms, topological equivalence is equivalent to Lipschitz equivalence.
\end{remark}

So the expression ``the topology $\tau_d$ generated by a b-metric $d$ on a set $X$ is metrizable" means that there exists a metric $\rho$ on $X$ topologically equivalent to $d$.

The problem of the existence of a metric that is  Lipschitz equivalent to a b-metric was solved in \cite{fagin03}, where this property was called \emph{metric boundedness}.
\begin{theo}[\cite{fagin03}, see also \cite{Kirk-Shah}, Theorem 12.9] Let $(X,d)$ be a b-metric space. Then $d$ is Lipschitz equivalent to a metric  if and only if
$d$ satisfies the $s$-relaxed polygonal inequality   \eqref{def.polyg} for some $s\ge 1.$
  \end{theo}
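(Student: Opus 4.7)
The plan is to prove both implications separately, with the forward direction being immediate from the Lipschitz equivalence and the reverse direction using a Frink-type ``chain infimum'' construction.

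For the implication $(\Rightarrow)$, assume that there is a metric $\rho$ on $X$ and constants $c_1,c_2>0$ with $c_1\rho(x,y)\le d(x,y)\le c_2\rho(x,y)$ for all $x,y\in X$. Given any chain $x_0,x_1,\dots,x_n$ in $X$, I would apply the ordinary (polygonal) triangle inequality for the metric $\rho$ and transport it back through the two Lipschitz bounds:
\begin{equation*}
d(x_0,x_n)\le c_2\rho(x_0,x_n)\le c_2\sum_{i=1}^n \rho(x_{i-1},x_i)\le \frac{c_2}{c_1}\sum_{i=1}^n d(x_{i-1},x_i)\,,
\end{equation*}
which is exactly the $s$-relaxed polygonal inequality \eqref{def.polyg} with $s=c_2/c_1\ge 1$.

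For the implication $(\Leftarrow)$, assume $d$ satisfies \eqref{def.polyg} for some fixed $s\ge 1$. I would define $\rho$ by the chain-infimum formula \eqref{def.1-metric}, i.e.
\begin{equation*}
\rho(x,y)=\inf\Big\{\sum_{i=1}^n d(x_{i-1},x_i)\,:\, n\in\Nat,\; x_0=x,\dots,x_n=y\Big\}\,.
\end{equation*}
The symmetry of $\rho$ is immediate from the symmetry of $d$ (reverse the chain), and the triangle inequality $\rho(x,y)\le \rho(x,z)+\rho(z,y)$ follows by the standard concatenation trick: concatenate an $\eps$-optimal chain from $x$ to $z$ with one from $z$ to $y$. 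Taking the trivial two-point chain gives $\rho(x,y)\le d(x,y)$. Conversely, for any chain connecting $x$ and $y$, hypothesis \eqref{def.polyg} gives $d(x,y)\le s\sum d(x_{i-1},x_i)$; taking the infimum over chains yields $d(x,y)\le s\rho(x,y)$.

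These two inequalities together show $\tfrac{1}{s}d\le \rho\le d$, so $\rho$ is Lipschitz equivalent to $d$; in particular $\rho(x,y)=0$ forces $d(x,y)=0$ and hence $x=y$, so the pseudometric $\rho$ is a genuine metric. I do not expect any serious obstacle here: the forward direction is one line of algebra, and the reverse direction is essentially Frink's construction, where the polygonal inequality (rather than the weaker $s$-relaxed triangle inequality) is exactly what is needed to control $d$ by $\rho$ from above with a single multiplicative constant.
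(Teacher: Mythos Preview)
Your proof is correct in both directions. However, note that the paper does not actually give a proof of this theorem: it is stated with attribution to \cite{fagin03} (and \cite{Kirk-Shah}, Theorem 12.9) and no argument follows. So there is no ``paper's own proof'' to compare against.

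That said, your argument is exactly the natural one and is essentially the proof one finds in the cited sources. The forward direction is a one-line transport of the ordinary polygonal inequality for $\rho$ through the two Lipschitz bounds; the only minor point worth making explicit is why $s=c_2/c_1\ge 1$, which follows since $c_1\rho(x,y)\le c_2\rho(x,y)$ for any $x\ne y$. For the reverse direction, your use of the chain-infimum \eqref{def.1-metric} is precisely the construction the paper already records (and attributes to Frink); the $s$-relaxed polygonal inequality is exactly the hypothesis that pins $d\le s\rho$, and the rest (symmetry, triangle inequality by concatenation, positivity from $\tfrac1s d\le\rho$) is routine. Nothing is missing.
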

\subsection{An axiomatic definition of balls in $b$-metric spaces}
H. Aimar \cite{aimar98} found a set of properties characterizing balls in b-metric spaces.

For a nonempty set $X$ consider a mapping $U:X\times (0,\infty)\to \rd P(X)$ satisfying the following properties:
\bequ\label{def.balls}
\begin{aligned}
  {\rm (i)}\quad &\bigcap_{r>0} U(x,r)=\{x\};\\
  {\rm (ii)}\quad &\bigcup_{r>0} U(x,r)=X;\\
  {\rm (iii)}\quad &0<r_1\le r_2\;\Ra\; U(x,r_1)\sse U(x,r_2);\\
  {\rm (iv)}\quad &\mbox{there exists  } \,c\ge 1\,\mbox{ such that } \\
  & y\in U(x,r)\;\Ra\; U(x,r)\sse U(y,cr)\;\mbox{ and }\;U(y,r)\sse U(x,cr),
\end{aligned}\eequ
for all $x\in X$ and $r>0$.

We call the sets $U(x,r)$ \emph{formal balls}.
\begin{remark}\label{re1.formal}
  By (i), $x\in U(x,r)$ for all $r>0$ and $x\in X$.
\end{remark}

It is easy to check that if $(X,d)$ is a $b$-metric space, where the $b$-metric $d$ satisfies the relaxed triangle inequality for some $s\ge 1,$ then
the sets $U(x,r)=B_d(x,r),\, x\in X, r>0$ satisfy the properties from \eqref{def.balls} with $c=2s.$

Conditions (i)--(iii) are easy to check.  For (iv),
if $y\in B_d(x,r)$ and $z\in B_d(x,r)$, then
$$
d(y,z)\le s(d(y,x)+d(x,z))<2sr\,,$$
i.e. $z\in B_d(y,2sr).$

Similarly,  $y\in B_d(x,r)$ and $z\in B_d(y,r)$ imply
$$
d(x,z)\le s(d(x,y)+d(y,z))<2sr\,,$$
i.e. $z\in B_d(x,2sr).$

It can be shown that, conversely,  a family of subsets of $X$ satisfying the   properties from \eqref{def.balls} generates a $b$-metric on $X$, the balls  corresponding to $d$ being tightly connected with the sets $U(x,r).$

\begin{theo}[\cite{aimar98}] \label{t1.Aimar} For  a nonempty set  $X$  and   a mapping $\, U:X\times (0,\infty)\to \rd P(X)$   satisfying the   properties from \eqref{def.balls}, define $d:X\times X\to [0,\infty)$ by
\bequ\label{def.d-U}
d(x,y)=\inf\{r>0 : y\in U(x,r) \,\mbx{ and }\, x\in U(y,r)\},\quad  x,y\in X.\eequ

Then:
\ben
\item $d$ is a $b$-metric on $X$ satisfying the relaxed triangle inequality for $s=c$;
\item the open balls $B_d(x,r)$ corresponding to $d$ and the sets $U(x,r)$ are related by the inclusions
\bequ\label{eq.ball-incl}
U\left(x,(\gamma c)^{-1}r\right)\sse B_d(x,r)\sse U(x,r),\eequ
for all $x\in X$ and $r>0$, where $\gamma >1$.
\een\end{theo}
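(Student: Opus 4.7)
The plan is to verify, in order, (a) that $d$ is a b-metric with constant $s=c$, and (b) the two inclusions in \eqref{eq.ball-incl}. Symmetry of $d$ is built into the definition \eqref{def.d-U}, so it is free. For the separation axiom, I would argue as follows: if $d(x,y)=0$, then I can extract radii $r_n\downarrow 0$ with $y\in U(x,r_n)$, and monotonicity (iii) propagates this to $y\in U(x,r)$ for every $r>0$, so $y\in\bigcap_{r>0}U(x,r)=\{x\}$ by (i); the converse $d(x,x)=0$ uses Remark \ref{re1.formal}.

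The heart of the argument is the relaxed triangle inequality. I would fix $\eps>0$ and pick $r_1<d(x,z)+\eps$, $r_2<d(z,y)+\eps$ witnessing the two infima, so that $z\in U(x,r_1)$, $x\in U(z,r_1)$, $y\in U(z,r_2)$, $z\in U(y,r_2)$ all hold. Setting $R:=\max\{r_1,r_2\}$ and using monotonicity (iii) to enlarge every radius to $R$, I would then apply (iv) twice: once to $z\in U(x,R)$ to get $U(z,R)\sse U(x,cR)$ and hence $y\in U(x,cR)$; and once to $z\in U(y,R)$ to get $U(z,R)\sse U(y,cR)$ and hence $x\in U(y,cR)$. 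This forces $d(x,y)\le cR\le c(r_1+r_2)$, and letting $\eps\to 0$ yields $d(x,y)\le c[d(x,z)+d(z,y)]$. The symmetric form of (iv) (giving \emph{both} $U(x,r)\sse U(y,cr)$ and $U(y,r)\sse U(x,cr)$) is essential here, because the definition of $d$ demands memberships in both directions simultaneously.

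For the inclusions, the right-hand one $B_d(x,r)\sse U(x,r)$ is immediate from (iii): if $d(x,y)<r$, then some $r'<r$ witnesses $y\in U(x,r')\sse U(x,r)$. For the left-hand one, given $\gamma>1$ and $y\in U(x,\rho)$ with $\rho:=(\gamma c)^{-1}r$, (iv) yields $U(x,\rho)\sse U(y,c\rho)=U(y,r/\gamma)$, whence $x\in U(y,r/\gamma)$; since $c\ge 1$, also $\rho\le r/\gamma$, so (iii) gives $y\in U(x,\rho)\sse U(x,r/\gamma)$. Both memberships at radius $r/\gamma<r$ imply $d(x,y)\le r/\gamma<r$, so $y\in B_d(x,r)$; the parameter $\gamma>1$ is exactly what is needed to secure the strict inequality and hence membership in the open ball.

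The only real obstacle is the triangle inequality: one must remember that the two arms $d(x,z)$ and $d(z,y)$ may be realized by different radii and must be aligned via (iii) before (iv) can be applied, and one must use the symmetric form of (iv) to recover both memberships required by the definition of $d$. Everything else is straightforward bookkeeping with the monotonicity and intersection axioms.
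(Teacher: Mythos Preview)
Your proposal is correct and follows essentially the same approach as the paper: approximate witnesses for the two infima, enlarge to a common radius via (iii), and apply (iv) to obtain the two memberships needed for $d(x,y)$; the inclusions are handled identically. The only cosmetic differences are that you align the two radii at $R=\max\{r_1,r_2\}$ whereas the paper uses $r_1+r_2$, and for the left-hand inclusion you get $y\in U(x,r/\gamma)$ by monotonicity (using $c\ge 1$) rather than from the second clause of (iv)---both routes are fine.
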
\begin{proof}
  We shall verify only the relaxed triangle inequality. Let $x,y,z\in X $ and $\epsic >0$. By the definition \eqref{def.d-U} of $d$ there exists $r_1,r_2>0$ such that
  \bequ\label{eq1.d-U}
  \begin{aligned}
     & 0<r_1<d(x,z)+\epsic\;\mbx{ and }\; z\in U(x,r_1), \, x\in U(z,r_1);\\
  & 0<r_2<d(z,y)+\epsic\;\mbx{ and }\; z\in U(y,r_2), \, y\in U(z,r_2).
  \end{aligned}\eequ

  Taking into account (iii) it follows
  $$
          x,y\in U(z,r_1+r_2) \; \mbx {and }\;
     z\in U(x,r_1+r_2)\cap   U(y,r_1+r_2)\,.$$

  Applying (iv) to $x\in U(z,r_1+r_2)$ one obtains
  $$ y\in U(z,r_1+r_2)\sse U(x,c(r_1+r_2)).$$

  Similarly, $y\in U(z,r_1+r_2)$ implies
  $$ x\in U(z,r_1+r_2)\sse U(y,c(r_1+r_2)),$$
  so that, by the definition of $d,\, d(x,y)\le c(r_1+r_2)$. But then, by adding the inequalities \eqref{eq1.d-U}, one obtains
  $$
  d(x,y)\le  c(r_1+r_2)<c(d(x,z)+d(z,y))+2c\epsic\,.$$

  Since these hold for all $\epsic >0,$ it follows
   $$
  d(x,y)\le  c(d(x,z)+d(z,y))\,.$$

  Let us prove now the inclusions \eqref{eq.ball-incl}.

  If $d(x,y)<r$, then there exists $0<r'<r$ such that $y\in U(x,r')$ and $x\in U(y,r'). $ By (iii), $U(x,r')\sse U(x,r)$, so that $B_d(x,r)\sse U(x,r).$

  Let now $y\in U(x,(\gamma c)^{-1}r).$  By Remark \ref{re1.formal} and (iv),
\begin{align*}
  &x\in U\left(x,(\gamma c)^{-1}r\right)\sse U\left(y, r/\gamma\right),\; \mbx{and}\\
  &y\in U\left(y,(\gamma c)^{-1}r\right)\sse U\left(x, r/\gamma\right)\,.
\end{align*}

  By   the definition of $d,\, d(x,y)\le r/\gamma<r\,,$ i.e. $y\in B_d(x,r).$
\end{proof}

\begin{remark}
   In \cite{aimar98} it is shown that $d$ satisfies the relaxed triangle inequality with $s=2c.$
   Also, the first inclusion in \eqref{eq.ball-incl} is proved for $\gamma =2,$ i.e.  it is shown that $U\left(y,(2c)^{-1}r\right)\sse B_d(x,r)$.
   \end{remark}

In \cite{aimar10} a similar characterization is given in terms of some subsets of $X\times X$. Denoting by $\Delta$ the diagonal of $X\times X$,
$$\Delta =\{(x,x) : x\in X\}\,,$$
one considers a mapping $V:(0,\infty)\to \rd P(X\times X)$ satisfying the properties:
\bequ\label{def.ant}
\begin{aligned}
  {\rm (j)}\quad &\bigcap_{r>0} V(r)=\Delta;\\
  {\rm (jj)}\quad &\bigcup_{r>0} V(r)=X\times X;\\
  {\rm (jjj)}\quad &0<r_1\le r_2\;\Ra\; V(r_1)\sse V(r_2);\\
  {\rm (jv)}\quad &\mbox{there exists  } \,c\ge 1\,\mbox{ such that } V(r)\circ V(r)\sse V(cr).\\
  \end{aligned}\eequ
for all $r>0$.

By analogy with the case of uniform spaces we call the sets $V(r)$ \emph{antourages}.
If $(X,d)$ is a $b$-metric space with $d$ satisfying the relaxed triangle inequality for some $s\ge 1,$ then the sets
$$
W_d(r)=\{(x,y)\in X\times X : d(x,y)<r\},\quad, r>0,$$
satisfy the conditions from \eqref{def.ant} with $c=2s$. Indeed, the conditions (j)--(jjj) are easily verified. To check (jv), suppose that $(x,y)\in  W_d(r)\circ V(r).$
Then there  exists $z\in X$ such that $(x,z)\in W_d(r)$ and $(z,y)\in W_d(r),$ implying
$$
d(x,y)\le s(d(x,z)+d(z,y))<2sr\,,$$
showing that (jv) holds with $c=2s.$

A converse result holds in this case too.

\begin{theo}[\cite{aimar10}] For  a nonempty set  $X$  and   a mapping $\,V: (0,\infty)\to \rd P(X\times X)$   satisfying the   properties from \eqref{def.ant}, define $d:X\times X\to [0,\infty)$ by
\bequ\label{def.d-unif}
d(x,y)=\inf\{r>0 : (x,y)\in V(r) \},\quad x,y\in X.\eequ

Then:
\ben
\item $d$ is a $b$-metric on $X$ satisfying the relaxed triangle inequality for $s=c$;
\item for any $0<\gamma <1$ the following inclusions hold
\bequ\label{eq1.inclusions}
V\left(\gamma r\right)\sse W_d(r)\sse V(r),\eequ
for all   $r>0$.
\een\end{theo}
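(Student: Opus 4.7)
The proof will parallel the one given for Theorem \ref{t1.Aimar}, and will in fact be somewhat simpler since the composition axiom (jv) replaces the more delicate pointwise property (iv). I shall verify the three $b$-metric axioms for $d$ defined by \eqref{def.d-unif} and then establish the inclusions \eqref{eq1.inclusions}.

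First, I would show that $d$ takes values in $[0,\infty)$ and satisfies axiom (i) of \eqref{def.b-metric}. Well-definedness of the infimum follows from (jj), which guarantees that for every $(x,y)$ there is some $r$ with $(x,y)\in V(r)$. From (j) combined with (jjj) one gets $\Delta\sse V(r)$ for every $r>0$, so $(x,x)\in V(r)$ for all $r>0$ and hence $d(x,x)=0$. Conversely, if $d(x,y)=0$, then for each $r>0$ the infimum characterization furnishes some $r'<r$ with $(x,y)\in V(r')$, and then $(x,y)\in V(r)$ by (jjj); therefore $(x,y)\in \bigcap_{r>0}V(r)=\Delta$ by (j), i.e., $x=y$.

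For axiom (ii), symmetry of $d$, I would note that the antourages $V(r)$ should be taken symmetric (as is customary for entourages of uniform spaces, and as is implicit for the canonical example $W_d(r)$). Under this hypothesis symmetry of $d$ is immediate from \eqref{def.d-unif}; otherwise one replaces $V(r)$ by $V(r)\cap V(r)^{-1}$, which still satisfies the axioms \eqref{def.ant} up to a controlled change in the constant $c$. For the $c$-relaxed triangle inequality (iii), I would fix $x,y,z\in X$ and $\epsic>0$, use the infimum to pick $r_1<d(x,z)+\epsic$ with $(x,z)\in V(r_1)$ and $r_2<d(z,y)+\epsic$ with $(z,y)\in V(r_2)$, and set $r=\max\{r_1,r_2\}$. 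By (jjj) both pairs lie in $V(r)$, so by (jv)
\begin{equation*}
(x,y)\in V(r)\circ V(r)\sse V(cr),
\end{equation*}
whence $d(x,y)\le cr\le c(r_1+r_2)<c(d(x,z)+d(z,y))+2c\epsic$. Letting $\epsic\downarrow 0$ yields the relaxed triangle inequality with $s=c$.

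Finally, the inclusions \eqref{eq1.inclusions} are essentially bookkeeping around the infimum. For $W_d(r)\sse V(r)$, if $d(x,y)<r$ then some $r''<r$ satisfies $(x,y)\in V(r'')$, and (jjj) gives $(x,y)\in V(r)$. For $V(\gamma r)\sse W_d(r)$ with $0<\gamma<1$, every $(x,y)\in V(\gamma r)$ satisfies $d(x,y)\le \gamma r<r$ directly from \eqref{def.d-unif}, so $(x,y)\in W_d(r)$. The only real subtlety in the whole argument is the symmetry question above; everything else is a direct translation of the composition axiom (jv) into the relaxed triangle inequality, which is why this formulation is cleaner than the $U(x,r)$-based one of Theorem \ref{t1.Aimar}.
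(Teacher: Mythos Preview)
Your proposal is correct and follows essentially the same route as the paper: the paper also picks $r_1,r_2$ just above $d(x,z),d(z,y)$, pushes both pairs into a common $V(r)$ via (jjj) (using $r=r_1+r_2$ rather than your $r=\max\{r_1,r_2\}$, an immaterial difference), applies (jv), and lets $\epsic\to 0$; the inclusions \eqref{eq1.inclusions} are handled exactly as you do. In fact you are more thorough than the paper, which proves only the relaxed triangle inequality and the inclusions and is silent on the symmetry issue you correctly flag.
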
\begin{proof}
  Again, we prove only the validity of the relaxed triangle inequality. Let $x,y,z\in X$. By the definition of $d$ there exist $r_1,r_2>0$ such that
  \bequ\label{eq1.d-unif}
  \begin{aligned}
     & 0<r_1<d(x,z)+\epsic\;\mbx{ and }\; (x,z)\in V(r_1);\\
  & 0<r_2<d(z,y)+\epsic\;\mbx{ and }\; (z,y)\in V(r_2).
  \end{aligned}\eequ

  By (jjj) $V(r_1)\cup V(r_2)\sse V(r_1+r_2)$ so that, by (jv), $(x,y)\in V(c(r_1+r_2)),$ implying $d(x,y)\le c(r_1+r_2).$

 Consequently, the   inequalities \eqref{eq1.d-U}  yield by addition
  $$
  d(x,y)\le  c(r_1+r_2)<c(d(x,z)+d(z,y))+2c\epsic\,.$$

  Since these hold for all $\epsic >0,$ it follows
   $$
  d(x,y)\le  c(d(x,z)+d(z,y))\,.$$

  The proof of the inclusions \eqref{eq1.inclusions} is simpler than in the case considered in Theorem \ref{t1.Aimar}.

  Indeed, $(x,y)\in W_(r)$ is equivalent to $d(x,y)<r.$ By the definition of $d$, there exists $0<r'<r$ such that $(x,y)\in V(r')$. By (jjj),
  $V(r')\sse V(r),$ showing that $(x,y)\in V(r)$, i.e. $W_d(r)\sse V(r).$

  If $(x,y)\in V(\gamma r), $ then
  $$
  d(x,y)\le\gamma r<r\,,$$
  showing that $(x,y)\in W_d(r)$, i.e. $V(\gamma r)\sse W_d(r).$
\end{proof}

\subsection{Strong  b-metric spaces and completion} Let $(X,d)$ be a b-metric space. As we have seen, the topology $\tau_d$ generated by the b-metric $d$ has  some drawbacks  in what concerns the continuity property of $d$ and the topological openness of the ``open" balls. To remedy these shortcomings Kirk and Shahzad \cite[\S 12.4]{Kirk-Shah} introduced a special class of b-metrics. A mapping $d:X\times X\to [0,\infty)$ is called a \emph{strong b-metric} if it satisfies the conditions (i) and (ii) from \eqref{def.b-metric} and
\begin{equation}\label{s-b-metric}\tag{{\rm v}}
d(x,y)\le d(x,z)+sd(y,z)\,,
\end{equation}
for some $s\ge 1$ and all $x,y,z\in X.$ It is obvious that \eqref{s-b-metric} is equivalent to
\begin{equation}\label{s-b-m2}\tag{{\rm v$'$}}
d(x,y)\le \min\{sd(x,z)+d(y,z),d(x,z)+sd(y,z)\}\,,
\end{equation}
for all $x,y,z\in X, $ and that \eqref{s-b-metric} implies the $s$-relaxed triangle inequality (iii) from \eqref{def.b-metric}.

The topology generated by a strong b-metric has good properties as, for instance, the openness of the balls $B(x,r)$. Indeed, if $y\in B(x,r)$ then
$$d(y,z)\le d(x,y+sd(y,z)<\epsic\,,$$
provided $sd(y,z)<\epsic- d(x,y)$, that is $B(y,r')\sse B(x,r), $ where $r'= (\epsic- d(x,y))/s.$

Also   the following inequality
\begin{equation}\label{eq1.sbm}
|d(x,y)-d(x',y')|\le s[d(x,x')+d(y,y')]\,,
\end{equation}
holds for all $x,y,x',y'\in X,$  implying the continuity of the b-metric:  if $d(x_n,x)\to 0$ and $d(y_n,y)\to 0$, then the relations
$$
|d(x_n,y_n)-d(x,y)|\le s[d(x_n,x)+d(y_n,y)]\longrightarrow 0\;\mbox{ as }\; n\to\infty\,,
$$
show that $d(x_n,y_n)\lra d(x,y)$ as $n\to\infty.$

  A strong $b$-metric satisfies the $s$-polygonal inequality. Indeed,
\begin{align*}
  d(x_0,x_n)&\le  sd(x_0,x_1)+d(x_1,x_n)\le sd(x_0,x_1)+s d(x_1,x_2)+d(x_2,x_n)\le\dots\\
  &\le s[sd(x_0,x_1)+s d(x_1,x_2)+\dots+d(x_{n-1},x_n)]\,.
\end{align*}
\newpage

\textbf{Completeness and completion.}

A \emph{Cauchy sequence} in a b-metric space $(X,d) $ is a sequence $(x_n)$ in $X$ such that $\lim_{m,n\to\infty}d(x_n,x_m)=0.$ The inequality $d(x_n,x_m)\le s\left[d(x_n,x)+d(x,x_m)\right] $ shows that every convergent sequence is Cauchy. The b-metric space $(X,d)$ is called
\emph{complete} if every Cauchy sequence converges to some $x\in X$.
By a \emph{completion} of a b-metric space $(X,d)$ one understands   a complete b-metric space $(Y,\rho)$  such that there exists  an isometric embedding  $\,j:X\to Y$ with $j(X)$   dense in $Y$.

By an \emph{isometric embedding} of a  b-metric space $(X_1,d_1)$ into a b-metric space  $(X_2,d_2)$ one understands a mapping $f:X_1\to X_2$ such that $$d_2(f(x),f(y))=d_1(x,y)\,,$$ for all $x,y\in X_1.$ Two b-metric spaces $(X_1,d_1)$,  $(X_2,d_2)$ are called \emph{isometric} if there exists a surjective isometric embedding  $f:X_1\to X_2.$

The completeness is  preserved by the uniform  equivalence of b-metrics, but not by the  topological equivalence.

A question raised in \cite[p. 128]{Kirk-Shah} is: \smallskip

\emph{Does every strong b-metric space admit a completion}?\smallskip

This question was answered in the affirmative in \cite{an-dung16}.
\begin{theo}\label{t.compl-bm} Let  $(X,d)$ be a strong b-metric space.
 \begin{enumerate}\item[ \rm 1. ]There exists    a complete strong b-metric space   $(\tilde X,\tilde d\,)$ which  is a completion of $(X,d)$.
 \item[\rm 2. ] The completion is unique up to an isometry, in the sense that if  $(X_1,d_1)$, $(X_2,d_2)$ are two strong b-metric spaces which are completions of
 $(X,d)$, then $(X_1,d_1)$ and  $(X_2,d_2)$ are isometric.
 \end{enumerate}
\end{theo}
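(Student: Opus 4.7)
The plan is to mimic the classical completion construction for metric spaces via equivalence classes of Cauchy sequences, with the key observation that the strong triangle inequality \eqref{s-b-metric}, via its consequence \eqref{eq1.sbm}, provides exactly the continuity needed to make every step go through. The crucial obstruction for general b-metrics (where $d$ may fail to be continuous) is precisely what is absent here, so the construction proceeds along familiar lines.

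First I would let $\mathcal{C}$ denote the set of all Cauchy sequences in $(X,d)$ and define $(x_n)\sim(y_n)$ iff $d(x_n,y_n)\to 0$. The inequality \eqref{eq1.sbm} gives
\begin{equation*}
|d(x_n,y_n)-d(x_m,y_m)|\le s[d(x_n,x_m)+d(y_n,y_m)],
\end{equation*}
so $(d(x_n,y_n))$ is Cauchy in $\mathbb{R}$, hence convergent. The same inequality applied across equivalent sequences shows $\sim$ is an equivalence relation and that
\begin{equation*}
\tilde d([(x_n)],[(y_n)]):=\lim_{n\to\infty}d(x_n,y_n)
\end{equation*}
is well defined on $\tilde X:=\mathcal{C}/\!\sim$. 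Properties (i) and (ii) of \eqref{def.b-metric} are immediate, and passing to the limit in the pointwise strong inequality $d(x_n,y_n)\le d(x_n,z_n)+sd(y_n,z_n)$ yields \eqref{s-b-metric} for $\tilde d$ with the same constant $s$; thus $(\tilde X,\tilde d)$ is a strong b-metric space.

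Next I would define $j:X\to\tilde X$ by $j(x)=[(x,x,\dots)]$, which is patently an isometric embedding. Density of $j(X)$ follows from the observation that for $\xi=[(x_n)]\in\tilde X$,
\begin{equation*}
\tilde d(j(x_n),\xi)=\lim_{m\to\infty}d(x_n,x_m)\xrightarrow[n\to\infty]{}0
\end{equation*}
by the Cauchy property of $(x_n)$. For completeness of $(\tilde X,\tilde d)$, given a Cauchy sequence $(\xi_k)$ in $\tilde X$, I would use density to pick $y_k\in X$ with $\tilde d(\xi_k,j(y_k))<1/k$; the inequality \eqref{eq1.sbm} upstairs (now for $\tilde d$) shows $(j(y_k))$ is Cauchy in $\tilde X$, which in turn forces $(y_k)$ to be Cauchy in $X$, and one checks directly that $\xi_k\to[(y_k)]$.

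Finally, for uniqueness (part 2), I would use the standard argument: if $(X_1,d_1),(X_2,d_2)$ are two completions with isometric embeddings $j_1,j_2$, define $\Phi:j_1(X)\to j_2(X)$ by $\Phi=j_2\circ j_1^{-1}$; this is a surjective isometry between dense subsets, and since strong b-metrics are continuous by \eqref{eq1.sbm}, $\Phi$ extends uniquely by density to a surjective isometry $\bar\Phi:X_1\to X_2$. The main obstacle in the whole argument is not any single step but rather ensuring that the strong inequality is really used everywhere it is needed — in the existence of the limit defining $\tilde d$, in its continuity, and in the extension-by-density step — and that it genuinely propagates from $d$ to $\tilde d$ with the same constant $s$; this is why the theorem is stated for strong b-metric spaces rather than general b-metric spaces.
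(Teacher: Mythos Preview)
Your proposal is correct and follows essentially the same approach as the paper: both construct $\tilde X$ as the quotient of the set of Cauchy sequences by the relation $(x_n)\sim(y_n)\iff d(x_n,y_n)\to 0$, define $\tilde d$ as the limit of $d(x_n,y_n)$, and verify that this yields a complete strong b-metric space containing $X$ isometrically and densely. Your write-up is in fact more detailed than the paper's, which merely sketches the construction and says ``the proof follows the ideas from the metric case''; your explicit use of \eqref{eq1.sbm} at each step (existence of the limit, well-definedness, propagation of the strong inequality to $\tilde d$, and extension-by-density for uniqueness) makes precise exactly what the paper leaves implicit.
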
\begin{proof}
The proof follows the ideas from the metric case. On the family $\rd C(X)$ of all Cauchy sequences in $X$ one considers the equivalence relation
$$
(x_n)\sim (y_n)\iff \lim_nd(x_n,y_n)=0\,.$$

On the quotient space $ \tilde X=\rd C(X)/\!\!\sim$ one defines $\tilde d$ by $\tilde d(\xi,\eta)=\lim_nd(x_n,y_n)$, where $(x_n)\in\xi$ and $(y_n)\in\eta,$
 and one shows  that $(\tilde X,\tilde d)$ is a complete strong b-metric space containing $X$ isometrically as a dense subset.
\end{proof}

\begin{remark}
As it is mentioned in  \cite{an-dung16}, the existence of a completion of an arbitrary b-metric space is still an  important open problem.
\end{remark}

\subsection{Spaces of homogeneous type}\label{Ss.homog-sp}

 Completing some earlier results of Coifman and de Guzman \cite{coif-guzman70}, Mac\'{\i}as and Segovia  \cite{maci-sego79a,maci-sego79b}  considered b-metrics (under the name quasi-distances)  in connection with some problems in    harmonic analysis.

The framework in \cite{maci-sego79a} is the following. Let $(X,d)$ be a b-metric space. One considers a positive measure $\mu$ defined on a $\sigma$-algebra of subsets of $X$ containing the open sets and the balls $B(x,r)$ such that
\begin{equation}\label{eq1.homog}
0<\mu\left(B(x,ar)\right)\le \beta \mu\left(B(x,r)\right)\,,
\end{equation}
for all $x\in X$ and $r>0$, where     $a>1$ and $A>0$ are fixed numbers. A b-metric space equipped with a measure $\mu$ satisfying \eqref{eq1.homog} is called a space of \emph{homogeneous type} and is denoted by $(X,d,\mu)$. If further, there exist $c_1,c_2>0$ such that
\begin{equation}\label{eq2.homog}
0<\mu(\{x\})<r<\mu(X)\;\Lra\; c_1 r\le \mu\left(B(x,r)\right)\le c_2r\,,
\end{equation}
for all $x\in X$, then the space $(X,d,\mu)$ of homogeneous type is called \emph{normal}.
\begin{remark}
  One can show that if $(X,d)$ is a b-metric space with a positive measure $\mu$ satisfying \eqref{eq2.homog}, then the space $(X,d,\mu)$ is of homogeneous type.
\end{remark}

     Concerning the openness of balls in b-metric spaces we mention the following result.
     \begin{theo}[\cite{maci-sego79a}] Let $(X,d)$ be a b-metric space. Then there exist a b-metric $d'$ on $X$, Lipschitz equivalent to $d$, and the constants $C>0$ and $0<\alpha<1$ such that
        \begin{equation}\label{eq1.maci}
        |d'(x,z)-d'(y,z)|\le Cr^{1-\alpha} d'(x,y)^\alpha,\end{equation}
         whenever $\max\{d'(x,z), d'(y,z)\}<r.$
            \end{theo}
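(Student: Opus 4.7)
The plan is to combine Theorem~\ref{t.Stemp} with Remark~\ref{re.p-norm} to produce a b-metric $d'$ that is Lipschitz equivalent to $d$ and satisfies a $p$-metric inequality; the H\"older-type estimate \eqref{eq1.maci} will then follow from this inequality by a one-line mean value argument.

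First, I would fix the exponent $p\in(0,1]$ determined by $(2s)^p=2$ and let $\rho_p$ be the metric furnished by Theorem~\ref{t.Stemp}, so that $\rho_p\le d^p\le 2\rho_p$. Setting $d':=\rho_p^{1/p}$, Remark~\ref{re.p-norm} guarantees that $d'$ is a b-metric on $X$, Lipschitz equivalent to $d$, and satisfies the $p$-metric inequality
\beqs
d'(x,y)^p\le d'(x,z)^p+d'(z,y)^p\q\mbx{for all }\; x,y,z\in X.
\eeqs
If $s=1$ then $d$ is already a metric and the conclusion is immediate, so one may assume $s>1$, hence $0<p<1$.

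Next, to deduce \eqref{eq1.maci} for $d'$, fix $x,y,z\in X$ with $\max\{d'(x,z),d'(y,z)\}<r$ and assume without loss of generality that $a:=d'(x,z)\ge d'(y,z)=:b$. The $p$-inequality applied with $y$ in the middle yields $a^p-b^p\le d'(x,y)^p$. Applying the mean value theorem to $f(t)=t^{1/p}$ on $[b^p,a^p]\sse[0,r^p]$ produces some $\xi\in(b^p,a^p)$ with $a-b=\tfrac1p\,\xi^{\frac1p-1}(a^p-b^p)$. Since $\frac1p-1>0$ and $\xi<r^p$, one has $\xi^{\frac1p-1}\le r^{1-p}$, whence
\beqs
|d'(x,z)-d'(y,z)|=a-b\le \tfrac1p\,r^{1-p}\,d'(x,y)^p,
\eeqs
which is the desired estimate with $\alpha=p\in(0,1)$ and $C=1/p$.

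Given Theorem~\ref{t.Stemp}, no step presents a genuine obstacle; the substantive work has already been absorbed into the construction of $\rho_p$. The main point to recognise is that Remark~\ref{re.p-norm} repackages Stempak's result as a $p$-metric inequality, which, being sub-additive only in the $p$-th powers of $d'$, linearises under a single mean-value step to yield precisely a H\"older-continuity bound with exponent $\alpha=p$.
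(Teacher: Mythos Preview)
The paper does not actually supply a proof of this theorem; it is merely quoted from \cite{maci-sego79a}. Your argument is therefore not a comparison target but a genuine proof, and it is correct: passing through Theorem~\ref{t.Stemp} and Remark~\ref{re.p-norm} to obtain a $p$-metric $d'=\rho_p^{1/p}$, and then using the mean value theorem on $t\mapsto t^{1/p}$, yields the H\"older estimate with $\alpha=p$ and $C=1/p$. This route is quite different in spirit from the original Mac\'{\i}as--Segovia argument (which predates the Paluszy\'nski--Stempak metrization and works directly with Frink-type chain constructions); your approach is shorter because the hard analytic work has been absorbed into Theorem~\ref{t.Stemp}, at the cost of relying on a result proved much later than the theorem being established.

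One small point deserves a cleaner treatment: the case $s=1$. You dismiss it as ``immediate'', but note that the statement requires $0<\alpha<1$, while the ordinary triangle inequality only gives $\alpha=1$. The fix is easy---if $d$ is a metric then $d^p$ is subadditive for every $0<p<1$ (since $(a+b)^p\le a^p+b^p$), so $d'=d$ satisfies the $p$-metric inequality for any such $p$, and your mean value step then applies verbatim with that $p$. It would be worth saying this explicitly rather than waving it off.
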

  \begin{remark}
             The inequality \eqref{eq1.maci} can be written in the equivalent form
            \begin{equation}\label{eq2.maci}
        |d'(x,z)-d'(y,z)|\le C  d'(x,y)^\alpha\left(\max\{d'(x,z),d'(y,z)\}\right)^{1-\alpha},\end{equation}
         and it is easy to check that the balls corresponding to a b-metric $d'$ satisfying \eqref{eq2.maci} are $\tau_{d'}$-open.
   \end{remark}

  Indeed, let $B_{d'}(x,r)$ be a ball.   We have to show that for every $y\in B_{d'}(x,r)$ there exists $r'>0$ such that $B_{d'}(y,r')\sse B_{d'}(x,r),$ that is,
   $$
   d'(y,z)<r' \;\Lra \; d'(x,z)<r\,,$$
   for any $z\in X$. Supposing that $d'$ satisfies the $s'$-relaxed triangle inequality for some $s'\ge 1$, choose first $0<r'<r$. Then
   $$
   d'(x,z)\le s'd'(x,y)+s'd'(y,z)<2s'r\,.$$

   By \eqref{eq2.maci},
\begin{align*}
   d'(x,z)&\le d'(x,y)+|d'(x,z)-d'(x,y)|\\
   &<d'(x,y)+Cd'(y,z)^\alpha\left(\max\{d'(x,z),d'(x,y)\}\right)^{1-\alpha}\\
    &<d'(x,y)+C(2s'r)^{1-\alpha}(r')^\alpha\,.
\end{align*}

Choosing $0<r'<r\,$ such that $\,C(2s'r)^{1-\alpha}(r')^\alpha<r-d'(x,y)$, it follows $d'(x,z)<r.$

Concerning the set of points $x\in X$ with $\mu(\{x\})>0$ we mention.
\begin{prop}[\cite{maci-sego79a}] Let $(X,d,\mu)$ be a space of homogeneous type and
$$
M=\{x\in X :\mu(\{x\})>0\}\,.$$

Then the set $M$ is at most countable and for every $x\in M$ there exists $r>0$ such that $M\cap B(x,r)=\{x\}.$
\end{prop}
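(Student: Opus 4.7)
The plan is to handle the isolation property first, since the countability of $M$ will follow from it routinely. Let $x\in M$ with $\alpha:=\mu(\{x\})>0$, and suppose for contradiction that $x$ is not isolated in $M$. Then I can extract a sequence $y_n\in M\setminus\{x\}$ with $\delta_n:=d(x,y_n)\to 0$, and (passing to a subsequence) may assume $\delta_n$ is strictly decreasing. The key idea is to apply the doubling condition \eqref{eq1.homog} centred at $y_n$ instead of at $x$.

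Because $a>1$, the open ball $B(y_n,a\delta_n)$ contains $x$, so $\mu(B(y_n,a\delta_n))\ge\alpha$; the doubling inequality then yields $\mu(B(y_n,\delta_n))\ge\alpha/\beta$. On the other hand, the $s$-relaxed triangle inequality gives $B(y_n,\delta_n)\sse B(x,2s\delta_n)$, while openness together with $d(x,y_n)=\delta_n$ keeps $x$ out of $B(y_n,\delta_n)$. Hence $\{x\}$ and $B(y_n,\delta_n)$ are disjoint subsets of $B(x,2s\delta_n)$, giving
\[
\mu(B(x,2s\delta_n))\;\ge\;\mu(\{x\})+\mu(B(y_n,\delta_n))\;\ge\;\alpha\bigl(1+1/\beta\bigr)\,.
\]
Since $\bigcap_n B(x,2s\delta_n)=\{x\}$ and balls carry finite measure (implicit in the definition of homogeneous type), continuity of measure from above forces $\mu(B(x,2s\delta_n))\to\alpha$, contradicting the displayed bound.

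For the countability of $M$, I fix any $x_0\in X$ and use $X=\bigcup_{n\in\Nat}B(x_0,n)$ (since $d$-distances are finite). Inside each ball $B(x_0,n)$, the number of atoms of mass at least $1/k$ is bounded by $k\,\mu(B(x_0,n))<\infty$, and taking the union over $k\in\Nat$ shows that $M\cap B(x_0,n)$ is countable; hence $M$ is countable. (Alternatively, countability follows from the isolation property together with the separability of the doubling space $X$.)

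The main obstacle is spotting the change-of-base-point trick: doubling at $x$ alone cannot produce a contradiction, because the masses $\mu(\{y_n\})$ of nearby atoms may decay to $0$. Doubling at $y_n$ instead transports the \emph{fixed} mass $\alpha=\mu(\{x\})$ into the small ball $B(y_n,\delta_n)$, which sits disjoint from $x$. The delicate bookkeeping is to have $x\in B(y_n,a\delta_n)$ (which is why $a>1$ is essential) while $x\notin B(y_n,\delta_n)$ (which is why the ball must be open), and to justify the continuity-of-measure step, where finiteness of ball measures enters.
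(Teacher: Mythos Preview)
The paper does not supply its own proof of this proposition; it is quoted verbatim from Mac\'{\i}as--Segovia \cite{maci-sego79a} and left unproved, so there is nothing in the paper to compare against.

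On its own merits your argument is correct and is essentially the standard one. Two minor observations. First, your isolation argument never uses the hypothesis $y_n\in M$: it works for any sequence $y_n\neq x$ with $d(x,y_n)\to 0$, and therefore actually yields the stronger conclusion (which is what Mac\'{\i}as and Segovia prove) that every atom is isolated in $X$ itself, i.e.\ $B(x,r)=\{x\}$ for some $r>0$. Second, the finiteness of $\mu(B(x,r))$ that you invoke for continuity from above and for the counting argument is indeed implicit in the standard definition of a space of homogeneous type (without it the doubling inequality \eqref{eq1.homog} is vacuous), so flagging it as an assumption is appropriate but not a gap.
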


We mention also the following result.
\begin{theo}[\cite{maci-sego79a}] Let $(X,d,\mu)$ be a space of homogeneous type such that the balls are $\tau_d$-open. Let $\delta:X\times X\to[0,\infty)$ be given by
$$
\delta(x,y)=\inf\{\mu(B) : B\mbx{ is a ball containing } x,y\} \,,$$
if $x\ne y$ and $\delta(x,x)=0$.

Then $\delta$ is a b-metric on $X$, $(X,\delta,\mu)$ is a normal space and $\tau_\delta=\tau_d.$
\end{theo}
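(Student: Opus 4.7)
The plan is to verify in order: (i) $\delta$ is a $b$-metric; (ii) $(X,\delta,\mu)$ satisfies the normality condition; (iii) $\tau_\delta=\tau_d$. Symmetry and $\delta(x,x)=0$ are built into the definition. For positivity when $x\ne y$, I first pick $r_0<d(x,y)$ so that $y\notin B_d(x,r_0)$; any $d$-ball $B_d(z,r)$ containing both $x$ and $y$ then satisfies $d(x,y)\le s(d(x,z)+d(z,y))<2sr$, hence $r>r_0/(2s)$. Using the $\tau_d$-openness of balls together with the doubling inequality \eqref{eq1.homog} applied at $z$, I then derive a positive lower bound on $\mu(B_d(z,r))$ depending only on $x$, $y$ and the doubling data. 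The relaxed triangle inequality for $\delta$ comes from a ball-combining argument: given near-optimal $d$-balls $B_i=B_d(c_i,r_i)$ with $B_1\ni x,z$ and $B_2\ni z,y$, the relaxed triangle inequality for $d$ gives $d(c_1,c_2)<s(r_1+r_2)$, and hence $B_1\cup B_2\sse B_d(c_1,R)$ for some $R$ comparable to $\max(r_1,r_2)$. Doubling then bounds $\mu(B_d(c_1,R))\le K(\mu(B_1)+\mu(B_2))$, and passing to the infimum yields $\delta(x,y)\le K(\delta(x,z)+\delta(z,y))$.

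The key observation for (ii) and (iii) is the description
\begin{equation*}
B_\delta(x,r)=\bigcup\{B : x\in B,\ B\text{ is a $d$-ball with }\mu(B)<r\},
\end{equation*}
which holds because $\delta(x,w)<r$ is equivalent to the existence of a $d$-ball containing both $x$ and $w$ with measure $<r$. For the upper bound $\mu(B_\delta(x,r))\le c_2 r$, I pick a near-maximal radius $\rho^*$ among the balls $B_d(z,\rho)$ appearing in the union; the chain of containments $B_d(z,\rho^*)\sse B_d(x,2s\rho^*)\sse B_d(z,3s^2\rho^*)$ together with doubling at $z$ gives $\mu(B_d(x,2s\rho^*))\le Cr$, while the union itself sits inside $B_d(x,2s\rho^*)$. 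For the lower bound $\mu(B_\delta(x,r))\ge c_1 r$, I choose a single $d$-ball $B\ni x$ whose measure is a definite fraction of $r$ (such a ball exists because $\rho\mapsto\mu(B_d(x,\rho))$ is monotone nondecreasing and multiplies by at most $\beta$ each time $\rho$ is multiplied by $a$), so that $B\sse B_\delta(x,r)$. The coincidence $\tau_\delta=\tau_d$ is then immediate: the normality just established translates $\delta$-balls into $d$-balls via inclusions $B_d(x,\rho_r)\sse B_\delta(x,r)\sse B_d(x,\rho'_r)$ with $\rho_r,\rho'_r>0$, identifying the two topologies.

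I expect the main obstacle to be the positivity step of (i). The center $z$ of a $d$-ball witnessing a small value of $\delta(x,y)$ may lie at $d$-distance nearly $r$ from $x$, so one cannot directly extract a $d$-ball around $x$ contained in $B_d(z,r)$ whose size is controlled uniformly across all such balls. Resolving this requires a careful interplay between the $\tau_d$-openness of balls (ensuring some $d$-neighborhood of $x$ fits inside $B_d(z,r)$) and the doubling inequality, used to transfer measure estimates between balls with different centers.
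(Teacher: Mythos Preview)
The paper does not contain a proof of this theorem. It is stated (in Subsection~\ref{Ss.homog-sp}) as a result quoted from Mac\'{\i}as and Segovia \cite{maci-sego79a}, with no argument supplied; the surrounding text only proves the remark that balls for a $b$-metric satisfying \eqref{eq2.maci} are $\tau_{d'}$-open. Consequently there is nothing in the paper to compare your proposal against.

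As a standalone sketch your strategy is the natural one and essentially matches the original proof in \cite{maci-sego79a}. One remark on the positivity step you flag as the main obstacle: the difficulty you describe can be resolved more directly than your outline suggests. If $x,y\in B_d(z,r)$ with $x\ne y$, then as you note $r>r_0/(2s)$ for any fixed $r_0<d(x,y)$. Since $d(x,z)<r$, one has $B_d(x,r)\sse B_d(z,2sr)$, and doubling gives $\mu(B_d(z,2sr))\le C\mu(B_d(z,r))$ with $C$ depending only on $s,a,\beta$. Hence
\[
\mu(B_d(z,r))\ge C^{-1}\mu(B_d(x,r))\ge C^{-1}\mu\!\left(B_d\!\left(x,\tfrac{r_0}{2s}\right)\right)>0,
\]
and the right-hand side is independent of the witnessing ball $B_d(z,r)$. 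You do not actually need to invoke the $\tau_d$-openness hypothesis here; doubling alone, applied at the center $z$, transfers the estimate to a fixed ball at $x$. The $\tau_d$-openness is used elsewhere (it guarantees that $\delta$-balls, being unions of $d$-balls, are $\tau_d$-open, which you need for $\tau_\delta\sse\tau_d$).
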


\subsection{Topological properties of $f$-quasimetric spaces} We present now, following \cite{arutyun17a} a very general class of  metric type spaces.
On a nonempty set $X$ consider a mapping $d:X\times X\to\Real_+$ satisfying only the condition
\bequ\label{def.dist}
d(x,y)=0\iff x=y\,,
\eequ
for all $x,y\in X$, and call such a function \emph{distance}. One can define open balls with respect to $d$ as usual
$$
B(x,r)=\{y\in X : d(x,y)<r\}\,,
$$
for $x\in X$ and $r>0$, and a topology $\tau_d$ by
$$
G\in\tau_d\overset{{\rm def}}{\iff} \forall x\in G,\;\exists r>0,\; B(x,r)\sse G\,,$$
where $G\sse X$.

The topology $\tau_d$ is $T_1$ because $X\sms\{x\}$ is open, and so $\{x\}$ closed. Indeed, if $y\ne x$, then $r:=d(y,x)>0$ and $x\notin B(y,r)$.

Along with the distance $d$ one can consider the \emph{conjugate distance} $\bar d(x,y)=d(y,x),\, x,y\in X$. The $\bar d$-balls are given by
$$
B_{\bar d}(x,r)=\{y\in X : \bar d(x,y)<r\}= \{y\in X :   d(y,x)<r\}\,,$$
and the corresponding topology is denoted by $\tau_{\bar d}.$

Consider now a function $f:\Real_+\times\Real_+\to\Real_+$ such that
\bequ\label{def.f}
 (t_1,t_2)\to  (0,0)\;\Lra \;  f(t_1,t_2)\to  (0,0)\,,
 \eequ
where  $(t_1,t_2)\in \Real_+\times\Real_+.$ we say that a distance $d$ on a set $X$ is an $f$-\emph{quasimetric} if it satisfies the inequality
 \bequ\label{ineq.f-tr}
 d(x,y)\le f(d(x,z),d(y,z))\,,
 \eequ
 for all $x,y,z\in X$.

\begin{example}\label{ex.f-fcs}
  We present first some important particular cases of function $f$.
 \bite
\item $f(t_1,t_2)=t_1+t_2$.  In this case $d$ is a quasimetric (see \cite{Cobzas}) and a metric if the distance $d$ is   symmetric.
\item $f(t_1,t_2)=s_1t_1+s_2t_2$, for some $s_1,s_2\ge 1.$ In this case $d$ is called an $(s_1,s_2)$-quasimetric (see \cite{arutyun17b}), a b-quasimetric if $s_1=s_2=s$,
respectively an $(s_1,s_2)$-metric, and b-metric if $d$ is symmetric.
\eite
\end{example}

From \ref{ineq.f-tr} one obtains the following result, called the \emph{asymptotic triangle inequality}:
\bequ\label{ineq.as-tr}
d(x_n,y_n)\to 0\mbox{ and } d(y_n,z_n)\to 0\;\Lra\; d(x_n,z_n)\to 0\,.\eequ

Conversely, if a distance functions satisfies \eqref{ineq.as-tr}, then there exists a function $f$, satisfying \eqref{def.f}, such that $d$ is an $f$-quasimetric.

Indeed, define $h:\Real_+\to\Real_+$  by
\beqs
h(t)=\sup\{d(u,v) : u,v\in X,\, \exists w\in X,\, d(u,w)+d(w,v)\le t\}\,.
\eeqs

The function  $h$ is obviously  nondecreasing and
$$\lim_{t\searrow 0}h(t)=0\,.$$

Indeed, if $t_n\to 0$, where  $t_n\in\Real_+,\, n\in\Nat,$ then there exist $u_n,v_n,w_n\in X$ such that
\begin{align*}
  d(u_n,w_n)+d(w_n,v_n)\le t_n\;\;\mbox{ and }\;\; d(u_n,v_n)>f(t_n)-\frac 1n\,,
\end{align*}
for all $n\in \Nat.$ The first inequality implies $d(u_n,w_n),d(w_n,v_n)\to 0,$ so that, by \eqref{ineq.as-tr},  $d(u_n,v_n) \to 0,$ which, by the second inequality from above, yields $f(t_n)\to 0.$

By the definition of the function $h$,
$$h(d(x,z)+d(z,y))\ge d(x,y)\,,$$
for all $x,y,z\in X$, so we can take $f(t_1,t_2)=h(t_1+t_2).$

Define the convergence of a sequence $(x_n)$ in a distance space $(X,d)$ to $x\in X$ by
$$
x_n\xrightarrow{d}x\overset{{\rm def}}{\iff} d(x,x_n)\to 0\,,
$$
and, for $Z\sse X$ and $x\in X$ put
$$
d(x,Z)=\inf\{d(x,z) : z\in Z\}\,.$$

We have the following useful characterizations of the interior and closure.
\begin{prop}\label{p1.Arutyun}
Let $(X,d)$ be an $f$-quasimetric space and $Z\sse X$. Then
\bequ\label{eq1.Arutyun}
  \inter (Z)=\{x\in X : d(x,X\sms Z)>0\}\;\mbox{ and }\; \clos(Z)=\{x\in X : d(x,A)=0\}\,.
  \eequ
\end{prop}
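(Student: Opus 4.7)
The plan is to prove the interior formula first and to deduce the closure formula by duality, applying the first identity to $X\sms Z$ and using $\clos(Z) = X \sms \inter(X\sms Z)$ together with $d \ge 0$. Put $V := \{x \in X : d(x, X\sms Z) > 0\}$.

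The inclusion $\inter(Z) \sse V$ is immediate from the definition of $\tau_d$: if $x \in \inter(Z)$, then some open set $G$ with $x \in G \sse Z$ contains a ball $B(x,r) \sse Z$, which forces $d(x,u) \ge r$ for every $u \in X\sms Z$ and hence $x \in V$. For the reverse inclusion, note first that $V \sse Z$, because $y \in V \cap (X\sms Z)$ would yield $0 < d(y, X\sms Z) \le d(y,y) = 0$. It therefore suffices to show that $V$ is itself $\tau_d$-open, for then $V \sse \inter(Z)$.

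The openness of $V$ is where the $f$-quasimetric hypothesis enters. Fix $y \in V$ and set $2\rho := d(y, X\sms Z) > 0$. By property \eqref{def.f}, choose $\delta > 0$ with $f(t_1,t_2) < 2\rho$ whenever $\max(t_1,t_2) < \delta$, and set $r' := \delta/2$. For any $z \in B(y,r')$ and any $u$ with $d(z,u) < \delta/2$, the inequality \eqref{ineq.f-tr} applied with intermediate point $z$ gives $d(y,u) \le f(d(y,z), d(z,u)) < 2\rho$, which forbids $u \in X\sms Z$. Consequently $B(z,\delta/2) \sse Z$, so $d(z, X\sms Z) \ge \delta/2 > 0$ and $z \in V$. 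Thus $B(y,r') \sse V$, proving $V$ open.

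The main obstacle is precisely this openness step: a ball $B(x,r)$ in an $f$-quasimetric space need not itself be $\tau_d$-open, so the bare inclusion $B(x,r) \sse Z$ does not automatically place $x$ in $\inter(Z)$. The inequality \eqref{ineq.f-tr} together with the continuity-at-zero property \eqref{def.f} of $f$ is precisely what lets one shrink the radius at a nearby point and still stay inside $Z$; without it the argument collapses, since the first inclusion $\inter(Z) \sse V$ already holds in any distance space.
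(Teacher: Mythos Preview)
Your proof is correct and follows essentially the same plan as the paper: establish that $V$ is open, that $V\sse Z$, and that $\inter(Z)\sse V$, then obtain the closure formula by complementation. The only difference is stylistic: for the openness of $V$ the paper argues by contradiction with sequences and invokes the asymptotic triangle inequality \eqref{ineq.as-tr}, whereas you give a direct $\epsic$--$\delta$ argument using \eqref{ineq.f-tr} and \eqref{def.f}; your version is slightly more quantitative but the content is the same.
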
\begin{proof} Let
$$
\tilde Z:=\{x\in X : d(x,X\sms Z)>0\}\,.$$

We show that
\begin{align*}
  &{\rm (i)}\quad \tilde Z\;\mbox{is open};\\
  &{\rm (ii)}\quad \tilde Z\sse Z;\\
   &{\rm (iii)}\quad \inter (Z)\sse\tilde Z\,,
   \end{align*}
which will imply that    $\inter (Z)=\tilde Z$.

Suppose that $\tilde Z$ is not open. Then there exists $x\in \tilde Z$ such that $B(x,n^{-1})\nsubseteq \tilde Z$ for all $n\in\Nat.$ Hence, for each $n\in\Nat,$ there exists
$y_n\in X$ such that
$$
d(x,y_n)<\frac1n\;\mbx{ and} \; d(y_n,X\sms Z)=0\,.$$
Then, for every $n\in\Nat,$ there exists
$w_n\in X\sms  Z$ such that $d(y_n,w_n)<1/n\,.$   By \eqref{ineq.as-tr}, $d(x,w_n)\to 0$, implying  $d(x,X\sms Z)=0$, in contradiction to the hypothesis that $x\in\tilde Z$.

The proof of (ii) is simple. If $x\notin Z$, then $x\in X\sms Z$, so that $d(x,X\sms Z)=0,$ that is, $x\notin\tilde Z$.

To prove (iii), suppose that $x\in\inter(Z)$. Then there exists $r>0$ such that $B(x,r)\sse\inter (Z)\sse Z.$ But then, for any $y\in X\sms Z,\, d(x,y)\ge r>0,$ that is,
$x\in \tilde Z$.

The proof of the formula for closure is based on the equality
$$
\clos(X\sms Y)=X\sms\inter(Y)\,,$$
valid for any subset $Y$ of $X$. Then
\begin{align*}
 \clos(Z)&=X\sms\inter(X\sms Z)
 =X\sms \{x\in X : d(x,Z)>0\}\\&=  \{x\in X : d(x,Z)=0\}\,.
\end{align*}
 \end{proof}

 Proposition \ref{p1.Arutyun} has some important consequences.
 \begin{corol}\label{c1.Arutyun} Let $(X,d)$ be an $f$-quasimetric space.
 \ben
 \item[\rm 1.] For every $x\in X$ and $r>0,\, x\in \inter\big(B(x,r)\big)$, or, equivalently, $B(x,r)$ is a neighborhood of $x$.
 \item[\rm 2.] The topology $\tau_d$ satisfies the first axiom of countability, i.e. every point has a countable base of neighborhood. Consequently, usual sequences suffices to characterize the topological properties of $X$.
 \item[\rm 3.]  The convergence of a sequence $(x_n)$ in $X$ to $x\in X$ with respect to $\tau_d$ is characterized in the following way:
 \bequ\label{eq1.d-converg}
 x_n\xrightarrow{\tau_d}x\iff   d(x,x_n)\lra 0\,.\eequ
\een\end{corol}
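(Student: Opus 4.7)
The plan is to derive all three items directly from Proposition \ref{p1.Arutyun}, since the characterization $\inter(Z)=\{x\in X : d(x,X\sms Z)>0\}$ is tailor-made to detect when a set is a neighborhood. The upshot is that the somewhat delicate issue of balls not being open in general $f$-quasimetric spaces is bypassed: even if $B(x,r)$ fails to be open, its interior always contains $x$, which is enough for each of the three claims.

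For item 1, I would simply observe that if $y\in X\sms B(x,r)$, then by definition $d(x,y)\ge r$, so $d(x,X\sms B(x,r))\ge r>0$. Proposition \ref{p1.Arutyun} then yields $x\in\inter(B(x,r))$, i.e. $B(x,r)$ is a neighborhood of $x$.

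For item 2, the natural candidate base at $x$ is $\{B(x,1/n) : n\in\Nat\}$, which by item 1 consists of neighborhoods of $x$. The remaining claim is that every neighborhood $U$ of $x$ contains some $B(x,1/n)$. Since $x\in \inter(U)$, Proposition \ref{p1.Arutyun} gives $r:=d(x,X\sms U)>0$. If $y\in B(x,r)$, then $y\notin X\sms U$ (otherwise $d(x,X\sms U)\le d(x,y)<r$, a contradiction), so $B(x,r)\sse U$; picking $n\in\Nat$ with $1/n<r$ finishes the argument. Once a countable base of neighborhoods is available at every point, sequences characterize the topology, which is a standard general topology fact.

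For item 3, the forward implication is immediate from item 1: if $x_n\xrightarrow{\tau_d}x$, then $B(x,\epsic)$ being a neighborhood of $x$ forces $x_n\in B(x,\epsic)$ eventually, i.e. $d(x,x_n)<\epsic$ eventually. Conversely, if $d(x,x_n)\to 0$ and $U$ is any neighborhood of $x$, the argument from item 2 supplies $r>0$ with $B(x,r)\sse U$, and then $d(x,x_n)<r$ eventually yields $x_n\in U$ eventually. Hence $x_n\xrightarrow{\tau_d}x$.

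The main (mild) obstacle is the step in item 2 that turns $d(x,X\sms U)>0$ into a genuine ball-inclusion $B(x,r)\sse U$; this is purely a distance-level argument using only \eqref{def.dist} and the definition of $B(x,r)$, and does not require the asymptotic triangle inequality \eqref{ineq.as-tr} or the $f$-quasimetric structure beyond what was already consumed in proving Proposition \ref{p1.Arutyun}. Everything else is bookkeeping.
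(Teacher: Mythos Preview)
Your proof is correct and follows essentially the same approach as the paper. The only cosmetic difference is in item 2: where you invoke Proposition \ref{p1.Arutyun} a second time to obtain $r=d(x,X\sms U)>0$ and then argue $B(x,r)\sse U$, the paper instead appeals directly to the definition of $\tau_d$ (for $x\in G\in\tau_d$ there is by definition some $r>0$ with $B(x,r)\sse G$); both routes yield the needed ball-inclusion and the remaining arguments for items 1 and 3 are identical.
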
\begin{proof}
  1. This follows  from the following relations
  \begin{align*}
    y\in X\sms B(x,r)&\iff d(x,y)\ge r\\
    &\;\,\Lra \, d(x,X\sms B(x,r))\ge r>0\\
    &\iff x\in\inter \big(B(x,r)\big)\,.
  \end{align*}

  2. A countable base of neighborhoods of a point $x\in X$ is $B(x,n^{-1}),\, n\in\Nat.$  If $V$ is a neighborhood of $x$, then there exists   $G\in\tau_d$ such that
  $$ x\in G\sse V\,.$$

  By the definition of the topology $\tau_d$, there exists $r>0$ such that $ B(x,r)\sse G,$ implying $$B(x,n^{-1})\sse B(x,r)\sse G\sse V\,,$$
  for some sufficiently large $n\in\Nat.$

  3. Suppose that $x_n\xrightarrow{\tau_d}x $  and let $r>0$. Then, by 1,  $B(x,r)\in\rd V(x)$ so there exists   $n_r\in\Nat$ such that $x_n\in B(x,r)$, or, equivalently, $d(x,x_n)<r$ for all $n\ge n_r.$ Consequently, $d(x,x_n)\lra  0.$

  Suppose now that $d(x,x_n)\lra 0$ and let $V\in\rd V(x).$ Then $x\in\inter(V)\in\tau_d$, so there exists $r>0$ such that $ B(x,r)\sse\inter(V)\sse V.$ By hypothesis, there exists
  $n_0\in\Nat$, such that $d(x,x_n)<r$ for all $n\ge n_0$, implying $x_n\in B(x,r)\sse V$ for all $n\ge n_0.$
\end{proof}

\begin{remark} The convergence with respect to the topology $\tau_{\bar d}$ generated by the conjugate $f$-quasimetric $\bar d(x,y)=d(y,x),\, x,y\in X,$ is characterized by
\bequ\label{eq2.d-converg}
 x_n\xrightarrow{\tau_{\bar d}}x\iff d(x_n,x)\lra 0\,.\eequ
\end{remark}

Since $x\in \inter\big(B(x,r)\big),$ there exists $r'>0$ such that $B(x,r')\sse\inter\big(B(x,r)\big)\sse B(x,r).$ A natural question is to find the biggest $r'$ such that
$B(x,r')\inter\big(B(x,r)\big).$

For $r>0$ let
$$
\Lbda(r)=\{t_1\ge 0 :\limsup_{t_2\searrow 0}f(t_1,t_2)\ge r\}\,,$$ and
$$
\theta(r)=\begin{cases}
  \sup\Lbda(r) &\mbox{ if } \; \Lbda(r)\ne \emptyset \\
  r &\mbox{ if } \; \Lbda(r)= \emptyset\,.
\end{cases} $$

Here, by definition,
\bequ\label{limsup} \limsup_{t_2\searrow 0}f(t_1,t_2)=\inf_{\delta>0}\big(\sup\{f(t_1,t_2) : 0\le t_2<\delta\}\big)\,.\eequ

\begin{remark}\label{re1.Arutyun}
  Observe that $\Lbda(r)=\emptyset$ implies $B(x,r)=X$.

  Also $\theta(r)>0$ in both cases.
\end{remark}

Indeed, if $\Lbda(r)=\emptyset,$ then putting $t_1=d(x,y)$ for some arbitrary $y\in X$, it follows $\limsup_{t_2\searrow 0}f(t_1,t_2)<r$, so there exists $\delta >0$ such that
$\sup\{f(t_1,t_2) : 0\le t_2<\delta\}<r$, implying
$$
d(x,y)\le f(d(x,y),d(y,y))=f(t_1,0)<r\,,$$
that is, $y\in B(x,r).$

If $\theta(r)=0$, then  $\Lbda(r)\ne\emptyset,$ so there exists a sequence $(t_1^k)$ in $\Lbda(r)$ such that $t^k_1\to 0$ as $k\to\infty.$
By \eqref{limsup} and the definition of $\Lbda(r)$, for every $k\in\Nat$ there exists $0\le t^k_2<1/k$ such that $f(t^k_1,t^k_2)>r/2.$
Taking into account \eqref{def.f}, one obtains the contradiction.
$$
0=\lim_{k\to\infty} f(t^k_1,t^k_2)\ge r/2\,.$$

\begin{prop}\label{p2.Arutyun}  Let $(X,d)$ be an $f$-quasimetric space.
Then
$$
B(x,\theta(r))\sse\inter\big(B(x,r)\big)\,,$$
for every $x\in X$ and $r>0$.
 \end{prop}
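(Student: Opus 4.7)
My plan is to invoke the characterization of the interior from Proposition \ref{p1.Arutyun}: a point $y$ belongs to $\inter(B(x,r))$ if and only if $d(y,X\sms B(x,r))>0$. I fix an arbitrary $y\in B(x,\theta(r))$, set $t_1:=d(x,y)$ (so $t_1<\theta(r)$), and argue by contradiction that $d(y,X\sms B(x,r))>0$.

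Suppose for contradiction this distance equals $0$. Then there is a sequence $(z_n)\subset X\sms B(x,r)$ with $d(y,z_n)\to 0$; by the definition of the ball, $d(x,z_n)\ge r$ for every $n$. The first technical point is the asymmetry of $d$: to bound $d(x,z_n)$ via \eqref{ineq.f-tr} with middle point $y$, it is $d(z_n,y)$ that must tend to $0$, not $d(y,z_n)$. I convert one into the other by applying \eqref{ineq.f-tr} to the triple $(z_n,y,z_n)$, which yields
\[
d(z_n,y)\le f\bigl(d(z_n,z_n),\,d(y,z_n)\bigr)=f\bigl(0,\,d(y,z_n)\bigr),
\]
and the right-hand side tends to $0$ by the joint continuity property \eqref{def.f} of $f$ at the origin. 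Hence $s_n:=d(z_n,y)\to 0$.

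Next I apply \eqref{ineq.f-tr} to $(x,z_n,y)$, obtaining $r\le d(x,z_n)\le f(d(x,y),d(z_n,y))=f(t_1,s_n)$ for every $n$. Since $s_n\to 0$, this forces $\sup_{0\le t_2<\delta}f(t_1,t_2)\ge r$ for every $\delta>0$, and unwinding the formula \eqref{limsup} yields $\limsup_{t_2\searrow 0}f(t_1,t_2)\ge r$. Thus $t_1\in\Lbda(r)$. The defining property of $\theta(r)$ as the critical threshold separating $\Lbda(r)$ from the region in which $y$ is still interior then forces $t_1\ge\theta(r)$, contradicting $t_1<\theta(r)$. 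Therefore $d(y,X\sms B(x,r))>0$, and by Proposition \ref{p1.Arutyun}, $y\in\inter(B(x,r))$.

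The core of the argument is a double application of the $f$-quasimetric inequality combined with \eqref{def.f}. The main obstacle is the asymmetry of $d$, which prevents the sequence $(z_n)$ producing $d(y,z_n)\to 0$ from being directly plugged into the triangle-type bound on $d(x,z_n)$; the auxiliary estimate $d(z_n,y)\le f(0,d(y,z_n))$ is the trick that removes this obstacle. The only other delicate point is the final contradiction, which is really an unwinding of how $\theta(r)$ is set up to separate membership in $\Lbda(r)$ from non-membership.
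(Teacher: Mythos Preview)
Your argument is correct, and at heart it is the contrapositive of the paper's direct proof. The paper argues forward: from $t_1=d(x,y)<\theta(r)$ it deduces $t_1\notin\Lambda(r)$, picks $\delta>0$ with $\sup_{0\le t_2<\delta}f(t_1,t_2)<r$, and then shows the ball inclusion $B(y,\delta)\subseteq B(x,r)$, so that $B(x,r)$ is a neighborhood of $y$. You instead assume $d(y,X\setminus B(x,r))=0$, extract a witnessing sequence, and arrive at $t_1\in\Lambda(r)$, a contradiction. The substantive difference is your explicit treatment of the asymmetry of $d$: the paper's verification of $B(y,\delta)\subseteq B(x,r)$ invokes $d(x,z)\le f(d(x,y),d(y,z))$, but substituting into \eqref{ineq.f-tr} actually gives $d(x,z)\le f(d(x,y),d(z,y))$, and $z\in B(y,\delta)$ only controls $d(y,z)$, not $d(z,y)$. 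Your bridging estimate $d(z_n,y)\le f(d(z_n,z_n),d(y,z_n))=f(0,d(y,z_n))\to 0$ is precisely the device needed to close this gap, and it transplants directly into the paper's approach as well (choose $\delta'$ so that $d(y,z)<\delta'$ forces $f(0,d(y,z))<\delta$, hence $d(z,y)<\delta$). So your route buys an honest handling of the asymmetric case at the price of the sequential/contradiction machinery, while the paper's route is shorter but, as written, glosses over the asymmetry at the key step.
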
\begin{proof} By Remark \ref{re1.Arutyun} we can suppose $\Lbda(r)\ne\emptyset.$

Let $y\in B(x,\theta(r)).$ Then   $t_1:=d(x,y)<\theta(r),$ implying $t_1\notin\Lbda(r)$ so that $\limsup_{t_2\searrow 0}f(t_1,t_2)<r.$ By \eqref{limsup}
 there exists $\delta>0$ such that
 \bequ\label{eq3a.Arutyun}\sup\{f(t_1,t_2) : 0\le t_2<\delta\}<r\,.\eequ

 We show that \bequ\label{eq3b.Arutyun} B(y,\delta)\sse B(x,r)\,.\eequ

 If $z\in B(y,\delta), $ then, by \eqref{eq3a.Arutyun}
 $$
 d(x,z)\le f(d(x,y),d(y,z))=f(t_1,d(y,z))<r\,,$$
 because $d(y,z)<\delta.$

 Since  $B(y,\delta)$ is a neighborhood of $y$, the inclusion \eqref{eq3b.Arutyun} shows that $B(x,r)$ is also a neighborhood of $y$, that is, $y\in\inter\big(B(x,r)\big).$
   \end{proof}

   \begin{remark} If $(X,d)$ is an $(s_1,s_2)$-quasimetric space, i.e. an $f$-quasimetric space for $f(t_1,t_2)=s_1t_1+s_2t_2$, then
   $$
   \theta (r)=r/s_1\,.$$
   \end{remark}

   Indeed,
\begin{align*}
   \theta(r)&=\inf\{t_1\ge 0 : \lim_{t_2\searrow 0}(s_1t_1+s_2t_2)\ge r\}\\
  &=\inf\{t_1\ge 0 : s_1t_1\ge r\}=r/s_1\,.
\end{align*}

The authors define in \cite{arutyun17a} the notion  of Cauchy sequence and completeness. A sequence in a distance space $(X,d)$ is called \emph{Cauchy} if for every $\epsic >0$ there exists $n_0=n_0(\epsic)$ such that
\bequ\label{Cauchy-l}
d(x_n,x_{n+k})<\epsic\,,
\eequ
for all $n\ge n_0$ and all $k\in\Nat.$ The distance space $(X,d)$ is called \emph{complete} if every Cauchy sequence is convergent to some $x\in X$.

The authors prove in \cite{arutyun17a} the validity of Baire category theorem in complete $f$-quasimetric spaces which satisfy the separation axiom $T_3$  (i.e.  are regular). As in the metric case, the proof is based on the nonemptiness of descending sequences  of closed balls with radii tending to 0. They extend  the metrization Theorems \ref{t.Frink} and \ref{t.Stemp} to this setting proving the quasimetrizability of $f$-quasimetric spaces. In this case, the equivalent quasimetrics are also given by the formulae \eqref{def.1-metric} and \eqref{def.p-metric} and are denoted by Inf $d$. They introduce the notion of weak symmetry of the $f$-quasimetric $d$ by the condition
\bequ\label{eq.w-sym}
d(x,x_n)\to 0\;\Lra\; d(x_n,x)\to 0\,,\eequ
for all sequences $(x_n)$ in $X$ and $x\in X$. The topology  generated by a weakly symmetric $f$-quasimetric is normal and
 metrizable.
\begin{remark}
 By \eqref{eq1.d-converg} and  \eqref{eq2.d-converg}, the condition  \eqref{eq.w-sym} means that the identity mapping $I:(X,\tau_d)\to (X,\tau_{\bar d})$ is continuous, or equivalently, $\tau_{\bar d}\sse\tau_d,$ i.e. the topology $\tau_{d}$ is finer than $\tau_{\bar d}$.
\end{remark}

\begin{remark} In the theory of quasimetric spaces (see Example \ref{ex.f-fcs}) a sequence satisfying \eqref{Cauchy-l} is called ``left $K$-Cauchy"   and the corresponding notion of completeness, ``left $K$-completeness". If the  sequence $(x_n)$ satisfies the condition
\bequ\label{Cauchy-r}
d(x_{n+k},x_{n})<\epsic\,,
\eequ
for all $n\ge n_0$ and all $k\in\Nat,$ then it is called ``right $K$-Cauchy",   and the corresponding notion of completeness, ``right  $K$-completeness".

Some authors call   a sequence $(x_n)$ satisfying \eqref{Cauchy-l} ``forward Cauchy" and ``backward Cauchy" if it satisfies \eqref{Cauchy-r}. Also  the convergence given by $d(x,x_n)\to 0$ is called  ``backward convergence", while that given by   $d(x_n,x)\to 0$ is called  ``forward convergence" (see, e.g. \cite{menuci13}). Combining these notions of  Cauchy sequence and convergence one obtains various notions of completeness: ``forward-forward complete" meaning that every forward Cauchy sequence is forward convergent, with similar definitions for forward-backward,   backward-forward, etc  -- completeness.

Due to the asymmetry of the quasimetric,  there are several  notions of Cauchy sequence (actually 7, see \cite{reily-subram82}), each of them agreeing with the usual notion of Cauchy (fundamental) sequence in the metric case. Considering $d$-convergence and $\bar d$-convergence, from  these 7 notions of Cauchy sequence one obtains 14  notions of completeness (see the  book \cite{Cobzas}).
\end{remark}

 \subsection{Historical remarks and further results}  The relaxed triangle inequality and the corresponding spaces were rediscovered several times under various names -- quasi-metric, near metric (in \cite{Deza}),  metric type, etc.
 \bite
\item  (1970)   Coifman and de Guzman \cite{coif-guzman70} in connection with some problems in harmonic analysis (a b-metric is called ``distance" function);
 \item (1979) the results of Coifman and de Guzman  were completed by Mac\'{\i}as and Segovia  \cite{maci-sego79a,maci-sego79b};
  \item (1989) Bakhtin \cite{bahtin89} called them ``quasi-metric spaces" and proved a contraction principle for such spaces;
   \item (1993) Czerwik introduced them under the name ``b-metric space", first for $s=2$ in \cite{czerw93}, and then for an arbitrary $s\ge 1$ in \cite{czerw98}, with applications to fixed points;
       \item (1998,2003) Fagin \emph{et al.} \cite{fagin03,fagin98} considered distances satisfying the $s$-relaxed triangle and polygonal inequalities with applications to some problems in theoretical computer science;
\item (2010)  Khamsi \cite{khamsi10a} introduced them under the name ``metric type spaces" and  remarked that if $D$ is a cone metric on a set $X$ with values in a Banach space ordered by a normal cone $C$ with normality constant $K$, then $d(x,y)=\|D(x,y)\|,\, x,y\in X,$ is a b-metric on $X$ satisfying the $K$-relaxed polygonal inequality.
  \eite

     Some topological properties  of  b-metric spaces (e.g. compactness) were studied in \cite{khamsi10b}. Xia \cite{xia09} studied the properties of the space $C(T,X)$ of continuous functions from a compact metric space $T$ to a b-metric space $X$, and  geodesics and intrinsic metrics in b-metric  spaces. The results were applied to show that   the optimal transport paths between atomic probability measures are  geodesics in the intrinsic metric. An, Tuyen and Dung \cite{an-dung15} extended to b-metric spaces Stone's paracompactness theorem.

 \section{Generalized b-metric spaces}

The notions of \emph{generalized metric}, meaning  a mapping $d:X\times X\to [0,\infty]$ satisfying the axioms of a metric, and generalized metric space $(X,d)$  were introduced by W. A. J. Luxemburg in \cite{lux1}--\cite{lux3} in connection with the method of successive approximation and fixed points. These results   were completed by A. F. Monna \cite{monna61} and M. Edelstein \cite{edelst64}.  Further results were obtained by  J. B. Diaz and B. Margolis \cite{diaz-margol68,margolis68} and C. F. K.  Jung \cite{jung69}.    G. Dezs\H{o} \cite{dezso00}  considered  generalized vector metrics, i.e. metrics with values in $\Real^m_+\cup\{(+\infty)^m\}$, and extended to this setting  Perov's fixed point theorem  (see \cite{perov64} -- \cite{perov66}) as well as other fixed point results (Luxemburg, Jung, Diaz-Margolis,  Kannan).

For some recent results on generalized metric spaces see \cite{beer13} and \cite{czerw-krol16b}.
Recently, G. Beer and J. Vanderwerf \cite{beer15}--\cite{beer-vdw15a}  considered  vector spaces equipped with norms that  can take infinite values, called   ``extended norms" (see also \cite{czerw-krol16a}).

Following these ideas, we consider here the notion of \emph{generalized} b-\emph{metric}  on a nonempty set $X$ as a mapping $d:X\times X\to [0,\infty]$ satisfying the conditions (i)--(iii) from (\ref{def.b-metric}). If  $d$      satisfies further the condition  \eqref{s-b-metric}, then $d$
is called a \emph{generalized strong} b-\emph{metric} and the pair $(X,d)$ a \emph{generalized strong} b-\emph{metric space}.

Let $(X, d)$ be a generalized   b-metric space. As in Jung \cite{jung69}, it follows that
\begin{equation}\label{zad3}
x \sim y  \stackrel{d}{\Longleftrightarrow} d(x,y) < + \infty, \quad x,y \in X,
\end{equation}
is an equivalence relation on $X$. Denoting by $X_i, \, i \in I$, the equivalence classes corresponding to $\sim$ and putting $d_i = d |_{X_i \times X_i}, \, i \in I$, then  $(X_i, d_i)$ is a b-metric space (a strong b-metric space if $(X,d)$ is a generalized strong b-metric space) for every $i\in I$. Therefore, $X$ can be  uniquely decomposed into equivalence classes $X_i, \, i \in I$, called the \emph{canonical decomposition} of $X$.

By analogy to   \cite{jung69} we have.

\begin{theo}\label{t.2}
Let $(X,d)$ be a generalized  b-metric space and $X_i, \, i \in I,$ its   canonical decomposition. Then the following hold.

\begin{enumerate}
    \item[\rm 1. ] The space  $(X,d)$ is complete if and only if  $(X_i,d_i)$ is complete for every $i \in I$.
    \item[\rm 2. ] If  $\,(Y_i, d_i), \, i \in I$, are   b-metric spaces (with the same $s$) and $Y_i \cap Y_j = \emptyset  $ for all $i \neq j$  in $I$, then

\begin{equation}\label{zad4}
d(x,y) := \left\{ \begin{array}{lcl}
d_i(x,y) & \mbox{if} & x,y \in Y_i, \text{ for some } i \in I, \\
 +\infty & \mbox{if} & x \in Y_i \text{ and } y \in Y_j \\
& & \text{ for some } i,j \in I \text{ with } i \neq j, \\
\end{array}\right.
\end{equation}
is a generalized  b-metric on $Y = \bigcup_{i \in I} Y_i$, with $\{Y_i : i\in I\}$   the family of equivalence classes corresponding to the equivalence relation (\ref{zad3}).
\end{enumerate}

The same results are true for generalized  strong  b-metric spaces.
\end{theo}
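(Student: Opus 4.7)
The plan is to handle the two parts essentially independently. Both rely on the characterization \eqref{zad3} of the equivalence relation, which says that within an equivalence class distances are finite, and between distinct classes they are $+\infty$.

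For Part 1, I would first show that completeness of every $(X_i,d_i)$ implies completeness of $(X,d)$. Let $(x_n)$ be a Cauchy sequence in $(X,d)$. Since $d(x_n,x_m)\to 0$ as $n,m\to\infty$, there exists $n_0$ such that $d(x_n,x_m)<\infty$ for all $n,m\ge n_0$, which by \eqref{zad3} forces $x_n\sim x_m$, so the tail $(x_n)_{n\ge n_0}$ lies in a single class $X_{i_0}$. This tail is then a $d_{i_0}$-Cauchy sequence in the complete space $(X_{i_0},d_{i_0})$, hence converges to some $x\in X_{i_0}\subseteq X$, and convergence in $X_{i_0}$ coincides with convergence in $X$ (since $d$ restricted to $X_{i_0}\times X_{i_0}$ is $d_{i_0}$). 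For the converse, let $(x_n)$ be $d_i$-Cauchy in $X_i$; it is clearly $d$-Cauchy in $X$, so converges to some $x\in X$. From $d(x_n,x)\to 0$ we get $d(x_n,x)<\infty$ for large $n$, so $x\sim x_n$ and therefore $x\in X_i$, giving $d_i$-convergence in $X_i$.

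For Part 2, I would verify axioms (i)--(iii) of \eqref{def.b-metric} by a case analysis based on whether the points lie in a common $Y_i$ or not. Axioms (i) and (ii) follow at once from the disjointness and the corresponding properties of each $d_i$. For the $s$-relaxed triangle inequality $d(x,y)\le s[d(x,z)+d(z,y)]$, the interesting cases split as follows. If $x,y\in Y_i$ and $z\in Y_i$, the inequality reduces to the known one for $d_i$. If $x,y\in Y_i$ but $z\in Y_j$ with $j\ne i$, then $d(x,z)=+\infty$, so the right-hand side is $+\infty$ and the inequality is trivial. Finally, if $x\in Y_i$ and $y\in Y_j$ with $i\ne j$, then for any $z\in Y$ we have $z\notin Y_i$ or $z\notin Y_j$, so at least one of $d(x,z)$, $d(z,y)$ equals $+\infty$, making the right-hand side $+\infty$ again. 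The identification of the $Y_i$ with the $\sim$-classes is then immediate from \eqref{zad3}: $x\sim y$ iff $d(x,y)<\infty$ iff $x$ and $y$ lie in a common $Y_i$.

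The strong b-metric case is handled by the same case split applied to the inequality \eqref{s-b-metric}; the asymmetry between the two terms on the right never causes trouble because whenever the triangle involves points from distinct classes, the right-hand side is $+\infty$ regardless of which coefficient is attached to which term. Thus the only nontrivial verification reduces to the strong b-metric inequality inside a single class, which is assumed.

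There is no genuine obstacle in this proof; the main care needed is in Part 2, where one must be systematic with the case distinction to confirm that the relaxed triangle inequality is not accidentally sharp in a mixed-class configuration. The observation that for any triangle $(x,y,z)$ with $x\not\sim y$ at least one leg has infinite length is the key structural fact that makes the construction work.
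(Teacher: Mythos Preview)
Your proposal is correct and complete. The paper itself does not actually prove this theorem: it is stated without proof, prefaced only by ``By analogy to \cite{jung69} we have,'' so there is no argument in the paper to compare against. Your case analysis for Part~2 and the tail-in-one-class argument for Part~1 are exactly the standard route one would expect, and your treatment of the strong b-metric variant is also correct.
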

\subsection{The completion of generalized b-metric spaces} In this subsection we shall prove the existence of the completion of strong b-metric spaces. The existence of the completion of a generalized metric space was proved in \cite{czerw-krol16}.

We start with the following  lemma.

\begin{lemma}\label{le1}
Let  $(X,d)$ be a generalized  b-metric space, ($Z,D$) a complete generalized   b-metric space, with continuous generalized b-metrics $d,D$ and  $Y$ a dense subset of $X$. Then for every    isometric embedding $f: Y \rightarrow Z$  there exists a unique isometric embedding $F: X \rightarrow Z$ such that $F|_Y = f$. If, in addition, $X$ is  complete and $f(Y)$ is dense in $Z$, then $F$ is bijective (i.e. $F$ is an isometry of $X$ onto $Z$).
\end{lemma}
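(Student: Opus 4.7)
The plan is to mimic the classical construction of a uniformly continuous extension, carried out per equivalence class of the canonical decomposition so that the extended values of $\infty$ cause no trouble, and exploiting the standing hypothesis that both generalized b-metrics $d$ and $D$ are continuous functions on $X\times X$ and $Z\times Z$ respectively.

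First, for each $x \in X$ I fix a sequence $(y_n)$ in $Y$ with $d(y_n,x)\to 0$, which exists by density. Because $d(y_n,x)<\infty$ eventually, the $y_n$ and $x$ lie in a common equivalence class $X_i$ of the canonical decomposition (Theorem \ref{t.2}), so $(y_n)$ is a Cauchy sequence in the ordinary b-metric space $(X_i,d_i)$ via the $s$-relaxed triangle inequality. Applying the isometry $f$, $(f(y_n))$ is then Cauchy in $Z$ (in fact in a single equivalence class of $Z$), and by the completeness of $Z$ (equivalently, the completeness of each $Z_j$) it has a limit $z\in Z$; set $F(x):=z$.

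Next I verify the required properties.
\textbf{(Well-definedness.)} If $(y'_n)$ is another such sequence in $Y$ then, using the continuity of $d$, $d(y_n,y'_n)\to d(x,x)=0$, whence $D(f(y_n),f(y'_n))=d(y_n,y'_n)\to 0$, and the continuity of $D$ forces the two limits to coincide.
\textbf{(Extension property.)} Taking the constant sequence $y_n=y$ for $y\in Y$ gives $F(y)=f(y)$.
\textbf{(Isometry.)} Given $x,x'\in X$ with approximating sequences $(y_n)$, $(y'_n)$ in $Y$, continuity of $d$ yields $d(y_n,y'_n)\to d(x,x')$, while continuity of $D$ gives $D(f(y_n),f(y'_n))\to D(F(x),F(x'))$; since the two sequences of distances are identical, $D(F(x),F(x'))=d(x,x')$ (with both sides possibly $+\infty$ but actually both finite, since $y_n,y'_n$ eventually lie in a common class only if $x,x'$ do).
\textbf{(Uniqueness.)} Any isometric extension $G$ is automatically continuous (distance-preserving), so $G(x)=\lim G(y_n)=\lim f(y_n)=F(x)$.

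Finally, assume $X$ is complete and $f(Y)$ is dense in $Z$. Injectivity of $F$ is immediate from the isometry property. For surjectivity, given $z\in Z$, density provides $y_n\in Y$ with $D(f(y_n),z)\to 0$; then $(f(y_n))$ is Cauchy in $Z$, so $(y_n)$ is Cauchy in $X$ by the isometry, and completeness of $X$ gives $y_n\to x$ for some $x\in X$. By the construction of $F$, $F(x)=\lim f(y_n)=z$.

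The main obstacle, and the only place the proof diverges meaningfully from the classical metric setting, is the isometry step: in a general b-metric space $d$ need not be continuous, so one could not pass to the limit in the two-variable distance function. Here this is circumvented by the explicit continuity hypothesis on $d$ and $D$, which makes the limit arguments in the well-definedness and isometry steps legal. A secondary bookkeeping point is the handling of $\infty$-valued distances, but this is resolved at the outset by observing that any approximating sequence for $x$ eventually lies in the same equivalence class as $x$, so all the limit arguments take place entirely inside a single (ordinary, finite-valued) b-metric space.
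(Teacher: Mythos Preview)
Your proof is correct and follows essentially the same line as the paper's: pick an approximating sequence in $Y$, push it through $f$, use completeness of $Z$ to define $F(x)$, and then appeal to the assumed continuity of $d$ and $D$ to verify well-definedness, the isometry property, and surjectivity in the complete case. The only (minor) differences are that you make the canonical decomposition explicit in order to handle the $+\infty$ values cleanly, and you include the short uniqueness argument, which the paper asserts but does not write out.
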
\begin{proof}
 For the sake of completeness we include the simple proof of this result.
For $x\in X$ let $(y_n)$ be a sequence in $Y$ such that $d(y_n,x)\to 0.$ Then $(y_n)$ is a Cauchy sequence in $(X,d)$ and the equalities $D(f(y_n),f(y_m))=d(y_n,y_m),\, m,n\in\mathbb{N},$ show that $(f(y_n))$ is a Cauchy sequence in $(Z,D)$. Since $(Z,D)$ is complete, there exists $z\in Z$ such that $D(f(y_n),z)\to 0$. If $(y'_n)$ is another sequence in $Y$ converging to $x$, then $(f(y'_n)) $ will converge to an element $z'\in Z$. By the continuity of the generalized b-metrics $d$ and $D$,
$$D(z,z')=D(\lim_nf(y_n),\lim_nf(y'_n))= \lim_nD(f(y_n),f(y'_n))= \lim_nd(y_n,y'_n)=0\,,$$
showing that $z=z'$. So we can unambiguously define a mapping $F:X\to Z$ by $F(x)=\lim_nf(y_n)$, where $(y_n)$ is a sequence in  $Y$ converging to $x\in X$.
For $y\in Y$ taking $y_n=y,\, n\in\mathbb{N}$, it follows $F(y)=y$.

For $x,x'\in X$, let $(y_n), (y'_n)$ be sequences in $Y$ converging to $x$ and $x'$, respectively. Then
$$D(F(x),F(x'))=\lim_nD(f(y_n),f(y'_n))= \lim_nd(y_n,y'_n)=d(x,x')\,,$$
i.e. $F$ is an isometric embedding.

If $f(Y)$ is dense in $Z$, then, for any $z\in Z$, there exists a sequence $(y_n)$ in $Y$ such that $ D(f(y_n),z)\to 0. $ It follows that $(f(y_n))$ is a Cauchy sequence in $Z$ and so, as $f$ is an isometry,   $(y_n)$ will be a Cauchy sequence in $X$.  As the space $X$ is complete, $(y_n)$ is convergent to some $x\in X$. But then
$$
D(F(x),z)=\lim_nD(F(x),f(y_n))=\lim_n d(x,y_n)=0\,,$$
showing that $F(x)=z.$
\end{proof}

\begin{remark}
  The proof can be adapted to show that, under the hypotheses of Lemma \ref{le1}, every  uniformly continuous mapping $f:Y\to Z$ has a unique uniformly continuous extension to $X$. The notion of uniform continuity for mappings between generalized b-metric spaces is defined as in the metric case.
\end{remark}

Let $(X,d)$ be a generalized strong b-metric space with $X_i, \, i \in I$, the family of equivalence classes corresponding to (\ref{zad3}). For every $i \in I$, let $(Y_i, D_i)$ be a completion of the strong b-metric space  $(X_i,d_i)$. Denote by $T_i : (X_i, d_i) \rightarrow (Y_i, D_i)$ the isometric embedding with $T_i (X_i)$ $D_i$-dense in $Y_i$ corresponding to this completion.

Replacing, if necessary, $Y_i$ with $\overline{Y_i} = Y_i \times \{ i\}, D_i$ with $\overline{D_i} ((x,i),(y,i))=D_i(x,y),$ for $ x,y \in Y_i,$ and putting $\overline{T_i}(x,i) = (T_i(x), i),\, x\in Y_i$, we may suppose, without restricting the  generality, that
$$ Y_i \cap Y_j = \emptyset \text{ for all } i,j \in I \text{ with } i \neq j\,. $$

Put $Y := \bigcup_{i \in I} Y_i$, and define
$$ D: Y \times Y \rightarrow [0, \infty] $$

\noindent
according to (\ref{zad4}) and $T: X \rightarrow Y$ by
$$ T(x) := T_i(x), $$
where $i$ is the unique element of $I$ such that $x \in X_i$.

We have the following result.

\begin{theo}\label{theorem-3}
Let $(X,d)$ be a generalized strong b-metric space and $(Y, D)$ the generalized strong b-metric space defined above. Then

\begin{enumerate}
    \item[\rm (i) ] $(Y, D)$ is a complete generalized strong  b-metric space;
    \item[\rm (ii) ] $T: (X,d) \rightarrow (Y, D)$ is an isometric embedding with $T(X)$ $D$-dense in $Y$;
    \item[\rm  (iii) ] any other complete generalized  strong b-metric space $(Z,\varrho )$   that contains a $\rho$-dense isometric copy of $(X,d)$, is isometric to $(Y,D)$.
\end{enumerate}
\end{theo}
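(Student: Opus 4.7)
The plan is to reduce everything to the completion result for ordinary strong b-metric spaces already proved in Theorem~\ref{t.compl-bm} and to the gluing construction supplied by Theorem~\ref{t.2}, and then to deduce uniqueness from the universal extension property of Lemma~\ref{le1}.

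For part (i), I would apply Theorem~\ref{t.compl-bm} to each equivalence class $(X_i,d_i)$, $i\in I$, to obtain completions $(Y_i,D_i)$; since all the $d_i$ satisfy the same strong b-metric inequality with the constant $s$ coming from $d$, the standard construction of the completion yields $D_i$ with the same constant $s$. After the disjointing trick indicated before the statement, the hypotheses of Theorem~\ref{t.2}(2) are met, so the formula \eqref{zad4} defines a generalized strong b-metric $D$ on $Y=\bigcup_iY_i$; completeness of each $(Y_i,D_i)$ together with Theorem~\ref{t.2}(1) then yields completeness of $(Y,D)$, proving (i).

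For part (ii), on each $X_i$ the map $T$ coincides with $T_i$, so $D(T(x),T(y)) = D_i(T_i(x),T_i(y)) = d_i(x,y) = d(x,y)$ whenever $x,y$ lie in the same class, while if $x\in X_i$ and $y\in X_j$ with $i\ne j$, both $D(T(x),T(y))$ and $d(x,y)$ equal $+\infty$; hence $T$ is an isometric embedding. Density of $T(X)$ in $Y$ follows from the density of each $T_i(X_i)$ in $Y_i$: every $y\in Y$ lies in exactly one $Y_i$, and a sequence in $T_i(X_i)$ approximating $y$ in $D_i$ approximates it equally well in $D$.

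For part (iii), I would apply Lemma~\ref{le1}. Given a competitor $(Z,\varrho)$ with an isometric embedding $j:X\to Z$ whose image is $\varrho$-dense, set $f := j\circ T^{-1}:T(X)\to Z$, which is an isometric embedding from the $D$-dense subset $T(X)$ of the complete space $(Y,D)$ into the complete space $(Z,\varrho)$. Strong b-metrics are continuous by \eqref{eq1.sbm}, and this continuity transfers to the generalized strong b-metrics $D$ and $\varrho$ because any convergent (or Cauchy) sequence must eventually stay inside a single equivalence class, where the ordinary strong b-metric lives. Thus Lemma~\ref{le1} produces a unique isometric extension $F:Y\to Z$ of $f$, and because $(Y,D)$ is complete and $f(T(X))=j(X)$ is $\varrho$-dense in $Z$, the lemma also delivers the bijectivity of $F$, giving the required isometry of $(Y,D)$ onto $(Z,\varrho)$. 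The main subtlety I expect is precisely this last verification that Lemma~\ref{le1} is legitimate in the generalized (possibly infinite-valued) setting; once one observes that Cauchy/convergent sequences are class-stable, every step of the lemma's proof reduces to a computation inside a single ordinary strong b-metric space and the argument goes through without change.
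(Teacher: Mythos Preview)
Your proposal is correct and follows essentially the same route as the paper: parts (i) and (ii) are argued identically, and for (iii) your map $f=j\circ T^{-1}$ is exactly the paper's $S\circ R$, after which both invoke Lemma~\ref{le1}. Your explicit verification that the generalized strong b-metrics are continuous (via class-stability of convergent sequences) is a detail the paper leaves implicit but is indeed needed for the hypotheses of Lemma~\ref{le1}.
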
\begin{proof}
Since each strong b-metric space $(Y_i, D_i)$ is complete, Theorem \ref{t.2} implies that the generalized strong b-metric space $(Y,D)$ is complete.

Let $x,y \in X$. If $x,y \in X_i$, for some $i \in I$, then

$$ D(T(x), T(y)) = D_i (T_i(x), T_i(y)) = d_i (x,y) = d(x, y). $$

If $x \in X_i,\, y \in X_j$ with $i \neq j$, then

$$ T(x) = T_i(x) \in Y_i \text{ and } T(y) = T_j(x) \in Y_j\,, $$
 so that

$$D(T(x), T(y)) = D(T_i(x), T_j(y)) = + \infty = d(x,y).$$ Now for $ \xi \in Y$ there exists a unique $i \in I$ such that $\xi \in Y_i$. Since $T_i(X_i)$ is dense in $(Y_i, D_i)$, there exists a sequence $(x_n)$ in $X_i$ such that
$$ 0 = \lim_{n \to \infty} D_i(T_i(x_n), \xi ) = \lim_{n \to \infty} D(T(x_n), \xi),
$$
which means that $T(X)$ is $D$-dense in $(Y,D)$.

Finally, to verify (iii), let $ S: (X,d) \rightarrow (Z,\rho)$ be an isometric embedding with $S(X)$ dense in $Z$. Define $R:T(X)\to X$ by $R(T(x))=x,\, x\in X$. Then $R$ is an isometry of $T(X)$ onto $X$ and $S\circ R$ is an isometric embedding of $T(X)$ into $Z$. Since $T(X)$ is dense  in $Y$ and $S(R(T(X)))=S(X)$ is dense in $Z$,
 Lemma \ref{le1} yields the existence of  an isometry $U$ of $Y$ onto $Z$, which ends the proof.\end{proof}

\section{Fixed points in b-metric spaces}
We shall prove some  fixed point results  in b-metric and in generalized b-metric spaces.
\subsection{Fixed points in b-metric spaces}

 We start with the case of b-metric spaces.
 The second result is an extension to b-metric spaces of Theorem 4.1 from \cite{Dugundji-G}.

  Let $(X,d)$ be a b-metric space with $d$ satisfying the $s$-relaxed triangle inequality.
  We consider functions $\vphi:\Real_+\to\Real_+$ satisfying the conditions
  \begin{equation}\label{def.phi}
  \begin{aligned}
    &{\rm (a)}\quad \vphi  \;\mbx{ is  nondecreasing  and}\\
  &{\rm (b)}\quad \lim_{n\to\infty}\vphi^n(t)=0\;\mbx{  for all }\; t>0\,.\\
   \end{aligned}\end{equation}

  \begin{remark}\label{re.phi} If $\vphi:\Real_+\to\Real_+$ satisfies the conditions (a) and (b) from above, then
  \begin{align*}
  {\rm(c) } \quad &\vphi(t)<t\;\mbx{ for all }\;t>0; \\
  {\rm(d) } \quad &\lim_{n\to\infty}\vphi^n(0)=0\;\mbx{ and }\; \vphi(0)=0=\lim_{t\searrow 0}\vphi(t)\,.
  \end{align*}
   \end{remark}

  Indeed, if $\vphi(t)\ge t$ for some $t>0$, then, by (a),  $\vphi^2(t)\ge \vphi(t)\ge t$ and, in general $\vphi^n(t)\ge t>0$ for all $n$, in contradiction to (b).
  
  Also, $0\le\vphi(0)\le\vphi(1)$ implies $0\le\vphi^2(0)\le \vphi^2(1)$ and in general  $0\le\vphi^n(0)\le\vphi^n(1)$. Since $\lim_{n\to\infty}\vphi^n(1)=0,$ this yields (d).
  
  Similarly,  $0\le\vphi(0)\le \vphi(t)<t$ for any $t>0$, implies $\vphi(0)=0=\lim_{t\searrow 0}\vphi(t)$.

\begin{theo}\label{t1.Czerw} Let $(X,d)$ be a complete b-metric space, where $d$ satisfies the $s$-relaxed triangle inequality and let  $\vphi:\Real_+\to\Real_+$  be a function satisfying the conditions (a), (b) from \eqref{def.phi}.
Then every mapping $f:X\to X$ satisfying the inequality
\begin{equation}\label{eq1.Cz}
d(f(x),f(y))\le\vphi(d(x,y))\,,
\end{equation}
for all $x,y\in X$, has a unique fixed point $z$ and, for every $x\in X$, the sequence $\big(f^n(x)\big)_{n\in\Nat_0}$ converges to $z$ as $n\to \infty$.
  \end{theo}\begin{proof} We present the proof given in \cite{kaj-luk18}. Let $x\in X$. Put $x_n=T^nx,\, n\in\Nat_0\,,$  and let us show that $(x_n)_{n\in\Nat_0}$ is a Cauchy sequence.
  
  Observe first that \eqref{eq1.Cz} implies
  \bequ\label{eq2.Cz}
  d(T^nu,T^nv)\le\vphi^n(d(u,v))\,,
  \eequ
  for all $u,v\in X$ and $n\in\Nat.$ This implies 
  \bequ\label{eq3.Cz}
  d(T^nx_{kn},x_{kn})\le\vphi^{kn}(d(x_n,x_0))\,,
    \eequ
    for all $k,n\in\Nat.$ Indeed, by \eqref{eq2.Cz},
    $$  d(T^nx_{kn},x_{kn})= d(T^{kn}x_{n},T^{kn}x_{0})\le \vphi^{kn}(d(x_nx_0))\,.$$
    
    From \eqref{eq3.Cz} and \eqref{def.phi}.(b) one obtains 
    \bequ\label{eq3b.Cz}
  \lim_{k\to\infty}d(T^nx_{kn},x_{kn})=0\,,
    \eequ
    for every $n\in\Nat.$ 
    
 Let $\epsic>0$ be  given.
 
 Observe first that  there exist $\bar k,\bar n\in\Nat$ s.t.
    \bequ\label{eq4.Cz}\begin{aligned}
      {\rm(i)}\quad &T^{\bar n}(B(x_{\bar k\bar n},\epsic)\sse B(x_{\bar k\bar n},\epsic);\\
      {\rm(ii)}\quad & x_{k\bar n}\in B(x_{\bar k\bar n},\epsic)\;\mbx{ for all }\; k\ge \bar k;\\
      {\rm(iii)}\quad & d(x_{k_1\bar n},x_{\bar k_2\bar n})<2s\epsic\;\mbx{ for all }\; k_1,k_2\ge \bar k.
    \end{aligned}\eequ
    
    Indeed, by \eqref{def.phi}.(b), there exists $\bar n\in\Nat$ s.t.
    \bequ\label{eq5.Cz}
    \vphi^n(\epsic)<\epsic/(2s)\;\mbx{ for all }\; n\ge\bar n,
    \eequ 
    and, by \eqref{eq3b.Cz}, there exists $\bar k\in\Nat$ s.t. 
    \bequ\label{eq6.Cz}
    d(T^{\bar n}x_{k\bar n},x_{k\bar n})<\epsic/(2s)\;\mbx{ for all }\; k\ge\bar k\,.
    \eequ
    
    But then, $d(u,x_{\bar k\bar n})<\epsic$ implies
    
    $$
    d(T^{\bar n}u,T^{\bar n}x_{\bar k\bar n})\le\vphi^{\bar n}(d(u,x_{\bar k\bar n}))\le   \vphi^{\bar n}(\epsic)<\epsic/(2s)\,,$$
    so that
    \begin{align*}      
    d(T^{\bar n}u,x_{\bar k\bar n})&\le s\left[ d(T^{\bar n}u,T^{\bar n}x_{\bar k\bar n})+d(T^{\bar n}x_{\bar k\bar n},x_{\bar k\bar n})\right]\\
    &<s\left[\frac\epsic{2s}+\frac\epsic{2s}\right]=\epsic\,,
    \end{align*}
    showing that \eqref{eq4.Cz}.(i) holds. 
    
    Since $x_{\bar k\bar n}\in B(x_{\bar k\bar n},\epsic)$ it follows that $x_{(\bar k+1)\bar n}=T^{\bar n}x_{\bar k\bar n} \in B(x_{\bar k\bar n},\epsic)$
    and, in general, by induction,  $ x_{(\bar k+j)\bar n}  \in B(x_{\bar k\bar n},\epsic)$ for any $j\in\Nat_0.$
    
     Now, by (ii), $x_{k_1\bar n},x_{k_2\bar n}\in B(x_{\bar k\bar n},\epsic)$ for all $k_1,k_2\ge \bar k,$ so that
    $$
    d(x_{k_1\bar n},x_{k_2\bar n})\le s( d(x_{k_1\bar n},x_{\bar k\bar n})+d(x_{\bar k\bar n},x_{k_2\bar n})<2s\epsic\,,$$
    showing that (iii) holds too.   
    
    By \eqref{eq3b.Cz} for $n=1$ one obtains
    \bequs
    \lim_{k\to\infty}d(x_{k+1},x_k)=0\,.\eequs
    
   It is easy to check that  this implies 
 \bequs
      \lim_{k\to\infty}d(x_{k\bar n+p},x_{k\bar n})=0 \;\mbx{ for }\; p=0,1,\dots,\bar n-1\,,
      \eequs
      so there exists $k_0\in \Nat$ s.t. 
      \bequ\label{eq8.Cz}
       d(x_{k\bar n+p},x_{k\bar n})<\epsic\;\mbx{ for all  }\; k\ge k_0 \;\mbx{ and }\;  p=0,1,\dots,\bar n-1\,.
       \eequ

     Let now $\tilde k:=\max\{\bar k, k_0\}$  and let  $m_1=k_1\bar n+p_1,\, m_2=k_2\bar n+p_2$ with
       $p_1,p_2\in\{0,1,\dots,\bar n-1\}$ and $k_1,k_2\ge \tilde k$.

     Combining \eqref{eq8.Cz} and \eqref{eq4.Cz}.(iii) one obtains
     \begin{align*}
       d(x_{m_1},x_{m_2})&\le sd(x_{k_1\bar n+p_1},x_{k_1\bar n})+s^2d(x_{k_1\bar n},x_{k_2\bar n})+s^3d(x_{ k_2\bar n},x_{k_2\bar n+p_2})\\
       &<(s+2s^3+s^2)\epsic\le 4s^3 \epsic\,,
     \end{align*} 
     which shows that $(x_n)$ is a Cauchy sequence. 
     
     The completeness of $X$ implies the existence of a point $z\in X$ s.t. $\lim_{n\to\infty}d(x_n,z)=0.$
     
     We have 
     $$
     d(x_{n+1},Tz)=d(Tx_n,Tz)\le\vphi(d(x_n,z))\le d(x_n,z)$$
     for all $n\in\Nat,$ so that

     \begin{align*}
       d(z,Tz)&\le s\left[d(z,x_{n+1})+d(x_{n+1},Tz)\right]\\ 
       &\le   s\left[d(z,x_{n+1})+d(x_{n},z)\right]\,.
     \end{align*}
     
     Letting $n\to \infty$ one obtains $d(z,Tz)=0,$ that is, $z=Tz.$
     
     The uniqueness follows in the following way. Suppose $z_i=Tz_i,\, i=1,2.$ Then
     $$
     d(z_1,z_2)=d(Tz_1,Tz_2)\le\vphi(d(z_1,z_2))\,.$$
     
     By Remark \ref{re.phi}.(c) this can hold only for $d(z_1,z_2)=0,$ that is, for $z_1=z_2.$   
  \end{proof}

Let $(X,d)$ be a b-metric space with $d$ satisfying the s-relaxed triangle inequality for some $s\ge 1.$ An important  particular case of a function $\vphi$ satisfying the conditions (a),(b) from \eqref{def.phi} is
$$
\vphi(t)=\alpha t,\; t\ge 0\,.$$

If  $0<\alpha <1,$  then
 
$$
\vphi^n(t)=\alpha^n t\to 0 \; \mbx{ as }\; n\to \infty\,.$$
 Since $\vphi$ is also strictly increasing, it satisfies the conditions (a),(b) from \eqref{def.phi}.

The inequality  \eqref{eq1.Cz} becomes in this case
\begin{equation*}
d(f(x),f(y))\le \alpha d(x,y)\,,
\end{equation*}
for all $x,y\in X$.

So,  Theorem \ref{t1.Czerw} has as consequence the analog of Banach contraction principle in b-metric spaces. The following proposition  also  illustrates how
various types of relaxed  inequalities for the b-metric influence the form this principle takes.

\begin{prop}\label{c1.Bahtin}  Let $(X,d)$ be a complete b-metric space, where $d$ satisfies the $s$-relaxed triangle inequality and
$f:X\to X$ a mapping such that, for some $0<\alpha <1,$
\begin{equation}\label{eq1.Bahtin}
d(f(x),f(y))\le\alpha d(x,y),
\end{equation}
for all $  x,y\in X.$   Then $f$
 has a unique fixed point $z$ and, for every $x\in X$, the sequence $\big(f^n(x)\big)_{n\in\mathbb{N}}$ converges to $z$ as $n\to \infty$.

\begin{enumerate}
\item[\rm 1. (\cite{bahtin89})]   
If further  $0<\alpha<1/s,$ then the following evaluation of the order of convergence holds
\begin{equation}\label{ineq2.Bahtin}
d(x_{n},z)\le\frac{s^2d(x_0,x_1)}{1-\alpha s}\cdot \alpha^{n}\,,
\end{equation}
for all $n\in\mathbb{N}.$
\item[\rm 2. (\cite{Kirk-Shah})] If $d$ satisfies the $s$-relaxed polygonal inequality, then  the following evaluation of the order of convergence
    \begin{equation}\label{ineq2.KS2}
d(x_{n},z)\le \frac{s^2d(x_0,x_1)}{1-\alpha}\cdot \alpha^n,\quad n\in\Nat\,,
\end{equation}
holds for   any $0<\alpha<1.$
\end{enumerate}
  \end{prop}\begin{proof}
   Although, as we have remarked,  the first statement of the proposition follows from Theorem  \ref{t1.Czerw}, we show a proof based on Theorem \ref{t.Stemp}.
     Our presentation  follows  \cite{an-dung15b}.

Suppose that $d$ satisfies the $s$-relaxed triangle inequality, for some $s\ge 1.$ If $0<p\le 1$ is given by the equation $(2s)^p=1$,
  then,  by Theorem \ref{t.Stemp}, the functional $\rho_p$ given by \eqref{def.p-metric} is a metric on $X$ satisfying the inequalities
\begin{equation}\label{ineq1.dung}
  \rho_p\le d^p\le 2\rho_p\,.\end{equation}

For  $x,y\in X$ let $x=x_0,x_1,\dots,x_n=y$ be an arbitrary chain in $X$ connecting $x$ and $y$. Then $y_i=f(x_i),\, i=0,1,\dots,n,$ is a chain in $X$ connecting
 $f(x)$ and $f(y)$. Consequently, by  \eqref{def.p-metric} and \eqref{eq1.Bahtin},
 \begin{equation}\label{ineq2.dung}
   \rho_p(f(x),f(y))\le \sum_{i=0}^{n-1}d(y_i,y_{i+1})^p\le \alpha^p \sum_{i=0}^{n-1}d(x_i,x_{i+1})^p.
 \end{equation}

 Since the inequality between the extreme terms in \eqref{ineq2.dung} holds for all chains $x=x_0,x_1,\dots,x_n=y,\, n\in\Nat,$   connecting $x$ and $y$,
 it follows
 $$
 \rho_p(f(x),f(y))\le\alpha^p\rho_p(x,y)\,,$$
 for all $x,y\in X$, where $0<\alpha^p<1.$ Consequently,  $f$ is a contraction with respect to  $\rho_p$. The inequalities \eqref{ineq1.dung} and the completeness of $(X,d)$ imply the completeness of $(X,\rho_p)$ and so, by Banach's contraction principle, $f$ has  a unique fixed point $z\in X$ and the sequence of iterates $(f^n(x))_{n\in\Nat}$ is $\rho_p$-convergent to $z$, for every $x\in X$. Appealing again to the inequalities \eqref{ineq1.dung}, it follows that $(f^n(x))_{n\in\Nat}$ is also $d$-convergent to $z$ for every $x\in X$.

 1.\;   The proof is similar to that of Banach's contraction principle in the metric case.  Observe first that, \eqref{eq1.Bahtin} implies
  \begin{equation}\label{eq2.Bahtin}
  d(f^n(x),f^n(y))\le\alpha^nd(x,y)\,,
  \end{equation}
  for all $n\in\mathbb{N}$ and $x,y\in X$.

  For  $x_0\in X$ consider the sequence of iterates
   $$x_n=f(x_{n-1})=f^n(x_0),\quad n\in\mathbb{N}\,.$$

   Let us prove that $(x_n)$ is a Cauchy sequence.

  By \eqref{s-relax-n} and  \eqref{eq2.Bahtin},
  \begin{equation}\label{ineq3.Bahtin}\begin{aligned}
  d(x_{n},x_{n+k+1})&\le sd(x_{n},x_{n+1}) + s^2d(x_{n+1},x_{n+2})+\dots\\&+s^{k}d(x_{n+k-1},x_{n+k})+s^{k}d(x_{n+k},x_{n+k+1})\\
  &\le \big(\alpha^{n}s+\alpha^{n+1}s^2+\dots+\alpha^{n+k-1}s^k\big)d(x_0,x_1)+\alpha^{n+k}s^{k}d(x_0,x_1)\\
  &=\alpha^{n}s\left(\frac{1-(\alpha s)^k}{1-\alpha s} +\alpha^ks^{k-1}\right)d(x_0,x_1)\\&=\alpha^{n}s
  \frac{1-(\alpha s)^{k-1}\alpha(s+1-\alpha s)}{1-\alpha s}d(x_0,x_1)\\&<\alpha^n\frac{s d(x_0,x_1)}{1-\alpha s}\,,
  \end{aligned}\end{equation}
  for all $n,k\in\mathbb{N}.$ Since $\lim_{n\to\infty}\alpha^{n}=0,$ this shows that $(x_n)$ is a Cauchy sequence. By the completeness of $(X,d)$ there exists $z\in X$ such that
  $\lim_{n\to\infty}d(x_n,z)=0$. We have
  \begin{align*}
    d(z,f(z))&\le sd(z,x_{n+1})+sd(x_{n+1},f(z))\\
    &\le sd(z,x_{n+1})+s\alpha d(x_{n},z)\lra 0\;\mbx{ as }\; n\to \infty\,.
  \end{align*}
  
  Hence $d(z,f(z))=0$ and so  $z=f(z).$
  
  Taking into account \eqref{ineq3.Bahtin},
    \begin{align*}
     d(x_{n},z)&\le s d(x_{n},x_{n+k+1})+s d(x_{n+k+1},z)\\
     &<\alpha^n\frac{s^2 d(x_0,x_1)}{1-\alpha s}+s d(x_{n+k+1},z)\,.
      \end{align*}
      
      Letting $k\to\infty$, one obtains \eqref{ineq2.Bahtin}.
    
   Suppose now that there exists two points $z,z'\in X$ such that $f(z)=z$ and $f(z')=z'$. Then the relations
  $$
  d(z,z')=d(f(z),f(z'))\le\alpha d(z,z')
  $$
  show that $d(z,z')=0$, i.e. $ z=z'.$

2.\; Let $x_0\in X$ and $x_n=f(x_{n-1}),\,n\in\mathbb{N}.$ Taking into account  the relaxed polygonal inequality and \eqref{eq2.Bahtin}, we obtain
\begin{align*}
  d(x_{n},x_{n+k})&\le s\sum_{i=0}^{k-1}d(x_{n+i},x_{n+i+1})
  \le   s(\alpha^n+\alpha^{n+1}+\dots+\alpha^{n+k})d(x_0,x_1)\\
 &=  s\alpha^n\,\frac{1-\alpha^{k+1}}{1-\alpha}\cdot d(x_0,x_1)< \frac{sd(x_0,x_1)}{1-\alpha}\cdot\alpha^n.
\end{align*}

Based on these relations the proof goes as in case 1.
\end{proof}

   \begin{remark}
     The proof given here to statement 2 from Proposition \ref{c1.Bahtin} is simpler than that of Theorem 12.4 in \cite{Kirk-Shah}.
   \end{remark}

\begin{remark} The proofs given in \cite{czerw93} and \cite{Kirk-Shah} to  Theorem \ref{t1.Czerw}   go in the following way.

Let $x$ be a fixed element of $ X$ and  $\epsic >0$. By \eqref{def.phi}.(b) there exists $m=m_\epsic\in\Nat$ such that
 \begin{equation}\label{eq1.err}
 \vphi^{m}(\epsic)<\frac\epsic{2s}\,.\end{equation}

 One considers the sequence $x_k=f^{km}(x),\ k\in \Nat,$ and one shows that there exists $k_0\in\Nat$ such that
 \begin{equation}\label{ineq1.Cauchy}
 d(x_k,x_{k'})<2s\epsic\,,
 \end{equation}
 for all $k,k'\ge k_0.$  One affirms that the inequality \eqref{ineq1.Cauchy} shows that $(x_k)$ is a Cauchy sequence, which is not surely true, because the inequality is true only for this specific $\epsic.$

 Taking another $\epsic,$ say $0<\epsic'<\epsic,$ we find another number $m'=m_{\epsic'}$ (possibly different from $m$),  such that

 \begin{equation}\label{eq2.err}
 \vphi^{m'}(\epsic')<\frac{\epsic'}{2s}\,.\end{equation}

 The above procedure yields a sequence $x'_k=f^{km'}(x),\, k\in \Nat,$ satisfying, for some   $k_1\in\Nat,$
 \begin{equation}\label{ineq2.Cauchy}
 d(x_k,x_{k'})<2s\epsic'\,,
 \end{equation}
 for all $k,k'\ge k_1.$

 But the sequences $(x_k)$ and $(x'_k)$ can be totally different, so we cannot  infer that the sequence $(x_k)$ is Cauchy.

As we have shown this flaw was fixed in the paper \cite{kaj-luk18}.\end{remark}
\begin{remark}
  Berinde \cite{beri93}  considers comparison functions satisfying  a  condition stronger than  $0<\alpha<1/s,$
  namely  $\sum_{k=1}^\infty \vphi^k(t)<\infty$, allowing estimations of the order of convergence similar to \eqref{ineq2.Bahtin}. He also shows that the sequence $x_n=f^n(x_0),\, n\in\Nat_0,$ is convergent to a fixed point of $f$ if and only if it is bounded.   For various kinds  of   comparison functions, the relations between them an applications to fixed points, see   \cite[\S3.0.3]{Rus-PP}.

\end{remark}

\subsection{Fixed points in generalized b-metric spaces}
 Theorem \ref{t1.Czerw} admits the following extension to    generalized b-metric spaces.

\begin{theo}\label{theorem-5}
Let $(X,d)$ be a complete generalized  b-metric space and suppose that the mapping $f: X \to X$ is such that

\begin{equation}\label{zad6}
d\left(f(x),f(y)\right) \le \varphi\left(d(x,y)\right)\,,
\end{equation}
for all $ x,y \in X$ with $d(x,y)<\infty,$ where the function $\varphi:\mathbb{R}_+\to \mathbb{R}_+$  satisfies the conditions (a),(b) from \eqref{def.phi}.

 Consider, for some $x \in X$, the sequence of successive approximations $\left(f^n (x) \right)_{n \in \mathbb{N}_0}$.
Then either

  {\rm (A) }\;\; $ d (f^k(x), f^{k+1}(x)) = + \infty $  for all $k \in \mathbb{N}_0$,\\
 or

    {\rm (B) }\;\; the sequence $\left(f^n (x) \right)_{n \in \mathbb{N}}$ is convergent to a fixed point of $f$.
\end{theo}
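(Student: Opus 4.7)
The plan is to reduce to the b-metric case (Theorem \ref{t1.Czerw}) by exploiting the canonical decomposition from Theorem \ref{t.2}. Assume case (A) fails, so that there exists $k_0 \in \Nat_0$ with $d(f^{k_0}(x), f^{k_0+1}(x)) < \infty$. Let $X_{i_0}$ be the equivalence class (under the relation \eqref{zad3}) containing $f^{k_0}(x)$; by hypothesis, $f^{k_0+1}(x) \in X_{i_0}$ as well.

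The first key step is to show that the entire tail $(f^{k_0+j}(x))_{j \ge 0}$ lies in $X_{i_0}$. This follows by induction: if $f^{k_0+j}(x)$ and $f^{k_0+j+1}(x)$ both lie in $X_{i_0}$, then $d(f^{k_0+j}(x), f^{k_0+j+1}(x)) < \infty$, so \eqref{zad6} yields
\bequs
  d(f^{k_0+j+1}(x), f^{k_0+j+2}(x)) \le \vphi\big(d(f^{k_0+j}(x), f^{k_0+j+1}(x))\big) < \infty\,,
\eequs
placing $f^{k_0+j+2}(x)$ in the same equivalence class as $f^{k_0+j+1}(x)$, namely $X_{i_0}$.

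The second key step is to verify that $f(X_{i_0}) \sse X_{i_0}$. For any $y \in X_{i_0}$, the finiteness $d(y, f^{k_0}(x)) < \infty$ together with \eqref{zad6} gives $d(f(y), f^{k_0+1}(x)) \le \vphi(d(y, f^{k_0}(x))) < \infty$, so $f(y)$ is equivalent to $f^{k_0+1}(x) \in X_{i_0}$, hence $f(y) \in X_{i_0}$. By Theorem \ref{t.2}, $(X_{i_0}, d_{i_0})$ is a complete b-metric space, and the restriction $g := f|_{X_{i_0}} \colon X_{i_0} \to X_{i_0}$ satisfies the hypothesis $d_{i_0}(g(u), g(v)) \le \vphi(d_{i_0}(u,v))$ for all $u,v \in X_{i_0}$, where $\vphi$ meets conditions (a)--(c) of \eqref{def.phi}.

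Finally, Theorem \ref{t1.Czerw} applied to $g$ on $(X_{i_0}, d_{i_0})$ produces a (unique in $X_{i_0}$) fixed point $z \in X_{i_0}$ such that the iterates $g^n(y)$ converge to $z$ for every $y \in X_{i_0}$. Choosing $y = f^{k_0}(x) \in X_{i_0}$ gives $g^n(f^{k_0}(x)) = f^{n+k_0}(x) \to z$ in $(X_{i_0}, d_{i_0})$, hence $d(f^n(x), z) \to 0$, and $f(z) = z$. This is case (B). The main obstacle is the bookkeeping around finiteness: one must carefully show that a single finite step in the orbit forces the entire forward tail, and in fact the whole equivalence class, to be $f$-invariant, so that the b-metric result can be applied without leaving $X_{i_0}$.
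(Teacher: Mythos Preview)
Your proof is correct and follows essentially the same route as the paper's own argument: assume (A) fails, locate the equivalence class $X_{i_0}$ containing the first finite step of the orbit, show $f$ leaves $X_{i_0}$ invariant, and then apply Theorem~\ref{t1.Czerw} on the complete b-metric space $(X_{i_0},d_{i_0})$ via Theorem~\ref{t.2}. Your write-up is slightly more explicit about the induction on the tail and the final convergence, but the ideas and the key steps coincide with the paper's proof.
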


\begin{proof} Let $X=\bigcup_{i\in I}X_i$ be the canonical decomposition of $X$ corresponding to the equivalence relation \eqref{zad3}.
Assume that (A) does not hold. Then
$$ d(f^m(x), f^{m+1}(x)) < + \infty\,,$$
 for some $m \in \mathbb{N}_0\,.$
If $i\in I$ is such that    $ f^m(x), f^{m+1}(x) \in X_{i},$ then
$$
d\left( f^{m+1}(x), f^{m+2}(x)\right) \le \varphi\left( d\left( f^{m}(x), f^{m+1}(x)\right)\right)<\infty\,,$$
implies $f^{m+2}(x)\in X_i\,,$ and so, by mathematical induction,   $f^{m+k}(x)\in X_i$ for all  $k\in\mathbb N_0\,.$

Since
$$
  z\in X_i\iff d(z,f^m(x))<\infty\,,$$
the inequality
$$
d(f(z),f^{m+1}(x))\le \varphi(d(z,f^m(x))<\infty\,,$$
shows that the restriction $f_i=f|_{X_i}$ of $f$ to $X_i$ is a mapping from $X_i$ to $X_i$ satisfying
$$
d(f_i(y),f_i(z))\le\varphi(d(y,z))\,,$$
for all $y,z\in X_i.$

By Theorem \ref{t.2}, $X_i$  is  complete, so that,   by Theorem \ref{t1.Czerw},   $\left( f^{m+k}(x)\right)_{k\in\mathbb N_0}$ is convergent to a fixed point $z_i\in X_i$ of $f_{i}$, which  is a fixed point   for $f$.
\end{proof}

\begin{remark}
For $s = 1$  and $\varphi(t)=\alpha t,\, t\ge 0,$ where $0\le\alpha<1,$  we get the Diaz and Margolis \cite{diaz-margol68} fixed point theorem of the alternative. At the same  time this extends Theorem 2 from \cite{czerw-krol17} and  give  simpler proofs to  Theorems  2.1 and 3.1 from \cite{aydi-czerw18}.
\end{remark}

Proposition \ref{c1.Bahtin}  also admits   extensions to this setting as results of the alternative. We formulate only one of these results.

\begin{corol}\label{c2.Bahtin}  Let $(X,d)$ be a complete b-metric space, where $d$ satisfies the $s$-relaxed triangle inequality and let
$f:X\to X$ be a mapping satisfying, for some $ 0<\alpha<1,$ the inequality
\begin{equation*}
d(f(x),f(y))\le\alpha d(x,y)\,,
\end{equation*}
for all $x,y\in X$ with $d(x,y)<\infty.$ 

 Then, for every $x\in X,$ either

  {\rm (A$'$) }\;\; $ d (f^k(x), f^{k+1}(x)) = + \infty $  for all $k \in \mathbb{N}_0$,\\
 or

    {\rm (B$'$) }\;\; the sequence $\left(f^n (x) \right)_{n \in \mathbb{N}}$ is convergent to a fixed point of $f$.
  \end{corol}

  \section{Lipschitz functions}
  In this section we shall discuss the behavior  of Lipschitz functions defined on or taking values in  quasi-normed spaces and of Lipschitz functions on spaces of homogeneous type.

\subsection{Quasi-normed spaces}\label{Ss.quasi-normed space}
We start by a brief presentation of an important class of b-metric spaces -- quasi-normed spaces.
Good references  are \cite{kalt03}, \cite{Kalton-F-sp}, \cite[pp. 156-166]{Kothe}, \cite{Rolew-MLS}.

A \emph{quasi-norm} on a vector space $X$ (over $\Kapa$ equal to $\Real$ or $\Complex$) is a functional $\|\cdot\|:X\to\mathbb{R}_+$ for which  there exists a real number $k\ge 1$ so that
\begin{enumerate}
\item[(QN1) ] $\|x\|=0\iff x=0;$
\item[(QN2) ] $\|\alpha x\|=|\alpha|\,\| x\|;$
\item[(QN3) ] $\|x+y\|\le k(\| x\|+\|y\|),$
\end{enumerate}
for all $x,y\in X $ and $\alpha \in \mathbb{K}$. The pair $(X,\norm)$ is called a quasi-normed space. A complete quasi-normed space is called a quasi-Banach space.

If $k=1$, then $\|\cdot\|$ is a norm. The smallest constant $k$ for which the inequality (QN3) is satisfied for all $x,y\in X$ is called the \emph{modulus of concavity} of the quasi-normed space $X.$

For a  linear operator $T$ from a quasi-normed   space $(X,\|\cdot\|_X)$ to a normed space $(Y,\|\cdot\|_Y)$ put
$$
\|T\|=\sup\{\|Tx\|_Y : x\in X, \,\|x\|_X\le 1\}\,.$$

In particular,
\begin{equation}\label{norm-quasi-Banach}
\|x^*\|=\sup\{|x^*(x)| : x\in X,\|x\|_X\le 1\},\;\; x^*\in X^*,
\end{equation}
is a norm on the dual space $X^*=(X,\|\cdot\|_X)^*$.

It follows that $T$ is continuous if and only if $\|T\|<\infty$ and, in this case,
\begin{equation*}
\|Tx\|_Y\le\|T\|\|x\|_X\,,\;\; x\in X,
\end{equation*}
$\|T\|$ being the smallest number $L\ge 0$  for which the inequality $\|Tx\|_Y\le L\|x\|_X$ holds for all $x\in X$. If $Y$ is also a quasi-normed space, then  $\|\cdot\|$ is only a quasi-norm on the space $\mathcal L(X,Y)$ of all continuous linear operators from $X$ to $Y$.

 Two quasi-norms $\Vert \cdot\Vert_ 1,\Vert \cdot\Vert_ 2$ on a vector space $X$ are called \emph{equivalent} if they generate the same topology, or equivalently, if

$$\Vert x_n-x\Vert_ 1\to 0\iff  \Vert x_n-x\Vert_ 2\to 0\,,$$
for all sequences $(x_n)$ in $X$ and $x\in X$. As in the case of norms, the equivalence of two quasi-norms $\Vert \cdot\Vert_ 1,\Vert \cdot\Vert_ 2$ on a vector space $X$ is equivalent
to the existence of two numbers $\alpha,\beta>0$ such that
$$
\alpha\Vert x\Vert_ 1\le\Vert x\Vert_ 2\le\beta\Vert x\Vert_ 1\,,$$
for all $x\in X$.

A subset $A$ of a topological vector space (TVS) $(X,\tau)$ is called \emph{bounded} if it is absorbed by any 0-neighborhood, i.e for every $V\in\mathcal V_\tau(0)$ there exists $t>0$ such that $A\subseteq t V.$ A TVS is called \emph{locally bounded} if it has a bounded 0-neighborhood. A quasi-normed space $(X,\|\cdot\|)$ is locally bounded, as  the closed unit ball $B_X=\{x\in X : \|x\|\le 1\}$ is a bounded neighborhood of 0. One shows that, conversely, the topology of every locally  bounded TVS is generated by a quasi-norm.

A quasi-normed space $(X,\|\cdot\|)$ is normable (i.e. there exists a norm $\|\cdot\|_1$ on $X$ equivalent to the quasi-norm $\|\cdot\|$) if and only if 0 has a bounded convex neighborhood (implying that $X$ is locally convex).

\begin{defi}\label{def.F-norm}
 An $F$-\emph{norm} on a vector space $X$ is a mapping $\|\cdot\|:X\to\mathbb{R}_+$ satisfying the conditions  \index{$F$-norm}
\begin{enumerate}
\item[(F1) ] $\|x\|=0\iff x=0;$
\item[(F2) ] $\|\lambda x\|\le \|x\|$\;   for all \; $\lambda\in \mathbb{K}$ with $|\lambda|\le 1;$
\item[(F3) ] $\|x+y\| \le  \|x\| +\|y\|;$
\item[(F4) ] $\|x_n\|\to 0\;\Longrightarrow \|\lambda x_n\|\to 0;$     \index{F-norm}
\item[(F5) ] $\lambda_n\to 0\;\Longrightarrow \|\lambda_n x\|\to 0,$
\end{enumerate}
for all $ x, y, x_n \in X$ $\lambda,\lambda_n\in \mathbb{K}.$
An $F$-\emph{space} is a vector space equipped with a complete $F$-norm. \end{defi}\index{$F$-norm}

It follows that $d(x,y)=\|y-x\|,\, x,y\in X,$ is a translation-invariant metric on $X$ defining a vector topology. It is known that the metrizability of a TVS $(X,\tau)$ is equivalent to the existence of a countable basis of 0-neighborhoods, and in this case there exists a translation-invariant  metric $d$ on $X$ generating the topology $\tau$.

One shows, see \cite[p. 163]{Kothe},  that the topology of a metrizable TVS can be always given by an $F$-norm. If $(X,\tau)$ is a TVS, then the topology $\tau$ generates a uniformity $\mathcal W_\tau$ on $X $, a basis of it being given by the sets
$$
W_U=\{(x,y)\in X^2  : y-x\in U\}\,,$$
where $U$ runs over a 0-neighborhood basis in $X.$  Any translation-invariant metric generating the topology $\tau$  generates the same uniformity $\mathcal W_\tau$, so that
if $X$ is complete with respect to $\mathcal W_\tau$, then it is complete with respect to any translation-invariant metric generating the topology $\tau.$

\index{F-space}

Typical examples of quasi-normed  spaces are the spaces $L^p[0,1]$ and $\ell^p$ with $0<p<1$ equipped with the quasi-norms
\begin{equation}\label{def.Lp-qn}
\|f\|_p=\left(\int_0^1|f(t)|dt\right)^{1/p}\; \mbox{ and }\;   \|x\|_p=\left(\sum_{k=1}^\infty|x_k|^p\right)^{1/p}\,,
\end{equation}
for $f\in L^p[0,1]$ and $x=(x_k)_{k\in\mathbb{N}}\in\ell^p\,,$  respectively.

The quasi-norms $\|\cdot\|_p$ satisfy the inequalities
\begin{equation}\begin{aligned}\label{ineq1.Lp}
&\|f+g\|_p\le 2^{(1-p)/p}(\|f\|_p+\|g\|_p)\; \mbox{ and}\\
&\|x+y\|_p\le 2^{(1-p)/p}(\|x\|_p+\|y\|_p)\,,
\end{aligned}\end{equation}
for all  $f,g\in L^p[0,1]$ and $x,y\in\ell^p.$

The constant $2^{(1-p)/p}>1$ is sharp, i.e. the moduli of concavity of the spaces $L^p[0,1]$ and $\ell^p$ are both equal to $2^{(1-p)/p}$.

To show this, we  start with the elementary inequalities
\begin{equation}\label{ineq.Lp}
(a+b)^p\le a^p+b^p\le 2^{1-p}(a+b)^p\,,\end{equation}
valid for all $a,b>0$.

Let $f,g\in L^p[0,1].$ The first inequality from above implies
$$
|f(t)+g(t)|^p\le (|f(t)|+|g(t)|)^p\le  |f(t)|^p+|g(t)|^p\,,
$$
for almost all $t\in [0,1], $ so that
\begin{equation}\label{ineq3.Lp}\begin{aligned}
  \Vert f+g\Vert_ p^p&=\int_0^1|f(t)+g(t)|^pdt\le \int_0^1|f(t)|^pdt+\int_0^1|g(t)|^p dt \\&=\Vert f\Vert_ p^p+\Vert g\Vert_ p^p\,.
\end{aligned}\end{equation}

This inequality and the second inequality from \eqref{ineq.Lp} yield

\begin{align*}
  \Vert f+g\Vert_ p&=\left(\Vert f+g\Vert_ p^p\right)^{1/p}\le\left(\Vert f\Vert_ p^p+\Vert g\Vert_ p^p\right)^{1/p}\\
  &\le 2^{(1-p)/p}(\Vert f\Vert_ p+\Vert g\Vert_ p)\,.
\end{align*}

Similar calculations can be done to show that
$$
\Vert x+y\Vert_ p\le 2^{(1-p)/p}(\Vert x\Vert_ p+\Vert y\Vert_ p)\,,
$$
for all $x,y\in\ell^p.$

To show that the constant $2^{(1-p)/p}$ is sharp take $x=(1,0,0\dots)$ and $y=(0,1,0\dots)$ in the case of the space $\ell^p$. Then
$$
\Vert x+y\Vert_ p=2^{1/p}\; \mbox{ and }\; 2^{(1-p)/p}(\Vert x\Vert_ p+\Vert y\Vert_ p)=2^{(1-p)/p} \cdot 2=2^{1/p}\,,$$
that is, we have equality in the second inequality from \eqref{ineq1.Lp}. In the case of the space $L^p[0,1]$ take $f=\chi_{[0,\frac12)}$ and
$g=\chi_{[\frac12,1]}$ to obtain equality in the first inequality from \eqref{ineq1.Lp}.

\begin{remark}
  Apparently similar, the quasi-normed spaces $\ell^p$ and $L^p[0,1]$ drastically differ. For instance, the space $L^p[0,1]$ has trivial dual, $(L^p[0,1])^*=\{0\}$, while $(\ell^p)^*=\ell^\infty$, see \cite[pp. 156-158]{Kothe}.   D. Pallaschke \cite{pallas73} and Ph. Turpin \cite{turpin73}) have shown that every compact endomorphism of $L^p,\, 0<p<1,$ is null. N. Kalton and J. H. Shapiro \cite{kalt-shapir75} showed that there exists a quasi-Banach space with trivial dual admitting non-trivial compact endomorphisms. The example is a quotient space of $H^p,\, 0<p<1.$
Here,  $H^p,\, 0<p<1,$ denotes the classical Hardy quasi-Banach spaces of analytic functions in the unit disk of $\Complex$.
\end{remark}

  A $p$-\emph{norm}, where $0<p\le 1,$ is a mapping $\|\cdot\|:X\to\mathbb{R}_+$ satisfying (QN1) and (QN2) and
  \begin{enumerate}
  \item[(QN$3'$) ]
  ${\displaystyle \|x+y\|^p\le \|x\|^p+\|y\|^p\,,}$
  for all $x,y\in X$.\end{enumerate}
The quasi-norms of the spaces  $L^p[0,1]$ and  $\ell^p,\, 0<p<1,$ a $p$-norms, i.e.
\begin{align*}
  &\|f+g\|^p_p\le  \|f\|^p_p+\|g\|^p_p\;\mbox{ and}\\
  &\|x+y\|^p_p\le  \|x\|^p_p+\|y\|^p_p,
\end{align*}
for all  $f,g\in L^p[0,1]$ and  $x,y\in \ell^p.$

  A famous result of T. Aoki \cite{aoki42} and S. Rolewicz \cite{rolew57} says that on any quasi-normed space $(X,\|\cdot\|)$ there exists a $p$-norm equivalent to $\|\cdot\|$, where $p$ is determined from the  equality $2^{1/p}=k,$ $k$ being the constant from (QN3).

  Let $0<p\le 1.$ A subset $A$ of a vector space $X$ is called $p$-\emph{convex} if $\alpha x+\beta y\in A$ for all $x,y\in A$ and all $\alpha,\beta\ge 0$ with $\alpha^p+\beta^p=1, $ and $p$-\emph{absolutely convex} if $\alpha x+\beta y\in A$ for all $x,y\in A$ and all $\alpha,\beta\in\mathbb{K}$ with $|\alpha|^p+|\beta|^p\le 1. $
For $p=1$ one obtains the usual  convex and absolutely convex sets, respectively.

A TVS $X$ is $p$-normable if and only if it has a bounded $p$-convex 0-neighborhood, see \cite[p. 161]{Kothe}. One shows first that under this hypothesis there exists a bounded $p$-absolutely convex neighborhood $V$ of 0 and one defines the $p$-norm as the Minkowski functional corresponding to $V$, i.e. $\|x\|=\inf\{t : t>0,\, x\in tV\}.$\index{$p$-norm}\index{$p$-normed space}\index{set!$p$-convex}\index{set!$p$-absolutely convex}

\begin{remark}
    In K\"othe \cite{Kothe} by a $p$-norm on a vector space $X$ one understands a mapping $\Vert \cdot\Vert':X\to\mathbb{R}_+$ such that
    $$
 \Vert x\Vert '=0 \iff x=0, \quad \Vert \alpha x\Vert '=|\alpha|^p\Vert x\Vert '\;\;\mbox{ and }\;\;  \Vert x+y\Vert '\le \Vert x\Vert '+\Vert y\Vert '\,,$$
    for all $x,y\in X$ and $\alpha\in\mathbb{K}$.  In this case the ``$p$-norm" corresponding to a bounded  absolutely $p$-convex 0-neighborhood is given by
    $\Vert x\Vert '=\inf\{t^p : t>0,\, x\in tV\}.$

  It follows  that  $\Vert \cdot\Vert $ is a $p$-norm in the sense given here if and only if $\Vert \cdot\Vert ^p$ is a $p$-norm in the sense given in \cite{Kothe}.
  \end{remark}

\medskip

\textbf{The Banach envelope.}\index{quasi-Banach space!Banach envelope}

Let $(X,\|\cdot\|)$ be a quasi-Banach space and $B_X=\{x\in X : \|x\|\le 1\}$ its closed unit ball. Denote by $\|\cdot\|_C$ the Minkowski functional of the set $C=\conv(B_X)$.
It is obvious that $\|\cdot\|_{C}$ is a seminorm on $X$ and a norm on the quotient space $X/N$, where $N=\{x\in X :\|x\|_C=0\}. $  Since, for  $x\ne 0,\, x':=x/\|x\|\in B_X\subseteq C,$ it follows $\|x'\|_C\le 1,$ that is  $\|x\|_C\le \|x\|.$ Denote by $\widehat X$ the completion of $X/N$ with respect to the quotient-norm $\|\cdot\|_{\widehat X}$ corresponding to $\|\cdot\|_C$, whose (unique) extension to $\widehat X$ is denoted also by $\|\cdot\|_{\widehat X}$.
It follows $\|\widehat x\|_{\widehat X}\le \|x\|$ for all $x\in X$, hence the embedding $j:X\to\widehat X$ is continuous and one shows that $j(X)$ is dense in $\widehat X$.  The space $\widehat X$ is called the \emph{Banach envelope} of the quasi-Banach space $X$.

We distinguish two situations.

\smallskip

I.  $X$ \emph{has trivial dual}: $X^*=\{0\}$.

In this case $C=\conv(B_X)=X$ (see \cite[Proposition 2.1, p. 16]{Kalton-F-sp}) and so  $\|\cdot\|_C\equiv 0,\ N=X$  and $X/X=\{\widehat 0\}$.
It follows $\widehat X=\{0\}$ and $\widehat X^*=\{0\}=X^*.$ In particular $\widehat{L^p}=\{0\}$, where $L^p=L^p[0,1].$

\smallskip

II.  $X$ \emph{has a separating dual}.

This means that for every $x\ne 0$ there exists $x^*\in X^*$ with $x^*(x)\ne 0$  (e.g. $X=\ell^p$ with $0<p<1$).
In this case $\|\cdot\|_C$ is a norm on $X$ which can be calculated  by the formula
\begin{equation}\label{eq.quasi-Banach-envel}
\|x\|_C=\sup\{|x^*(x)| : x^*\in X^*,\, \|x^*\|\le 1\}\,.\end{equation}
where the norm of $x^*\in X^*$ is given by \eqref{norm-quasi-Banach}.

Consequently $N=\{0\}$, $X/N=X$ and we can consider $X$ as a dense subspace of $\widehat X$ (in fact, continuously and densely embedded in $\widehat X$).

It follows that:
\begin{enumerate}
\item[\rm (i) ] every continuous linear functional  on $(X,\|\cdot\|)$ has a unique norm preserving extension to $(\widehat X,\|\cdot\|_{\widehat X})$;
\item[\rm (ii) ] every continuous linear operator $T$ from $(X,\|\cdot\|)$ to a Banach space $Y$ has a unique norm preserving extension  $\widehat T:(\widehat X,\|\cdot\|_{\widehat X})\to Y$. \end{enumerate}

Consequently $(X,\|\cdot\|)^*$  can be identified with $(\widehat X,\|\cdot\|_{\widehat X})^*$ and   the norm $\|\cdot\|_{\widehat X}$ can also be calculated by the formula \eqref{eq.quasi-Banach-envel} for all $x\in \widehat X$.

One shows that the Banach envelope of $\ell^p$ is $\ell^1$,  for every $0<p<1$.

Another way to define the Banach envelope in the case of a quasi-Banach space with separating dual is via the embedding $j_X$ of $X$ into its bidual $X^{**}$ (see \cite{Kalton-F-sp}). Since $X^*$ separates the points of $X$, it follows that $j_X$ is injective of norm  $\|j_X\|\le 1$ (in this case one can not prove that $\|j_X\|= 1$ because the Hahn-Banach extension theorem may fail for non-locally convex spaces).

By \eqref{eq.quasi-Banach-envel}
$$
\|x\|_C=\sup\{|x^*(x)| : \|x^*\|\le 1\}=\|j_X(x)\|_{X^{**}}\,,$$
so we can identify $\widehat X$ with the closure of $j_X(X)$ in $(X^{**},\|\cdot\|_{X^{**}})$

\subsection{Lipschitz functions and quasi-normed spaces}

It turns out that some results concerning Banach space-valued Lipschitz functions fail in  the quasi-Banach case and, in some cases,  the validity of some of them forces the quasi-Banach space  to be locally convex, i.e.  a Banach space.

In this subsection we consider only spaces over $\Real.$

Let  $(Z,d)$ be a b-metric space and $(Y,\|\cdot\|)$  a quasi-normed space.  A function $f:Z\to  Y$ is called Lipschitz if there exists $L\ge 0$ (called a Lipschitz constant for $f$) such that
 \begin{equation*}
 \|f(z)-f(z')\|\le L d(z,z')\,,
 \end{equation*}
 for all $z,z'\in Z$. One denotes by $\Lip(Z,Y)$ the   space of all   Lipschitz functions from $Z$ to $Y$.

The \emph{Lipschitz norm}  $\|f\|_L$ of  $f $ is  defined by
$$
\|f\|_L=\sup\left\{\frac{\|f(z)-f(z')\|}{d(z,z')} : z,z'\in Z,\, z\ne z'\right\}\,.$$

It follows that $\|f\|_L$ is the smallest Lipschitz constant for $f$.

Since $\|f\|=0$ if and only if $f=$ const, $\norm$ is actually only a seminorm on $\Lip(Z,Y)$.  To obtain a norm, one considers  a fixed element $z_0\in Z$ and the space
$$\Lip_0(Z,Y)=\{f\in\Lip(Z,Y) : f(z_0)=0\}\,.$$

If $Z=X$, where $X$ is a quasi-normed space, then one take $0$ for the fixed point $z_0$, and, in this case,  $\Lip_0(X,Y)$  is a quasi-normed space, that is,
$$
\|f+g\|_L\le k(\|f\|_L+\|g\|_L)\,,$$
for $f,g\in \Lip_0(X,Y)$, where $k\ge 1$ is the constant from (QN3). It is complete, provided $Y$ is a quasi-Banach space.

If $Y=\Real$, then one uses the notation $\Lip(X),\, \Lip_0(X)$ and $\Lip_0(X)$ is called the Lipschitz dual of the quasi-normed space $X$.

We noted  that the space $L^p=L^p[0,1]$ has trivial dual. F. Albiac \cite{albiac08} proved that it has also a trivial Lipschitz dual, i.e. $\Lip_0(L^p)=\{0\}.$ Later he showed that this is a more general phenomenon.

\begin{prop}[F. Albiac \cite{albiac11}]Let $(X,\|\cdot\|)$ be  a quasi-Banach space and
$$
|||x|||:=\sup\{f(x) : f\in\Lip_0(X),\, \|f\|_L\le 1\}\,, x\in X\,.$$

Then
\begin{enumerate}
\item[\rm (i)]\;\; $|||\cdot|||$ is a seminorm on $X$;
\item[\rm (ii)]\;\; if $\Lip_0(X)$ is nontrivial, then $X$ has a nontrivial dual, i.e. $X^*\ne\{0\}$;
\item[\rm (iii)]\;\; if $X$ has a separating Lipschitz dual, then $X$ has a separating (linear) dual and $|||\cdot|||$ is a norm on $X$.
\end{enumerate}\end{prop}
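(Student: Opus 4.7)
The plan is to prove part (i) by direct verification of the seminorm axioms, then deduce (ii) and (iii) via the Hahn--Banach extension theorem for sublinear functionals, which is purely algebraic and does not require any local convexity of the ambient quasi-Banach topology.

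For (i), I first observe that if $f\in\Lip_0(X)$ with $\|f\|_L\le 1$, then $|f(x)|=|f(x)-f(0)|\le\|x\|$, so $|||x|||\le\|x\|<\infty$ (and in particular $|||0|||=0$). For absolute homogeneity, given $\alpha\ne 0$, the assignment $f\mapsto g$ with $g(z):=f(\alpha z)/|\alpha|$ is a bijection of $\{f\in\Lip_0(X):\|f\|_L\le 1\}$ onto itself (inverse $g\mapsto|\alpha|g(\cdot/\alpha)$), and since $f(\alpha x)=|\alpha|g(x)$, taking suprema yields $|||\alpha x|||=|\alpha|\cdot|||x|||$. For the triangle inequality, the key trick is the translation $g(z):=f(z+y)-f(y)$: one checks $g\in\Lip_0(X)$ with $\|g\|_L=\|f\|_L$, so $f(x+y)=g(x)+f(y)\le|||x|||+|||y|||$, and taking the supremum over $f$ on the left delivers $|||x+y|||\le|||x|||+|||y|||$.

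For (ii), the hypothesis $\Lip_0(X)\ne\{0\}$ yields some $f_0\in\Lip_0(X)$ that is not identically zero, and after rescaling and possibly negating we may assume $\|f_0\|_L\le 1$ and $f_0(x_0)>0$ for some $x_0\in X$; hence $|||x_0|||>0$. Define $\ell_0:\Real x_0\to\Real$ by $\ell_0(tx_0)=t\,|||x_0|||$; the seminorm property gives $\ell_0(tx_0)\le|t|\cdot|||x_0|||=|||tx_0|||$, so the algebraic Hahn--Banach theorem extends $\ell_0$ to a linear $\ell:X\to\Real$ with $\ell\le|||\cdot|||$. Applying this bound to both $x$ and $-x$ gives $|\ell(x)|\le|||x|||\le\|x\|$, so $\ell$ is bounded on the quasi-norm unit ball and therefore continuous; since $\ell(x_0)=|||x_0|||>0$, this shows $X^*\ne\{0\}$. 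Part (iii) is then a routine strengthening: separation of points by $\Lip_0(X)$ forces $|||x|||>0$ for every $x\ne 0$ (pick $f\in\Lip_0(X)$ with $\|f\|_L\le 1$ and $f(x)\ne 0$, giving $|||x|||\ge|f(x)|>0$), so $|||\cdot|||$ is a norm; running the Hahn--Banach extension at each such $x$ produces $\ell\in X^*$ with $\ell(x)=|||x|||\ne 0$, showing $X^*$ separates points.

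The principal subtlety lies in the triangle inequality in (i): the translation trick $g(z)=f(z+y)-f(y)$ is what converts the genuinely nonlinear object $\Lip_0(X)$ into something compatible with a sublinear functional. Once (i) is established, everything else flows from the single observation that $|||\cdot|||$ is a seminorm dominated by $\|\cdot\|$, combined with the algebraic form of Hahn--Banach -- this is exactly the mechanism that circumvents the standard obstruction that the \emph{geometric} Hahn--Banach theorem may fail on non-locally-convex quasi-Banach spaces.
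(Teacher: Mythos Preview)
Your proof is correct. The paper does not actually prove this proposition --- it merely states it as a result of Albiac \cite{albiac11} --- so there is nothing to compare against. Your argument is the natural one: the translation trick $g(z)=f(z+y)-f(y)$ for the triangle inequality is the essential point in (i), and the use of the purely algebraic Hahn--Banach theorem (which requires only a dominating sublinear functional, not local convexity) together with the bound $|||\cdot|||\le\|\cdot\|$ is exactly what is needed to produce continuous linear functionals in (ii) and (iii).
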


One says that a family $\rd F$ of real valued functions on a quasi-normed  space $X$ is separating if for every $x\ne y$ in $X$ there exists $f\in\rd F$ with $f(x)\ne f(y).$

It is known that every Lipschitz function  $f$ from a subset   of a metric space $(X,d)$ to $\Real$ admits an extension to $X$ which is Lipschitz with the same Lipschitz constant  (McShane's extension theorem).  The following result shows that the validity of this result for every subset of a quasi-Banach space $X$ forces this space to be Banach.

\begin{prop}[\cite{albiac11}] Let $(X,\|\cdot\|)$ be  a quasi-Banach space. If for every subset $Z$ of $X$, every $L$-Lipschitz function $f:Z\to\mathbb{R}$ admits an $L'$-Lipschitz extension, for some $L'\ge L$, then the space $X$ is locally convex, i.e. it is a Banach space.
 \end{prop}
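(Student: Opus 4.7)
The plan is to use the extension hypothesis to upgrade the seminorm $|||x|||:=\sup\{f(x):f\in\Lip_0(X),\ \|f\|_L\le 1\}$ of the previous proposition into a norm equivalent to $\|\cdot\|$; once this is done, the $|||\cdot|||$-unit ball will be a convex, $\|\cdot\|$-bounded $\|\cdot\|$-neighborhood of $0$, and the criterion recalled in Subsection \ref{Ss.quasi-normed space} will force $X$ to be normable, hence a Banach space.

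The inequality $|||x|||\le\|x\|$ is automatic, since $f(x)=f(x)-f(0)\le\|f\|_L\,\|x\|\le\|x\|$ for any $f$ in the unit ball of $\Lip_0(X)$. To see that $|||\cdot|||$ is in fact a norm, I would apply the extension hypothesis to $Z=\{0,x\}$ and the $1$-Lipschitz function $f(0)=0$, $f(x)=\|x\|$, producing $\tilde f\in\Lip_0(X)$ with $\tilde f(x)=\|x\|$ and $\|\tilde f\|_L\le L'$ for some $L'\ge 1$; this yields $|||x|||\ge\|x\|/L'>0$ for every $x\ne 0$, which combined with part (i) of the previous proposition gives that $|||\cdot|||$ is a norm.

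The core of the argument is the reverse inequality $|||x|||\ge c\|x\|$ for some \emph{uniform} $c>0$. I would argue by contradiction: assuming a sequence $(x_n)$ in $X$ with $\|x_n\|=1$ and $|||x_n|||\to 0$, I would take the single countable subset $Z=\{0\}\cup\{x_n:n\in\Nat\}$ and the function $f$ defined by $f(0)=0$ and $f(x_n)=1$ for every $n$. This $f$ is $1$-Lipschitz on $Z$, since $|f(x_n)-f(0)|=1=\|x_n\|$ while $f$ is constant on $\{x_n:n\in\Nat\}$. Applying the extension hypothesis yields $\tilde f\in\Lip_0(X)$ with Lipschitz constant $L'$ and $\tilde f(x_n)=1$ for every $n$; then $\tilde f/L'\in\Lip_0(X)$ has Lipschitz norm at most $1$ and $(\tilde f/L')(x_n)=1/L'$, forcing $|||x_n|||\ge 1/L'>0$ uniformly in $n$, a contradiction.

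The step I expect to be the main obstacle — and the decisive move that unlocks the argument — is packaging the entire problematic sequence $(x_n)$ into one subset so that a single invocation of the extension hypothesis produces a single Lipschitz constant $L'$ that controls all the $x_n$ simultaneously. This sidesteps the Baire-category manoeuvre one would otherwise need to convert the pointwise lower bounds $|||x|||\ge\|x\|/L'(x)$ into a uniform one. Once the equivalence of $|||\cdot|||$ and $\|\cdot\|$ is in hand, $\{x:|||x|||\le 1\}$ is convex, contains $\{x:\|x\|\le 1\}$ (so it is a $\|\cdot\|$-neighborhood of $0$), and is $\|\cdot\|$-bounded by equivalence, supplying the bounded convex $0$-neighborhood that, by the criterion recalled in Subsection \ref{Ss.quasi-normed space}, makes $X$ a Banach space.
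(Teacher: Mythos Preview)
The paper does not supply its own proof of this proposition; it is simply quoted from \cite{albiac11} with no argument given, so there is nothing in the paper to compare your proposal against.

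That said, your argument is correct and self-contained. The decisive observation --- putting the entire offending sequence $(x_n)$ into a single set $Z=\{0\}\cup\{x_n\}$ and applying the extension hypothesis once to the $1$-Lipschitz function $f(0)=0,\ f(x_n)=1$ --- is exactly what is needed: the hypothesis only promises \emph{some} $L'$ depending on $Z$ and $f$, so a pointwise application to each $x_n$ separately would give constants $L'_n$ that could blow up, whereas your single application yields one $L'$ valid for all $n$. The finish is also clean: convexity of $\{|||x|||\le 1\}$ comes from part (i) of the preceding proposition, it contains the closed $\|\cdot\|$-unit ball (hence is a $\|\cdot\|$-neighborhood of $0$, as recorded in Subsection~\ref{Ss.quasi-normed space}), and the equivalence $c\|x\|\le|||x|||\le\|x\|$ makes it $\|\cdot\|$-bounded, so the normability criterion applies.
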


 It is known that every continuous linear operator from a quasi-Banach space $X$ to a Banach space $Y$ admits a norm preserving linear extension to the Banach envelope $\widehat X$ of $X$ to $Y$.  F. Albiac \cite{albiac11} has shown that this is true for Lipschitz mappings too: every Lipschitz mapping $f:X\to Y$ admits a unique Lipschitz  extension with the same Lipschitz constant  $\widehat f:\widehat X\to Y$.

Moreover, if $X,Y$ are normed spaces and $f:X\to Y$ is G\^ateaux differentiable on the interval $[x,y]:=\{x+t(y-x) : t\in[0,1]\},$ then
\begin{equation}\label{eq1.G-MVT}
\|f(x)-f(y)\|\le \|x-y\|\sup\{\|f'(\xi)\| :\xi\in[x,y]\}\,.\end{equation}
\begin{prop}[\cite{albiac11}] Let $(X,\|\cdot\|)$ be  a quasi-Banach space. If every nonconstant G\^ateaux differentiable Lipschitz function $f:[0,1]\to X$ satisfies the mean value inequality \eqref{eq1.G-MVT} for all $x,y\in [0,1]$, then the space $X$ is locally convex, i.e. it is a Banach space.
\end{prop}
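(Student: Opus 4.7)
I argue by contrapositive. Assume the mean value inequality \eqref{eq1.G-MVT} holds for every G\^ateaux differentiable Lipschitz function $f:[0,1]\to X$; I will show $\conv(B_X)$ is bounded in $(X,\|\cdot\|)$, which (being quasi-Banach) forces $X$ to be locally convex. Throughout, let $k\ge 1$ denote the modulus of concavity from (QN3).

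Fix any $v=\sum_{i=1}^n\lambda_ix_i$ with $\lambda_i>0$, $\sum\lambda_i=1$, $\|x_i\|\le 1$; put $t_i=\lambda_1+\cdots+\lambda_i$, $I_i=[t_{i-1},t_i]$. Let $f_0:[0,1]\to X$ be the piecewise-linear path with $f_0(0)=0$ and constant slope $x_i$ on $I_i$, so $f_0(1)=v$ and $f_0$ has $n-1$ corners at the interior $t_i$. Choose $\epsilon\in(0,\tfrac12\min_i\lambda_i)$ so the windows $W_i=[t_i-\epsilon,t_i+\epsilon]$ are pairwise disjoint, and on each $W_i$ replace $f_0$ by the cubic Hermite interpolant with boundary values $f_0(t_i\pm\epsilon)$ and boundary slopes $x_i,x_{i+1}$. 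Using the identity $f_0(t_i+\epsilon)-f_0(t_i-\epsilon)=\epsilon(x_i+x_{i+1})$ to eliminate $f_0(t_i+\epsilon)$, the interpolant collapses on $W_i$ to
$$\tilde f(t)=f_0(t_i-\epsilon)+\epsilon(2\sigma-\sigma^2)x_i+\epsilon\sigma^2 x_{i+1},\qquad \sigma=\tfrac{t-(t_i-\epsilon)}{2\epsilon}\in[0,1],$$
with derivative $\tilde f'(t)=(1-\sigma)x_i+\sigma x_{i+1}$. This matches the one-sided slopes $x_i,x_{i+1}$ of $f_0$ at $t_i\mp\epsilon$, so $\tilde f$ is globally $C^1$ on $[0,1]$ (in particular G\^ateaux differentiable), $\tilde f(0)=0$, $\tilde f(1)=v$, and by (QN3) one has $\|\tilde f'(t)\|\le k\bigl((1-\sigma)\|x_i\|+\sigma\|x_{i+1}\|\bigr)\le k$ on each window, while $\|\tilde f'\|=\|x_i\|\le 1$ outside the windows. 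Thus $\sup_{\xi\in[0,1]}\|\tilde f'(\xi)\|\le k$, uniformly in $n$, $v$, $\epsilon$.

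To apply the MVI one needs $\tilde f$ Lipschitz. Writing $\tilde f(t)=\sum_{j=1}^n c_j(t)x_j$ from the piecewise formulas, one verifies that each $c_j(\cdot)$ is monotone non-decreasing on $[0,1]$ with $\sum_j c_j(t)=t$; hence for $s\le t$, $\tilde f(t)-\tilde f(s)=\sum_j\delta_j x_j$ with $\delta_j=c_j(t)-c_j(s)\ge 0$ and $\sum_j\delta_j=t-s$. Iterated (QN3) on this finite sum gives $\|\tilde f(t)-\tilde f(s)\|\le k^{\lceil\log_2 n\rceil}(t-s)$, an $n$-dependent but finite Lipschitz constant, which is all that is required. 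The MVI now yields
$$\|v\|=\|\tilde f(1)-\tilde f(0)\|\le 1\cdot\sup_\xi\|\tilde f'(\xi)\|\le k,$$
a bound independent of $n$ and of the chosen representation of $v$. Since $v\in\conv(B_X)$ was arbitrary, $\conv(B_X)\subseteq kB_X$, so $(X,\|\cdot\|)$ is locally convex.

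The technical heart of the argument is the Hermite interpolation: it is chosen precisely so as to preserve the endpoints $\tilde f(0)=0$ and $\tilde f(1)=v$ exactly and, at the same time, to reduce $\tilde f'$ to a genuine convex combination $(1-\sigma)x_i+\sigma x_{i+1}$ of unit-ball vectors whose norm is bounded by $k$ uniformly in $n$, $v$, $\epsilon$. Simpler smoothings — linear interpolation between the window endpoints, or convolution with a bump — yield derivative norms involving $1/\epsilon$ or $\|f_0(t_i\pm\epsilon)\|$, which destroys the uniform bound that drives the conclusion $\|v\|\le k$.
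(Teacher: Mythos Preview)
The paper does not prove this proposition; it is merely quoted from \cite{albiac11} and immediately followed by the unrelated discussion of H\"older functions, so there is no in-paper argument to compare your attempt against.

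That said, your argument is correct and self-contained. The key step---that the cubic Hermite interpolant through $f_0(t_i\pm\epsilon)$ with endpoint slopes $x_i,x_{i+1}$ degenerates to the quadratic $p_0+\epsilon(2\sigma-\sigma^2)x_i+\epsilon\sigma^2x_{i+1}$ whose derivative is the genuine convex combination $(1-\sigma)x_i+\sigma x_{i+1}$---checks out directly from the Hermite basis identities $h_{00}+h_{01}=1$, $2h_{10}+h_{01}=2\sigma-\sigma^2$, $h_{01}+2h_{11}=\sigma^2$, and is precisely what makes $\sup\|\tilde f'\|\le k$ uniform in $n$. The monotonicity of the coefficient functions $c_j$ and the identity $\sum_jc_j(t)=t$ follow from the derivative formula on each piece, and the resulting $n$-dependent Lipschitz bound $k^{\lceil\log_2 n\rceil}$ is all you need to place $\tilde f$ in the class to which the hypothesis applies. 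The conclusion $\conv(B_X)\subseteq kB_X$ then yields a bounded convex $0$-neighborhood, hence local convexity by the criterion recalled in the paper. Two minor remarks: your opening phrase ``by contrapositive'' is inaccurate (you argue directly), and you should note that if $v=0$ the bound is trivial, so the nonconstancy requirement on $\tilde f$ is harmless.
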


Let  $\alpha>0.$ A function $f:(X_1,d_1)\to(X_2,d_2)$ between two b-metric spaces $X_1,X_2$ is called \emph{H\"older of order} $\alpha$ if there exists $L\ge 0$ such that
\begin{equation}\label{def.Hold-fcs}
d_2(f(x),f(y))\le Ld_1(x,y)^\alpha\,,
\end{equation}
for all $x,y\in X_1.$  As a consequence of the mean value theorem, every function $f$ from $[0,1]$ to a Banach space $X$ which is H\"older of order $\alpha>1$
 is constant, a fact that is no longer true if $X$ is a quasi-Banach space.
\begin{example} Let $L^p=L^p[0,1]$ for $0<p<1.$  The function $f:[0,1]\to L^p$ given by $f(t)=\chi_{[0,t]}$ satisfies the equality
$$
\|f(s)-f(t)\|_p =|s-t|^{1/p}\,,$$
for all $s,t\in [0,1]$, where $\|\cdot\|_p$ is the $L^p$-norm (see \eqref{def.Lp-qn}).
\end{example}
Indeed, for $0\le t<s\le 1,$
$$
\|f(s)-f(t)\|_p=\left(\int_t^s\chi^p_{(t,s]}(u) du\right)^{1/p}=|s-t|^{1/p}\,.$$

The Riemann integral of  a function $f:[a,b]\to X$, where $[a,b]$ is an interval in $\mathbb{R}$ and  $X$ is a Banach space, can be defined as in the real case, by simply replacing the absolute value $|\cdot|$ with  the norm sign $\|\cdot\|$, and has properties similar to those from the real case. For instance, the following result is true.

\begin{prop}[\cite{albiac11}] Let $X$ be a Banach space.
 If $f:[a,b]\to X$ is continuous, then
  \begin{enumerate}
  \item[\rm(i) ] $f$ is Riemann integrable, and
  \item[\rm(ii) ] the function \begin{equation}\label{eq1.R-int}
  F(t)=\int_a^tf(s)ds,\, t\in[a,b]\,,\end{equation}
   is differentiable with $F'(t)=f(t)$ for all $t\in[a,b]$.
  \end{enumerate}\end{prop}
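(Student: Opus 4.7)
The plan is to adapt the standard real-variable proof, replacing sup/inf arguments with Cauchy-net arguments that exploit completeness of $X$. For part (i), I would work with tagged partitions $(P,\xi)$, where $P=\{a=t_0<\dots<t_n=b\}$ and $\xi_i\in[t_{i-1},t_i]$, forming Riemann sums $S(f,P,\xi)=\sum_{i=1}^n f(\xi_i)(t_i-t_{i-1})$, and show that these converge in $X$ as the mesh $\|P\|=\max_i(t_i-t_{i-1})$ tends to $0$.

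First I would invoke uniform continuity of $f$ on the compact interval $[a,b]$: for any $\epsilon>0$ there exists $\delta>0$ such that $\|f(s)-f(t)\|<\epsilon$ whenever $|s-t|<\delta$. Let $(P_1,\xi^{(1)})$ and $(P_2,\xi^{(2)})$ be two tagged partitions with meshes less than $\delta$, and let $P''=P_1\cup P_2$ be their common refinement, equipped with some tags $\xi''$. Comparing $S(f,P_1,\xi^{(1)})$ with $S(f,P'',\xi'')$ on each subinterval $[t_{i-1},t_i]$ of $P_1$ (which $P''$ subdivides into pieces $[s_{j-1},s_j]$, each of length at most $\|P_1\|<\delta$), I would use
\[
f(\xi^{(1)}_i)(t_i-t_{i-1})-\sum_j f(\xi''_j)(s_j-s_{j-1})=\sum_j\bigl(f(\xi^{(1)}_i)-f(\xi''_j)\bigr)(s_j-s_{j-1}),
\]
which by uniform continuity has norm at most $\epsilon(t_i-t_{i-1})$. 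Summing over $i$ gives $\|S(f,P_1,\xi^{(1)})-S(f,P'',\xi'')\|\le\epsilon(b-a)$, and symmetrically for $P_2$, so the triangle inequality yields $\|S(f,P_1,\xi^{(1)})-S(f,P_2,\xi^{(2)})\|\le 2\epsilon(b-a)$. Thus the Riemann sums form a Cauchy net, and completeness of $X$ produces a limit, which is by definition $\int_a^b f(s)\,ds$.

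For part (ii), the argument of (i) applies to any subinterval $[u,v]\subseteq[a,b]$, so $F(t)=\int_a^t f(s)\,ds$ is well defined for every $t\in[a,b]$, and the additivity $\int_a^{t+h}=\int_a^t+\int_t^{t+h}$ follows by taking limits of Riemann sums compatible with the partition point $t$. Fix $t\in[a,b]$; for $h\ne 0$ with $t+h\in[a,b]$, write
\[
F(t+h)-F(t)-hf(t)=\int_t^{t+h}\bigl(f(s)-f(t)\bigr)ds,
\]
using $\int_t^{t+h}f(t)\,ds=hf(t)$. Passing to the limit in the pointwise bound $\bigl\|\sum_i g(\xi_i)(t_i-t_{i-1})\bigr\|\le\sup_s\|g(s)\|\cdot|h|$ gives $\|F(t+h)-F(t)-hf(t)\|\le|h|\cdot\omega(h)$, where $\omega(h)=\sup\{\|f(s)-f(t)\|:s\in[a,b],\,|s-t|\le|h|\}\to 0$ as $h\to 0$ by continuity of $f$ at $t$. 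Dividing by $|h|$ yields $F'(t)=f(t)$.

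The main obstacle is really the bookkeeping in the common-refinement comparison; no new idea beyond the real case is needed. The argument transfers verbatim because only the triangle inequality, scalar multiplication by $t_i-t_{i-1}\in\mathbb R$, and completeness are used — no order structure, duality, or Hahn–Banach extension is invoked. This is also why the proposition would typically fail in the quasi-Banach setting of Subsection~\ref{Ss.quasi-normed space}, where (as recalled earlier) even the mean value inequality can fail.
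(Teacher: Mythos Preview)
Your argument is correct and is exactly the standard proof one would expect: uniform continuity on the compact interval gives the Cauchy property of the Riemann sums via the common-refinement trick, completeness of $X$ produces the integral, and the usual difference-quotient estimate yields the fundamental theorem. There is nothing to compare against, however, because the paper does not supply a proof of this proposition at all: it is stated as a well-known classical fact (the preceding sentence says the Banach-space Riemann integral ``can be defined as in the real case \ldots\ and has properties similar to those from the real case''), and the text moves on immediately to a remark about the failure of the Lebesgue criterion in infinite dimensions. So your write-up simply fills in what the paper leaves to the reader.

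One small caveat on your closing remark: the reason the argument breaks down in the quasi-Banach setting is not really the failure of the mean value inequality, but the failure of the ordinary triangle inequality in the Riemann-sum comparison. In a quasi-normed space the estimate $\bigl\|\sum_j (f(\xi_i^{(1)})-f(\xi_j''))(s_j-s_{j-1})\bigr\|\le \sum_j\|f(\xi_i^{(1)})-f(\xi_j'')\|(s_j-s_{j-1})$ is no longer available; the relaxed triangle inequality introduces a constant depending on the number of summands, which is unbounded as the partition is refined. This is precisely the mechanism behind the Mazur--Orlicz result quoted just after the proposition.
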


\begin{remark} However, there is a point  where this analogy is broken, namely the Lebesgue criterion of Riemann integrability: a function $f:[a,b]\to\mathbb{R}$ is Riemann integrable if and only if it is continuous almost everywhere on $[a,b]$ (i.e. excepting a set of Lebesgue measure zero). In the infinite dimensional  case this criterion does not hold in general, leading to the study  of those Banach spaces for which it, or some weaker forms, are true , see, for instance, \cite{gordon91}, \cite{sofi12}, \cite{sofi16b} and the references quoted therein.
\end{remark}

In the case of quasi-Banach spaces the situation is different. By a result attributed to S. Mazur and W. Orlicz \cite{maz-orlicz48} (see also \cite[p. 122]{Rolew-MLS}) an $F$-space $X$ is locally convex if and only if every continuous function $f:[0,1]\to X$ is Riemann integrable.

M. M. Popov \cite{popov94} investigated the Riemann integrability of functions defined on intervals in $\mathbb{R}$ with values in an $F$-space. Among other results, he proved that a Riemann  integrable function $f:[a,b]\to X$  is bounded and that the function $F$ defined by \eqref{eq1.R-int} is uniformly continuous, but there exists a continuous function $f:[0,1]\to\ell^p$, where $0<1<p$, such that the function $F$ does not have a right derivative at $t=0.$ He asked whether any continuous function $f$ from $[0,1]$ to $L^p[0,1], \,0<p<1,$ (or more general, to a quasi-Banach space $X$ with $X^*=\{0\}$) admits a primitive. This problem was solved by N. Kalton \cite{kalton96} who proved that if $X$ is a quasi-Banach space with $X^*=\{0\}$, then every continuous function $f:[0,1]\to X$ has a primitive. Kalton considered the space $C^1_{Kal}(I,X),\,$ where $ I=[0,1]$ and $X$ is a quasi-Banach space, of all continuously differentiable functions $f:I\to X$ such that the function $\tilde f:I^2\to X$ given for $s,t\in I$ by
$\tilde f(t,t)=f'(t)$ and $\tilde f(s,t)=(f(s)-f(t))/(s-t)$ if $s\ne t$, is continuous. It follows that  $C^1_{Kal}(I,X)$  is a  quasi-Banach space with respect to the quasi-norm
$$
\|f\|=\|f(0)\|+\|f\|_L\,.$$

The notation $C^1_{Kal}(I,X)$ was introduced in \cite{albiac-ans12a}; Kalton used the notation $C^1(I;X)$.

Denote by $C(I,X)$ the Banach space (with respect to the sup-norm) of all continuous functions from $I$ to $X$. The \emph{core} of a quasi-Banach space $X$ is the  maximal subspace $Z$ of $X$ (denoted by core$(X)$) with $Z^*=\{0\}$. One shows that such a subspace always exists, is unique and closed. Notice that core$(X)=\{0\}$  implies only that $X$ has a nontrivial dual, but not necessarily a separating one.

In \cite{albiac-ans12b} it is shown that if $X$ is a quasi-Banach space with core$(X)=\{0\}$, then there exists  a continuous function $f:[0,1]\to X$ failing to have a primitive.

Kalton, \emph{op. cit}., called a quasi-Banach $X$ a $D$-\emph{space} if the mapping  $$D:C^1_{Kal}(I,X)\to C(I,X)\,,$$ given by $Df=f',$    is surjective and proved the following result.
\begin{theo}[\cite{kalton96}] Let $X$ be a quasi-Banach with core$(X)=\{0\}$. Then $X$ is a $D$-space if and only if $X$ is locally convex (or, equivalently, a Banach space).
\end{theo}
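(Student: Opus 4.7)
The plan is to separate the two implications and to use the open mapping theorem for $F$-spaces in the harder direction. For the easy implication (locally convex $\Longrightarrow$ $D$-space), assume $X$ is a Banach space. Then by the Mazur--Orlicz characterization recalled in the excerpt, every continuous $f:I\to X$ is Riemann integrable, so $F(t):=\int_0^t f(s)\,ds$ is well defined, and the fundamental theorem of calculus gives $F'=f$. Uniform continuity of $f$ on the compact interval $I$ lets one verify that the difference-quotient map $\widetilde F$ extends continuously to the diagonal, so $F\in C^1_{Kal}(I,X)$ and the operator $D$ is surjective.

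For the converse, suppose $X$ is a $D$-space. Both $C^1_{Kal}(I,X)$ and $C(I,X)$ are quasi-Banach (hence $F$-) spaces, and $D$ is linear, surjective and continuous, because $\|f'\|_\infty\le\|f\|_L\le\|f(0)\|+\|f\|_L=\|f\|$. By the open mapping theorem for $F$-spaces, $D$ is open, so there exists $K\ge 1$ such that every $g\in C(I,X)$ has a primitive $F\in C^1_{Kal}(I,X)$ with
\[
\|F(0)\|+\|F\|_L\le K\,\|g\|_\infty,
\]
and consequently $\|F(1)-F(0)\|\le\|F\|_L\le K\|g\|_\infty$.

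The core of the proof is to feed carefully chosen ``smoothed step functions'' into this inequality. Given $x_1,\dots,x_n\in B_X$ and nonnegative weights $\lambda_i$ with $\sum\lambda_i=1$, partition $I$ into consecutive intervals of lengths $\lambda_i$ and, for small $\eta>0$, build a continuous $g_\eta:I\to X$ which equals $x_i$ on most of the $i$-th subinterval and linearly interpolates across thin transition strips of total width $\eta$. Then $\|g_\eta\|_\infty$ is controlled by a constant (depending only on the modulus of concavity of $\norm$) times $\max_i\|x_i\|\le 1$, while the primitive $F_\eta$ of $g_\eta$ satisfies $F_\eta(1)-F_\eta(0)\to\sum_i\lambda_i x_i$ as $\eta\downarrow 0$. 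Passing to the limit in the previous inequality should yield a universal bound
\[
\bigl\|\textstyle\sum_i\lambda_i x_i\bigr\|\le K'\max_i\|x_i\|,
\]
which says $\conv(B_X)\sse K'\,B_X$, i.e.\ $\conv(B_X)$ is a bounded convex neighbourhood of $0$; by the locally bounded characterisation of quasi-norms recalled in Subsection~\ref{Ss.quasi-normed space}, this forces $X$ to be normable, hence a Banach space.

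The main obstacle is establishing the convergence $F_\eta(1)-F_\eta(0)\to\sum_i\lambda_i x_i$ in the quasi-norm of $X$, since in an arbitrary quasi-Banach space Riemann sums need not converge to the integral. This is precisely where the hypothesis $\operatorname{core}(X)=\{0\}$ has to enter: without it, the pathology of $L^p$, $0<p<1$, occurs, where continuous functions do have primitives (so $X$ is a $D$-space) but the ``approximation errors'' sit in a subspace invisible to $X^*$ and do not vanish in quasi-norm. The delicate part of the proof is therefore to exploit $\operatorname{core}(X)=\{0\}$ to rule out such hidden errors, either by testing the approximation against a sufficiently rich family of functionals in $X^*$ on successive closed subspaces, or by reducing to the Banach envelope $\widehat X$ where the classical Riemann calculus applies and then transferring the norm control back to $X$.
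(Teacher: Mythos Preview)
The paper does not contain a proof of this theorem; it merely states the result with a citation to Kalton~\cite{kalton96}. So there is no ``paper's own proof'' to compare your proposal against.

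That said, your sketch is on the right track but stops short precisely where the argument needs to be made sharp. You correctly identify that the open mapping theorem yields a primitive $F_\eta$ with $\|F_\eta(1)-F_\eta(0)\|\le K\|g_\eta\|_\infty$, and that the problem is to identify $F_\eta(1)-F_\eta(0)$ with (something close to) $\sum_i\lambda_i x_i$. But your proposed mechanisms for using $\operatorname{core}(X)=\{0\}$ --- testing against functionals, or passing to the Banach envelope --- are not what is needed and would not close the gap: $\operatorname{core}(X)=\{0\}$ does not guarantee a separating dual, so functionals may still be too few, and the Banach envelope map need not be injective on the relevant data.

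The clean way in is already sitting in the paper, in the remark immediately following Kalton's theorem on curves with zero derivative: a function $h:[0,1]\to X$ with $h'\equiv 0$ and $h(0)=0$ has $h(1)\in\operatorname{core}(X)$. Under the hypothesis $\operatorname{core}(X)=\{0\}$ this forces $h$ to be constant, so \emph{primitives are unique up to an additive constant}. Your $g_\eta$ takes values in the finite-dimensional subspace $\operatorname{span}\{x_1,\dots,x_n\}$, where ordinary calculus applies and the explicit primitive $G_\eta(t)=\int_0^t g_\eta$ is available; uniqueness then gives $F_\eta(1)-F_\eta(0)=G_\eta(1)=\int_0^1 g_\eta=\sum_i\lambda_i x_i+O(\eta)$, with the error term controlled in the finite-dimensional subspace. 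Letting $\eta\downarrow 0$ now yields the bound $\|\sum_i\lambda_i x_i\|\le C\max_i\|x_i\|$ you want. Once you insert this uniqueness step explicitly, your argument becomes complete.
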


It is known that every continuously differentiable function from an interval $[a,b]\subseteq\mathbb{R}$ to a Banach space $X$ is Lipschitz with $\|f\|_L=\sup\{\|f'(t)\|:t\in[a,b]\}$ (a consequence of the Mean Value Theorem, see \eqref{eq1.G-MVT}). As it was shown in \cite{albiac-ans12a} this in no longer true in quasi-Banach spaces.
\begin{theo} Let $X$ be a non-locally convex quasi-Banach space $X$. Then there exists a function $F:I\to X$ such that:
\begin{enumerate}
\item[\rm (i) ] $F$ is continuously differentiable on $I$;
\item[\rm (ii) ] $F'$ is Riemann integrable on $I$ and $F(t)=\int_0^tF'(s)ds,\, t\in I$;
\item[\rm (iii) ] $F$ is not Lipschitz  on $I$.\end{enumerate}
\end{theo}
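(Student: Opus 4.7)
The plan is to construct $F$ as a smooth interpolation between many small, carefully chosen increments, exploiting the fact that in a non-locally-convex quasi-Banach space, vectors that are individually tiny can have a disproportionately large sum. First, by the Aoki--Rolewicz theorem I may replace the quasi-norm on $X$ by an equivalent $p$-norm, and because $X$ is not locally convex the relevant $p$ satisfies $0<p<1$ strictly.

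The crucial consequence of non-local convexity is that $\conv(B_X)$ is unbounded, hence for every $\delta>0$ and every $M>0$ one can find $k\in\Nat$ and $y_1,\dots,y_k\in X$ with $\max_j\|y_j\|\le\delta$ and $\bigl\|\sum_{j=1}^k y_j\bigr\|\ge M$. Indeed, start with a convex combination $\sum\lambda_ix_i$ of norm greater than $M$ with $\|x_i\|\le 1$, set $y_i=\lambda_ix_i$, and subdivide each $y_i$ into enough equal pieces to force the individual norms below $\delta$.

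Next I partition $I=[0,1]$ into disjoint intervals $J_n=[\alpha_{n-1},\alpha_n]$ of lengths $\ell_n$ accumulating at $1$, subdivide each $J_n$ into $k_n$ equal pieces, and apply the auxiliary statement above to obtain vectors $y_1^{(n)},\dots,y_{k_n}^{(n)}$ with $\max_j\|y_j^{(n)}\|\le\delta_n$ and $N_n:=\bigl\|\sum_j y_j^{(n)}\bigr\|\ge 1$. I then set
\[
F(t)=\sum_{n,j} y_j^{(n)}\,\psi_{n,j}(t),
\]
where $\psi_{n,j}:I\to[0,1]$ is a $C^1$ cutoff that vanishes to the left of the $j$-th piece of $J_n$, rises monotonically to $1$ on that piece with $\psi_{n,j}'$ vanishing at both endpoints, and equals $1$ thereafter. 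The parameters are to be chosen so that $\sum_n N_n^p<\infty$ (hence $F$ converges in $X$ and is continuous), $\delta_n k_n/\ell_n\to 0$ (hence $\sup_{J_n}\|F'\|\to 0$, making $F'$ continuous at $t=1$), and $N_n/\ell_n\to\infty$ (giving the failure of the Lipschitz property). The $p$-norm bound $N_n\le k_n^{1/p}\delta_n$ forces $\delta_n k_n\ge N_n k_n^{1-1/p}$, and the three requirements are compatible precisely because $1-1/p<0$, so $k_n^{1-1/p}\to 0$ as $k_n\to\infty$; a concrete admissible choice is $\ell_n=n^{-3/p}$, $N_n=n^{-2/p}$, together with $k_n$ large enough that $\delta_n k_n\ll\ell_n$.

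Given the construction, (iii) follows from $\|F(\alpha_n)-F(\alpha_{n-1})\|/\ell_n=N_n/\ell_n\to\infty$, and (i) follows by direct differentiation inside each sub-interval combined with the bound on $\sup_{J_n}\|F'\|$. The main obstacle is (ii): in a non-locally-convex quasi-Banach space, continuity of $F'$ does \emph{not} imply Riemann integrability (by Mazur--Orlicz), so this is the delicate step. I plan to verify it by noting that $F'$ is supported on $\bigcup_n J_n$ and that on each $J_n$ a Riemann sum of $F'$ telescopes (up to a bump-by-bump error) to $F(\alpha_n)-F(\alpha_{n-1})$, so the total error has its $p$-th power bounded by a sum controlled by $\sum_n N_n^p<\infty$. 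Fine-tuning the parameters so that this error estimate is uniform in the mesh and yields $\int_0^tF'(s)\,ds=F(t)$ for every $t\in I$ is what I expect to be the technical heart of the argument.
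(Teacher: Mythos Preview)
The paper does not contain a proof of this theorem; it is quoted from Albiac and Ansorena \cite{albiac-ans12a} without argument, so there is no in-text proof to compare against. Your outline is very much in the spirit of the original construction: use non-local-convexity to place on successive blocks $J_n$ families of vectors whose individual sizes (controlling $\|F'\|$) are small while their sum (controlling the increment of $F$ across $J_n$) is large relative to $\ell_n$.

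Two points need repair before the sketch can be completed. First, there is an internal contradiction: you set $N_n:=\|\sum_j y_j^{(n)}\|\ge 1$ and then demand $\sum_n N_n^p<\infty$ and even specify $N_n=n^{-2/p}$. The condition $N_n\ge 1$ must be dropped; otherwise $F(\alpha_n)=\sum_{m\le n}\sum_j y_j^{(m)}$ diverges and $F$ cannot be extended continuously to $t=1$. Second, the phrase ``$k_n$ large enough that $\delta_n k_n\ll\ell_n$'' hides the issue that the auxiliary lemma does not let you choose $k_n$ freely, and further subdivision does not decrease $\delta_n k_n$. What actually controls $\sup_{J_n}\|F'\|$ is $\sum_j\|y_j^{(n)}\|/\ell_n$ (take the sub-intervals of $J_n$ of lengths proportional to $\|y_j^{(n)}\|$, not equal); since a convex combination of unit vectors gives $\sum_j\|y_j\|\le 1$ before rescaling, scaling by a factor $c_n$ yields $\sup_{J_n}\|F'\|\lesssim c_n/\ell_n$ while $N_n\ge c_nA_n$ with $A_n\to\infty$. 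Choosing $c_n,\ell_n$ so that $c_n/\ell_n\to 0$, $c_nA_n/\ell_n\to\infty$ and $\sum_n(c_nA_n)^p<\infty$ is then routine. With this corrected bookkeeping your plan for (i) and (iii) goes through, and your intended argument for (ii) --- Riemann integrability on each finite block (values in a finite-dimensional subspace) plus a $p$-norm tail estimate controlled by $\sum_n N_n^p$ --- is the natural route.
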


In \cite{albiac-ans13} it is proved that the usual rule of the calculation of the integral (called Barrow's rule by the authors, known also as Leibniz rule) holds in the quasi-Banach case in the following form.
\begin{prop} Let $X$ be a quasi-Banach with separating dual. If $F:[a,b]\to X$ is differentiable with Riemann integrable derivative, then
$$
\int_a^bF'(t) dt=F(b)-F(a)\,.$$
\end{prop}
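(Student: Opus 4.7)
The plan is to reduce the problem to the classical scalar fundamental theorem of calculus by post-composing with continuous linear functionals, and then to exploit the separating dual hypothesis to lift the resulting scalar identity back to an identity in $X$.

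First, I would fix an arbitrary $x\te \in X\te$ and verify that the scalar function $g := x\te \circ F : [a,b] \to \Real$ is differentiable on $[a,b]$ with $g'(t) = x\te(F'(t))$. This follows from the continuity (equivalently, boundedness) of the functional $x\te$: dividing the identity $x\te(F(t+h)-F(t)) = g(t+h) - g(t)$ by $h$ and letting $h\to 0$ uses $x\te$ to transport the $X$-convergence of the difference quotients to scalar convergence.

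Next, I would check that whenever $(\sigma_n)$ is a sequence of Riemann sums for $F'$ over tagged partitions of $[a,b]$ with mesh tending to zero, so that $\sigma_n \to \int_a^b F'(t)\,dt$ in the quasi-norm of $X$, then $x\te(\sigma_n)$ is a sequence of Riemann sums for $g' = x\te\circ F'$ in $\Real$, and by continuity of $x\te$ it converges to $x\te\bigl(\int_a^b F'(t)\,dt\bigr)$. Hence $g'$ is Riemann integrable on $[a,b]$ and
\begin{equation*}
\int_a^b g'(t)\,dt = x\te\left(\int_a^b F'(t)\,dt\right).
\end{equation*}
Applying the classical fundamental theorem of calculus in $\Real$ (valid for any differentiable real function with Riemann integrable derivative) to $g$ yields
\begin{equation*}
\int_a^b g'(t)\,dt = g(b) - g(a) = x\te\bigl(F(b) - F(a)\bigr),
\end{equation*}
and combining these equalities gives $x\te\bigl(\int_a^b F'(t)\,dt - (F(b)-F(a))\bigr) = 0$ for every $x\te\in X\te$. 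Since $X\te$ separates the points of $X$, this forces
\begin{equation*}
\int_a^b F'(t)\,dt = F(b) - F(a)
\end{equation*}
in $X$.

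The main subtlety is the commutation of $x\te$ with the Riemann integral in the second step: the Riemann integral in a non-locally-convex quasi-Banach space can behave pathologically, as emphasized repeatedly in the preceding discussion, but the very definition of Riemann integrability as quasi-norm convergence of Riemann sums, together with the continuity of $x\te$ as a scalar functional, is exactly what makes the commutation routine. Thus the non-local-convexity of $X$ plays no role so long as the integral in question exists and the dual separates points — and the latter hypothesis is clearly indispensable, since without it the argument has nothing to test the candidate equality against.
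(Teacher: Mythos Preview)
Your argument is correct: composing with functionals reduces the identity to the scalar fundamental theorem of calculus, the commutation of $x\te$ with the Riemann integral is justified exactly as you describe (continuity of $x\te$ carries quasi-norm convergence of Riemann sums to scalar convergence, and this holds for \emph{every} tagged partition with small mesh, so $x\te\circ F'$ is genuinely Riemann integrable), and the separating-dual hypothesis then forces the vector identity. There is nothing to compare against, however: the paper does not supply its own proof of this proposition but simply quotes it as a result of Albiac and Ansorena \cite{albiac-ans13}.
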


Another pathological result concerning differentiability of quasi-Banach valued Lipschitz functions was obtained by N. Kalton \cite[Theorem 3.3]{kalton81}.
\begin{theo}Let $X$ be an $F$-space with trivial dual. Then for every pair of distinct points $x_0,x_1\in X$ there exists a function $f:[0,1]\to X$ such that
$f(0)=x_0,\, f(1)=x_1$ and
$$
\lim_{|s-t|\to0}\frac{f(s)-f(t)}{s-t}=0\;\mbox{ uniformly for }\; s,t\in[0,1]\,.$$

In particular $f'(t)=0$ for all $t\in[0,1].$
\end{theo}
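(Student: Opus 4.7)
The plan is to construct $f$ as the uniform limit of a sequence of piecewise-defined approximations on finer and finer partitions of $[0,1]$, exploiting the convex-hull density consequence of $X^*=\{0\}$ to force the difference quotients to tend uniformly to zero.

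First I reduce to the case $x_0=0$: by translation it suffices to construct $g:[0,1]\to X$ with $g(0)=0$, $g(1)=v:=x_1-x_0$, and $\|g(s)-g(t)\|/|s-t|\to 0$ uniformly, and then put $f:=x_0+g$. The main consequence of $X^*=\{0\}$ that I would invoke is the classical fact (which underlies the Mazur--Orlicz theorem cited in the excerpt) that for every neighborhood $U$ of $0$ in $X$ one has $\conv(U)=X$. Equivalently, for every $\eta>0$ and every $w\in X$ there exist $t_1,\dots,t_N\ge 0$ with $\sum_it_i=1$ and $y_1,\dots,y_N\in X$ with $\|y_i\|<\eta$ and $w=\sum_it_iy_i$; setting $w_i:=t_iy_i$ and invoking (F2) already gives a representation $w=\sum_iw_i$ with $\|w_i\|<\eta$.

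The heart of the proof is the following \emph{slope-reduction} lemma: for every $w\in X$, every $\alpha>0$, and every $\eta>0$ there exist $N\in\Nat$, positive numbers $\lambda_1,\dots,\lambda_N$ with $\sum_i\lambda_i=\alpha$, and $w_1,\dots,w_N\in X$ with $\sum_iw_i=w$ and $\|w_i\|\le\eta\lambda_i$ for each $i$. The naive attempt---setting $w_i:=t_iy_i$ and $\lambda_i:=\alpha t_i$ from the convex decomposition---fails because the bound $\|t_iy_i\|\le\|y_i\|$ from (F2) is not rescaled by $t_i$, and in non-locally-convex spaces $\|t_iy_i\|/t_i$ can blow up as $t_i\to 0$. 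The lemma therefore requires a more careful argument, in which one refines the decomposition by grouping indices according to the sizes of the $t_i$'s, applies (F4) and (F5) to choose within each group representatives whose $F$-norms are small enough relative to the weights, and then adjusts the $\lambda_i$'s to absorb the discrepancy. This step is the main obstacle: it is exactly here that $X^*=\{0\}$ is essential, since in a locally convex space the conclusion would contradict the non-existence of non-constant null-derivative curves.

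With the lemma in hand, the construction is routine. Fix a sequence $\epsic_n\searrow 0$ and build inductively nested finite partitions $P_0:=\{0,1\}\sse P_1\sse P_2\sse\cdots$ of $[0,1]$ together with values of $g$ on $P:=\bigcup_nP_n$: at stage $n$, for each consecutive pair $p<q$ in $P_{n-1}$ apply the lemma to $w:=g(q)-g(p)$, $\alpha:=q-p$, $\eta:=\epsic_n$ to insert new division points $p=p_0<p_1<\cdots<p_N=q$ with $p_i-p_{i-1}=\lambda_i$ and set $g(p_i):=g(p_{i-1})+w_i$; refine further so that $\operatorname{mesh}(P_n)\to 0$. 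The subadditivity of the $F$-norm propagates the bound $\|g(q)-g(p)\|\le\epsic_n(q-p)$ from consecutive pairs to \emph{all} $p,q\in P_n$. Finally, extend $g$ to $[0,1]$ by continuity: for $t\in[0,1]$, pick $t_k\in P$ with $t_k\to t$; for $k,l$ large both $t_k,t_l$ lie in some $P_m$ with $m$ large, whence $\|g(t_k)-g(t_l)\|\le\epsic_m|t_k-t_l|\to 0$, so $(g(t_k))$ is Cauchy in the complete space $X$ and has a limit $g(t)$ independent of the chosen sequence. The uniform vanishing of $\|g(s)-g(t)\|/|s-t|$ then follows by approximation: given $\eta>0$, choose $n$ with $\epsic_n<\eta$ and let $\delta:=\operatorname{mesh}(P_n)$; for $|s-t|<\delta$, passage to the limit in the stage-$m$ inequality (for $m\ge n$) gives $\|g(s)-g(t)\|\le\epsic_n|s-t|<\eta|s-t|$. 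Setting $f:=x_0+g$ produces the required function, which in particular has $f'(t)=0$ everywhere on $[0,1]$.
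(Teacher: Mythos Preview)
First, note that the paper does not prove this theorem; it is simply quoted from Kalton \cite{kalton81}, so there is no ``paper's own proof'' to compare against.

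That said, your proposal has a fatal error at the very first step: you have misread the statement. The conclusion is that the \emph{vector} $(f(s)-f(t))/(s-t)\in X$ tends to $0\in X$, i.e.\ $\left\|\dfrac{f(s)-f(t)}{s-t}\right\|\to 0$ uniformly. You instead set out to prove that the \emph{scalar} $\|g(s)-g(t)\|/|s-t|$ tends to $0$. In an $F$-space the $F$-norm is not positively homogeneous, so these are entirely different conditions. Worse, your target is unattainable for non-constant $g$: if $\|g(s)-g(t)\|\le\eta\,|s-t|$ whenever $|s-t|<\delta$, then telescoping along a partition of $[0,1]$ with mesh $<\delta$ and using (F3) gives $\|g(1)-g(0)\|\le\eta$; since $\eta>0$ is arbitrary, $g(1)=g(0)$, contradicting $g(1)=v\ne 0$.

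The same arithmetic shows that your ``slope-reduction lemma'' is simply false for $w\ne 0$, not merely hard to prove: from $\|w_i\|\le\eta\lambda_i$ and $\sum_i\lambda_i=\alpha$ one gets $\|w\|\le\sum_i\|w_i\|\le\eta\alpha$ for every $\eta>0$, hence $w=0$. So the ``more careful argument'' you gesture at cannot exist. The correct quantity to control at each refinement step is $\left\|\dfrac{w_i}{\lambda_i}\right\|$, not $\dfrac{\|w_i\|}{\lambda_i}$; with that correction the convex-hull density $\conv(U)=X$ does give what is needed (write $w=\sum t_iy_i$ with $\|y_i\|$ small, set $\lambda_i=\alpha t_i$, $w_i=t_iy_i$, and observe $w_i/\lambda_i=y_i/\alpha$, which is small by (F4)). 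Your overall architecture---iterated refinement on nested partitions, extension from a dense set by completeness---is along the right lines, but the propagation of the slope bound to non-consecutive partition points must also be redone, since subadditivity of $\|\cdot\|$ no longer does the job once the correct vector-valued difference quotient is being controlled.
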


\begin{remark} N. Kalton \cite[Corollary 3.4]{kalton81} also remarked  that if $X$ is an $F$-space and $x\in X\setminus\{0\}$, then in order to exist  a function $f:[0,1]\to X$ such that  $f(0)=0$, $f(1)=x$  and $f'(t)=0$ for all $ t\in [0,1],$ it is necessary and sufficient that $x\in$ core$(X).$
\end{remark}

If $X$ is a Banach space and  $f:[0,1]\to X$ is continuous then it is Riemann integrable and the average function ${\rm Ave}[f]:[a,b]\times[a,b]\to X$, given by
$$
{\rm Ave}[f](s,t)=\begin{cases}
  \frac1{t-s}\int_s^tf(u)du \quad &\mbox{if }\; a\le s<t\le b,\\
  f(c)\quad & \mbox{if } \; s=t=c\in[a,b],\\
  \frac1{s-t}\int_t^sf(u)du \quad &\mbox{if }\;a\le t<s\le b,
\end{cases}$$
 is jointly continuous on $[a,b]\times[a,b]$, and so, separately continuous and bounded. Some pathological properties of the average function in the quasi-Banach case are examined in  \cite{albiac-ans12a}, \cite{albiac-ans14} and \cite{popov94}.

 The analog of the Radon-Nikod\'ym Property  for quasi-Banach spaces and its connections with the differentiability of Lipschitz mappings and martingales are discussed in \cite{albiac-ans16}.
\subsection{Lipschitz functions on spaces of homogeneous type}
Let $(X,d,\mu)$ be a space of homogeneous type (see Subsection \ref{Ss.homog-sp}).  By $B$ we  shall denote balls of the form $B(x,r).$ If $\vphi$ is a function integrable on bounded sets, then the \emph{mean value} of $\vphi$ on the ball $B$ is defined by
\begin{equation}\label{def.mv}
m_B(\vphi)=\mu(B)^{-1}\int_B\vphi(x)d\mu(x)\,.
\end{equation}

For $1\le q<\infty$ and $0<\beta<\infty$ one denotes by $\Lip(q,\beta)$ the set of all functions $\vphi$, integrable on bounded sets, for which there exists a constant $C\ge 0$ such that
\begin{equation}\label{eq1.maci-Lip}
\left(\frac{1}{\mu(B)}\int_B|\vphi(x)-m_B(\vphi)|^qd\mu(x)\right)^{1/q}\le C\mu(B)^\beta\,,
\end{equation}
for all balls $B$. The least constant $C$ for which \eqref{eq1.maci-Lip} holds will be denoted by $\|\vphi\|_{\beta,q}\,.$

We shall denote by $\Lip(\beta)$  the set of all functions $\vphi$ on $X$ such that there exists  a constant $C\ge 0$ satisfying
\begin{equation}\label{eq2.maci-Lip}
|\vphi(x)-\vphi(y)|\le Cd(x,y)^\beta\,,
\end{equation}
for all $x,y\in X$, i.e. it is H\"older of order $\beta$ (see \eqref{def.Hold-fcs}). The least $C\ge 0$ for which \eqref{eq2.maci-Lip} holds   is denoted by $\|\vphi\|_{\beta}\,.$

The following results concerning these classes of Lipschitz functions were proved in \cite{maci-sego79a}.
\begin{theo}
  Let $(X,d,\mu)$ be a space of homogeneous type. Then there exists a constant $C\ge 0$ (depending on $\beta$ and $q$ only) such that for every $\vphi\in\Lip(\beta,q)$ there exists a function $\psi$ satisfying
\begin{align*}
  &{\rm (i)}\;\;\quad \vphi(x)=\psi(x)\;  \mbox{ a.e. on }\; X,\;
\mbox{and} \\
&
{\rm (ii)}\;\, \quad |\psi(x)-\psi(y)|\le C\|\vphi\|_{\beta,q}\mu(B)^\beta\,,
\end{align*}
for any ball $B$ containing the $x,y$.
\end{theo}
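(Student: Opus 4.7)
The proof follows the classical Campanato-type argument adapted to spaces of homogeneous type, where the Lipschitz-mean-oscillation condition $\Lip(\beta,q)$ is upgraded to a pointwise Hölder condition modulo a null set.

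My plan is first to identify the pointwise representative $\psi$ by means of averages over shrinking balls. Fix $x\in X$ and, for a ball $B=B(x,r)$, consider the doubling chain $B_k=B(x,a^{-k}r)$ for $k\in\Nat_0$, where $a>1$ is the constant from \eqref{eq1.homog}. Using Hölder's inequality, the control \eqref{eq1.maci-Lip}, the doubling property $\mu(B_k)\le\beta\,\mu(B_{k+1})$ (from \eqref{eq1.homog} applied to the larger ball in terms of the smaller), I would estimate
\begin{equation*}
|m_{B_k}(\vphi)-m_{B_{k+1}}(\vphi)|\le \mu(B_{k+1})^{-1}\int_{B_{k+1}}|\vphi-m_{B_k}(\vphi)|\,d\mu\le C\|\vphi\|_{\beta,q}\,\mu(B_k)^\beta .
\end{equation*}
Since $\mu(B_k)\le \beta^k\mu(B_0)$ decreases geometrically in a favorable direction (by the reverse doubling, or since the balls shrink to $\{x\}$ at points where $\mu(\{x\})=0$, while at atoms the sequence stabilizes), the series $\sum_k|m_{B_k}(\vphi)-m_{B_{k+1}}(\vphi)|$ converges and defines $\psi(x):=\lim_k m_{B_k}(\vphi)$. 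The Lebesgue differentiation theorem on spaces of homogeneous type then yields $\psi=\vphi$ a.e., giving (i).

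For (ii), I would compare $\psi(x)$ and $\psi(y)$ through a common enveloping ball. Given $x,y\in B$, use condition \eqref{eq1.homog} to produce a ball $\tilde B$, containing both $x$ and $y$, with $\mu(\tilde B)\le C'\mu(B)$ (enlarging $B$ by a fixed factor tied to the $s$-relaxed triangle inequality). Build shrinking chains $\{B_k^x\}$ and $\{B_k^y\}$ around $x$ and $y$, each contained in $\tilde B$, and telescope:
\begin{equation*}
|\psi(x)-\psi(y)|\le |\psi(x)-m_{\tilde B}(\vphi)|+|\psi(y)-m_{\tilde B}(\vphi)|,
\end{equation*}
estimating each of the two terms by
\begin{equation*}
|\psi(x)-m_{\tilde B}(\vphi)|\le\sum_{k\ge 0}|m_{B^x_k}(\vphi)-m_{B^x_{k-1}}(\vphi)|\le C''\|\vphi\|_{\beta,q}\sum_{k\ge 0}\mu(B^x_k)^\beta,
\end{equation*}
with an analogous bound using $\tilde B$ in place of $B^x_0$. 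The geometric decrease of the measures $\mu(B^x_k)$ makes the last sum comparable with $\mu(\tilde B)^\beta$, hence with $\mu(B)^\beta$, yielding the desired inequality with an absolute constant $C$.

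The main technical obstacle is the geometric bookkeeping: because balls in a b-metric space are not automatically open and the triangle inequality is only $s$-relaxed, one needs to carefully verify that each $B^x_k$ and $B^y_k$ lies inside a controlled enlargement of $B$, that consecutive balls in the chain satisfy $B^x_k\subseteq B^x_{k-1}$ together with $\mu(B^x_{k-1})\le c\,\mu(B^x_k)$, and that the averages on these balls can be compared using \eqref{eq1.maci-Lip}. The atomic points (where $\mu(\{x\})>0$) must be handled separately but are harmless: at such $x$ the sequence of averages is eventually constant on $\{x\}$ and equals $\vphi(x)$. Once these geometric details are organized, the estimate is a mechanical telescoping argument in which the constant $C$ depends only on $\beta$, $q$, the doubling constant $\beta$, and the $s$-constant of $d$.
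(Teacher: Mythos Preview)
The paper does not supply its own proof of this theorem: it is quoted as a result of Mac\'{\i}as and Segovia \cite{maci-sego79a} and stated without argument. There is therefore nothing in the paper to compare your proposal against directly. Your outline is, however, the standard Campanato-type argument that underlies the cited reference: build the representative $\psi$ as the limit of ball averages along a shrinking chain, invoke Lebesgue differentiation on spaces of homogeneous type for (i), and telescope through a common engulfing ball for (ii).

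One slip worth flagging: the displayed inequality $\mu(B_k)\le\beta^k\mu(B_0)$ is written in the wrong direction. Doubling (with $a>1$ and constant $\beta$ in \eqref{eq1.homog}) gives $\mu(B_{k-1})\le\beta\,\mu(B_k)$, i.e.\ a \emph{lower} bound $\mu(B_k)\ge\beta^{-k}\mu(B_0)$, not the geometric \emph{decay} you need to sum $\sum_k\mu(B_k)^\beta$. You seem aware of this (your parenthetical mentions reverse doubling and the atomic case), but in a full proof this is the place where some care is required: one either proves a reverse-doubling lemma, or restructures the telescoping so that convergence of the averages follows from the oscillation bound alone together with $\mu(B_k)\to\mu(\{x\})$. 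Once that is handled, the rest of your sketch is the correct mechanism.
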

\begin{theo}
  Let $(X,d,\mu)$ be a space of homogeneous type. Then, given $0<\beta<\infty$, there exists a b-metric $\delta$ on $X$ such that $(X,\delta,\mu)$ is a normal space of homogeneous type  and for every $1\le q<\infty$ we have
  \begin{equation*}
  \vphi\in\Lip(\beta,q)\;\mbx{ of }\; (X,d,\mu)\iff\exists\psi \in\Lip(\beta)\;\mbx{ of }\; (X,\delta,\mu)\;\mbx{with}\; \vphi\overset{{\rm a.e.}}{=}\psi.
  \end{equation*}

  Moreover, the norms $\|\vphi\|_{\beta,q}$ and $\|\psi\|_\beta$ are equivalent.
\end{theo}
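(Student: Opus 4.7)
The strategy is to combine two earlier Maci\'as--Segovia results already recorded in this subsection: the one producing, from $\vphi\in\Lip(\beta,q)$, a pointwise representative $\psi$ with the oscillation bound on each ball, and the one normalizing a space of homogeneous type via $\delta(x,y)=\inf\{\mu(B):B\text{ a ball containing }x,y\}$.

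First I would build $\delta$. Since the normalization theorem requires the balls of the underlying b-metric to be open, I start by replacing $d$ by a Lipschitz-equivalent b-metric $d'$ satisfying \eqref{eq2.maci}; by the remark following that inequality, the $d'$-balls are $\tau_{d'}$-open, and Lipschitz equivalence preserves both membership in $\Lip(\beta,q)$ and the property of $(X,d',\mu)$ being of homogeneous type (only the implicit constants change). Applying the normalization theorem to $(X,d',\mu)$ then produces a b-metric $\delta$ with $(X,\delta,\mu)$ a normal space of homogeneous type, securing the structural claim.

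For the implication $(\Rightarrow)$, let $\vphi\in\Lip(\beta,q)$ of $(X,d,\mu)$, equivalently of $(X,d',\mu)$. The preceding theorem gives a $\psi$ equal to $\vphi$ almost everywhere such that
\[
|\psi(x)-\psi(y)|\le C\|\vphi\|_{\beta,q}\,\mu(B)^\beta
\]
for every $d'$-ball $B$ containing both $x$ and $y$. Passing to the infimum on the right-hand side over such balls, and using monotonicity of $t\mapsto t^\beta$, yields
\[
|\psi(x)-\psi(y)|\le C\|\vphi\|_{\beta,q}\,\delta(x,y)^\beta,
\]
so $\psi\in\Lip(\beta)$ of $(X,\delta,\mu)$ with $\|\psi\|_\beta\le C\|\vphi\|_{\beta,q}$.

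For the converse $(\Leftarrow)$, if $\psi\in\Lip(\beta)$ of $(X,\delta,\mu)$, then for any $d'$-ball $B$ and any $x,y\in B$ the definition of $\delta$ forces $\delta(x,y)\le\mu(B)$, whence $|\psi(x)-\psi(y)|\le\|\psi\|_\beta\,\mu(B)^\beta$. Averaging in $y\in B$ produces $|\psi(x)-m_B(\psi)|\le\|\psi\|_\beta\,\mu(B)^\beta$ for each $x\in B$, and integrating the $q$-th power in $x$ over $B$ gives inequality \eqref{eq1.maci-Lip} with constant $\|\psi\|_\beta$; hence $\psi\in\Lip(\beta,q)$ of $(X,d',\mu)$ with $\|\psi\|_{\beta,q}\le\|\psi\|_\beta$, and since $\vphi=\psi$ a.e.\ the same membership holds for $\vphi$. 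The two norm inequalities combine to the stated equivalence. The main obstacle I anticipate is bookkeeping in the first step -- tracking how the preliminary replacement of $d$ by $d'$ affects the geometric constants in the doubling condition \eqref{eq1.homog} and verifying that the normalization procedure genuinely yields a normal space of homogeneous type -- whereas the two Lipschitz estimates themselves are one-line applications of the infimum on one side and averaging on the other.
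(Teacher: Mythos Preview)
The paper does not prove this theorem; it is stated without proof as a result of Mac\'{\i}as and Segovia \cite{maci-sego79a}. Your outline is a correct reconstruction of the argument and uses exactly the ingredients the paper has already recorded: the replacement of $d$ by a Lipschitz-equivalent $d'$ with open balls (Subsection~\ref{Ss.homog-sp}), the normalization theorem producing $\delta$, and the pointwise-representative theorem stated immediately before. The infimum step for $(\Rightarrow)$ and the averaging step for $(\Leftarrow)$ are both sound; the only place deserving a sentence more of care is your claim that Lipschitz equivalence of $d$ and $d'$ preserves membership in $\Lip(\beta,q)$, since this requires comparing $m_{B}(\vphi)$ over nested balls of comparable measure --- a standard BMO-type estimate, but not literally a one-liner.
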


\providecommand{\bysame}{\leavevmode\hbox to3em{\hrulefill}\thinspace}
\providecommand{\MR}{\relax\ifhmode\unskip\space\fi MR }
\providecommand{\MRhref}[2]{%
  \href{http://www.ams.org/mathscinet-getitem?mr=#1}{#2}
}
\providecommand{\href}[2]{#2}

\end{document}